\definecolor{gray1}{gray}{0.9}
\definecolor{gray2}{gray}{0.8}
\definecolor{gray3}{gray}{0.7}
\newtheorem{theo}{Theorem}[section]
\newtheorem{lemm}[theo]{Lemma}
\newtheorem{corol}[theo]{Corollary}
\theoremstyle{definition}
\theoremstyle{remark}
\theoremstyle{remark}
\numberwithin{equation}{section}
\def\PLR{\mathrm{PLR}}
\def\PLS{\mathrm{PLS}}
\def\smallgraphnodedistance{0.25}
\begin{document}

\title{\bf \Large Enumerating partial Latin rectangles}

\author{
Ra\'{u}l M.\ Falc\'{o}n\\
\small  School of Building Engineering, University of Seville, Spain.\\ \small \url{rafalgan@us.es}\\ \\
Rebecca J.\ Stones\\ \small College of Computer Science, Nankai University, Tianjin, China.\\
\small School of Mathematical Sciences and Faculty of Information Technology, Monash University, Australia.\\
\small Department of Mathematics and Statistics, Dalhousie University, Halifax, Canada.\\
\small \url{rebecca.stones82@gmail.com}
}
\date{\today}

\maketitle

\begin{abstract}
This paper deals with distinct computational methods to enumerate the set $\PLR(r,s,n;m)$ of $r \times s$ partial Latin rectangles on $n$ symbols with $m$ non-empty cells. For fixed $r$, $s$, and $n$, we prove that the size of this set is a symmetric polynomial of degree $3m$, and we determine the leading terms (the monomials of degree $3m$ through $3m-9$) using inclusion-exclusion. For $m \leq 13$, exact formulas for these symmetric polynomials are determined using a chromatic polynomial method. Adapting Sade's method for enumerating Latin squares, we compute the exact size of $\PLR(r,s,n;m)$, for all $r \leq s \leq n \leq 7$, and all $r \leq s \leq 6$ when $n=8$.  Using an algebraic geometry method together with Burnside's Lemma, we enumerate isomorphism, isotopism, and main classes when $r \leq s \leq n \leq 6$.  Numerical results have been cross-checked where possible.
\end{abstract}

{\bf Keywords}: Partial Latin rectangle, \, isomorphism,\, isotopism,\,  main class,\, inclusion-exclusion\, chromatic polynomial\, algebraic geometry.

{\bf MSC[2010]}: 05B15.

\section{Introduction}\label{sec:intro}

Let $[n]:=\{1,2,\ldots,n\}$. An $r \times s$ \emph{partial Latin rectangle} $L=(l_{ij})$ on the symbol set $[n] \cup \{\cdot\}$ is an $r \times s$ matrix such that each row and each column has at most one copy of any symbol in $[n]$.  Here, $r$, $s$, and $n$ are arbitrary positive integers, and we admit the possibility that $n < \min\{r, s\}$. If $r=s=n$, then this constitutes a {\em partial Latin square} of order $n$.  The cells containing the symbol $\cdot$ are considered \emph{empty}, and we say that $l_{ij}$ is \emph{undefined}. An \emph{entry} of $L$ is any triple $(i,j,l_{ij})\in [r]\times [s]\times [n]$. The set of all entries of $L$ is called its {\em entry set}, which is denoted $E(L)$. The \emph{weight} of $L$ is its number of non-empty cells, that is, the size of its entry set. Let $\PLR(r,s,n;m)$ denote the set of $r \times s$ partial Latin rectangles on the symbol set $[n] \cup \{\cdot\}$ of weight $m$ and let $\PLR(r,s,n)=\cup_{0\leq m \leq rs} \PLR(r,s,n;m)$. Let $\PLS(n;m)=\PLR(n,n,n;m)$ be the set of partial Latin squares of weight $m$. For $m=n^2$, this is the set of {\em Latin squares of order $n$}.

For each positive integer $t\in\mathbb{Z}^+$, let $S_t$ denote the symmetric group on the set $[t]$.
\begin{itemize}
 \item The {\em isotopism group} $\mathfrak{I}_{r,s,n}:=S_r\times S_s\times S_n$ acts on the set $\PLR(r,s,n;m)$, with the \emph{isotopism} $\Theta=(\alpha,\beta,\gamma)$ permuting the rows according to $\alpha$, the columns according to $\beta$, and the symbols according to $\gamma$. This gives the \emph{isotopic} partial Latin rectangle $L^{\Theta}\in \PLR(r,s,n;m)$, whose entry set is $E(L^{\Theta})=\left\{(\alpha(i),\beta(j),\gamma(l_{i,j}))\colon (i,j,l_{ij})\in E(L)\right\}$.
\item The symmetric group $S_n$ is isomorphic to the subgroup $\{(\alpha,\alpha,\alpha):\alpha \in S_n\}$ of the isotopism group $\mathfrak{I}_{n,n,n}$ via the isomorphism $\alpha \mapsto (\alpha,\alpha,\alpha)$. In this regard, the \emph{isomorphism group} $S_n$ acts on the set $\PLS(n;m)$, with $\alpha \in S_n$ mapping $L$ to $L^{(\alpha,\alpha,\alpha)}$.
\item Let $\pi\in S_3$ and $L \in \PLR(d_1,d_2,d_3;m)$. The \emph{parastrophic} partial Latin rectangle $L^{\pi} \in \PLR(d_{\pi(1)},d_{\pi(2)},d_{\pi(3)};m)$ is defined so that its entry set is $E(L^{\pi})=\{(p_{\pi(1)},p_{\pi(2)},p_{\pi(3)})\colon$ $(p_1,p_2,p_3)\in E(L)\}$.  The permutation $\pi$ is said to be a {\em parastrophism}. Since parastrophisms may not preserve the dimensions of partial Latin rectangles, the \emph{parastrophism group} $S_{r,s,n}$ is defined as the stabilizer of the ordered triple $(r,s,n)$ under the action $(d_1,d_2,d_3) \xmapsto{\pi} (d_{\pi(1)},d_{\pi(2)},d_{\pi(3)})$ by $S_3$.
\item The {\em paratopism group} $\mathfrak{P}_{r,s,n}:=\mathfrak{I}_{r,s,n}\rtimes S_{r,s,n}$ acts on the set $\PLR(r,s,n;m)$ so that each {\em paratopism} $(\Theta,\pi)$ maps $L$ to the {\em paratopic} partial Latin rectangle $L^{(\Theta,\pi)}=(L^{\pi})^{\Theta}$. When $\pi=\mathrm{Id}$ (i.e., the trivial permutation in $S_3$) the isotopism group arises as a normal subgroup of the paratopism group.
\end{itemize}

Orbits of $\PLR(r,s,n;m)$ under the isotopism, isomorphism, and paratopism groups are equivalence classes, called \emph{isotopism}, \emph{isomorphism}, and \emph{main} classes, respectively. The stabilizer subgroups under these groups are called {\em autotopism}, {\em automorphism}, and \emph{autoparatopism} groups, respectively. Let $\PLR((\Theta,\pi))$ and $\PLR((\Theta,\pi);m)$ denote, respectively, the subsets of partial Latin rectangles in the sets $\PLR(r,s,n)$ and $\PLR(r,s,n;m)$ that admit an autoparatopism $(\Theta,\pi) \in \mathfrak{P}_{r,s,n}$.

The goal of this paper is to find methods for computing the size of $\PLR(r,s,n;m)$, along with its equivalence class sizes. It is unrealistic to expect a succinct solution to both problems for arbitrary $r$, $s$, $n$, and $m$, since they include in particular the number of Latin squares of given order $n$, which is a long-standing research problem in combinatorics. This is known only for order $n\leq 11$ \cite{HulpkeKaskiOstergard2011,McKayWanless2005}; see \cite{Stones2009b,Stones2009,StonesLinLiuWang2016} for some related results on Latin rectangles. Currently, the number of partial Latin rectangles is known only for $r,s,n\leq 6$ \cite{Falcon2013, Falcon2015, Falcon2018}. To make progress, we need to restrict these parameters in some way. In particular, we enumerate (a) fixed-weight partial Latin rectangles, (b) partial Latin rectangles for small $m$, and (c) partial Latin rectangles for small $r$, $s$, and $n$.

The number of isotopism, isomorphism, and main classes of Latin squares has been determined \cite{HulpkeKaskiOstergard2011, McKayMeynertMyrvold2007} for order $n\leq 11$, whereas for partial Latin rectangles, these numbers were computed \cite{FalconStones2015} for $r,s,n \leq 6$. Adams, Bean, and Khodkar \cite{AdamsBeanKhodkar2003} enumerated main classes of partial Latin squares of order $n\leq 6$ that constitute critical sets. More recently, it has been obtained  \cite{DietrichWanless2018,WanlessWebb2017} the number of main classes of partial Latin rectangles with at most $12$ entries.

Autoparatopisms and asymmetry for partial Latin squares were studied in \cite{FalconStones,Stones2013} and several constructions of partial Latin rectangles with trivial autotopism groups for various autoparatopism groups was given in \cite{FalconStones2017}.  Computational methods to determine autotopism groups of partial Latin rectangles were compared in \cite{FalconKotlarStones2,FalconKotlarStones}.  For Latin squares of order $n\leq 17$, identifying when $\#\PLR((\Theta,\pi)) \neq 0$ (throughout this paper $\#$ denotes the cardinality of a set) was done for isotopisms in \cite{StonesVojtechovskyWanless2012} and paratopisms in \cite{MendisWanless}, with prior work in \cite{Falcon2009,FalconMartinMorales2007}.

Symmetries of Latin squares and rectangles have been studied in a wide range of contexts, e.g., enumeration \cite{MengZhengZheng2008,StonesWanless2009a,StonesWanless2009b}, subsquares \cite{Browning2013,Mendis2016}, the Alon-Tarsi Conjecture \cite{Drisko1997b,StonesWanless2009d}, quasigroups and loops \cite{Artzy1954,KerbySmith2009, KerbySmith2010,McKayWanlessZhang2015}, special kinds of symmetries \cite{CavenaghStones2010a,FalconNunez2007,IhrigIhrig2008,WanlessIhrig2005}, and in their own right \cite{Bailey1982,BryantBuchananWanless2009,Falcon2007,Falcon2008}.  They are beginning to find applications in secret sharing schemes \cite{Ganfornina2006, StonesSuLiuWangLin,KongEtAl}, erasure codes \cite{YiEtAl2019}, and graph coloring games \cite{FalconAndres2018a,FalconAndres2018b}.

The remainder of the paper is organized as follows. The three following sections deal with distinct combinatorial methods that enable us to determine the size of the set $\PLR(r,s,n;m)$: (a) Section~\ref{sec:inc-exc}: an inclusion-exclusion method that demonstrates $\#\PLR(r,s,n;m)$ for fixed $m$ is a symmetric polynomial of degree $3m$; (b) Section~\ref{sec:chrom_poly}: a chromatic polynomial method that gives exact formulas for this symmetric polynomial, which we compute for $m\leq 13$; and (c) Section~\ref{sec:Sade}: an adaptation of Sade's method (which efficiently enumerates Latin squares) to partial Latin rectangles, which enables us to determine explicitly the number $\#\PLR(r,s,n;m)$ for all $r\leq s\leq n\leq 7$, and all $r\leq s\leq 6$ when $n=8$. Section~\ref{sec:alg_geo} describes an algebraic method for computing $\#\PLR((\Theta,\pi);m)$ and also the number of isotopisms between two given partial Latin rectangles. In Section~\ref{sec:equiv} we use the Orbit-Stabilizer Theorem and Burnside's Lemma to compute the size of isomorphisms, isotopism and main classes. Section~\ref{sec:comp_results} describes the computational results and the implementations of the various methods. In Section~\ref{sec:verify} we comment how these computational results have been cross-checked in order to ensure their accuracy. A glossary of the most common symbols that are used throughout the paper is shown in Appendix~\ref{sec:symbols}. To improve the readability of the paper, tables are in Appendix~\ref{sec:tables}.

\section{Inclusion-exclusion method}\label{sec:inc-exc}

For fixed $m \geq 1$, we find formulas for the size of $\PLR(r,s,n;m)$ by modifying the method for enumerating partial orthomorphisms of finite cyclic groups given in \cite{StonesWanless2009b}.  At first, this is a surprising claim, as partial Latin rectangles and partial orthomorphisms are largely unrelated (unless we impose some symmetry, which we don't in the context of this section). The similarity between these two types of objects is that both partial Latin rectangles of weight $m$ and partial orthomorphisms with domain size $m$ are equivalent to non-clashing $m$-sets of ordered triples (the difference is what constitutes a ``clash'').

\subsection{Generalized ordered partial Latin rectangles}

Let $\mathcal{S}_m=\mathcal{S}(r,s,n;m)$ be the set of sequences $\mathbf{e}=(e_i)_{i=1}^m$, where each $e_i=(e_i[1],e_i[2],$ $e_i[3])$ is a $3$-tuple in $[r] \times [s] \times [n]$.  From any $\mathbf{e} \in \mathcal{S}_m$, we construct an $r \times s$ matrix $M=M(\mathbf{e})$ as follows:
\begin{itemize}
 \item We begin with each cell in $M$ containing the empty multiset $\emptyset$.
 \item For $i \in [m]$, we add symbol $e_i[3]$ in the multiset in cell $(e_i[1],e_i[2])$.
\end{itemize}
For example, if $r=s=n=m=3$ and $\mathbf{e}=\big((1,1,1),(1,2,3),(1,1,1)\big)$, then
\[M(\mathbf{e})=\begin{array}{|ccc|}
\hline
\{1,1\} & \{3\} & \emptyset \\
\emptyset & \emptyset & \emptyset \\
\emptyset & \emptyset & \emptyset \\
\hline
\end{array}.\]
If it turns out that every non-empty multiset in $M$ has cardinality $1$ and there are no repeated elements in any row or column of $M$, then $M$ is essentially a partial Latin rectangle (formally, we need to map $\emptyset \mapsto \cdot$ and $\{i\} \mapsto i$).  For example, if $r=s=n=m=3$ and $\mathbf{e}=\big((1,1,1),(1,2,3),(2,2,2)\big)$, then

\[M(\mathbf{e})=\begin{array}{|ccc|}
\hline
\{1\} & \{3\} & \emptyset \\
\emptyset & \{2\} & \emptyset \\
\emptyset & \emptyset & \emptyset \\
\hline
\end{array}
\longleftrightarrow
\begin{array}{|ccc|}
\hline
1 & 3 & \cdot \\
\cdot & 2 & \cdot \\
\cdot & \cdot & \cdot \\
\hline
\end{array}.\]
Thus, sequences in $\mathcal{S}_m$ are generalized partial Latin rectangles consisting of $m$ ordered entries.

Let $\mathcal{A}_m$ be the subset of $\mathcal{S}_m$ that gives rise to partial Latin rectangles.  Hence, \[|\mathcal{A}_m|=m!\,\#\PLR(r,s,n;m)\] because we can order the entries in a partial Latin rectangle in $m!$ ways.  For fixed $m$, we define
\[f_m(r,s,n):=|\mathcal{A}_m|.\]
To find a formula for $f_m(r,s,n)$ by using inclusion-exclusion on the number of ``clashes'', let
\[C_m:=\{[i,j,k] \colon 1 \leq i < j \leq m \text{ and } k\in \{1,2,3\}\},\]
which we use to index the possible clashes in $\mathbf{e} \in \mathcal{S}_m$ as follows:
\begin{itemize}[leftmargin=1.2in]
 \item[Clash {$[i,j,1]$}:] When $e_i[2]=e_j[2]$ and $e_i[3]=e_j[3]$.  This would result in two copies of the same symbol in the same column in $M$ (not necessarily in distinct cells).
 \item[Clash {$[i,j,2]$}:] When $e_i[1]=e_j[1]$ and $e_i[3]=e_j[3]$.  This would result in two copies of the same symbol in the same row in $M$ (not necessarily in distinct cells).
 \item[Clash {$[i,j,3]$}:] When $e_i[1]=e_j[1]$ and $e_i[2]=e_j[2]$.  This would result in two (not necessarily distinct) symbols in the same cell in $M$.
\end{itemize}
Any $\mathbf{e} \in \mathcal{S}_m$ therefore has a corresponding set of clashes $C_{\mathbf{e}} \subseteq C_m$. For any $U \subseteq C_m$, define
\[\mathcal{B}_U:=\{\mathbf{e} \in \mathcal{S}_m \colon U \subseteq C_{\mathbf{e}}\},\]
i.e., the sequences in $\mathcal{S}_m$ that have the clashes in $U$ (and possibly more clashes), and
\[\mathcal{D}_U:=\{\mathbf{e} \in \mathcal{S}_m \colon U = C_{\mathbf{e}}\},\]
i.e., the sequences in $\mathcal{S}_m$ that have precisely those clashes in $U$ (and no more clashes).  By definition, \[\mathcal{D}_U=\mathcal{B}_U \setminus \bigcup_{\substack{V \subseteq C_m \\ V \supsetneq U}} \mathcal{B}_V.\]
Hence, by inclusion-exclusion,
\begin{align*}
|\mathcal{D}_U| &= |\mathcal{B}_U|-\Biggl|\bigcup_{\substack{V \subseteq C_m \\ V \supsetneq U}} \mathcal{B}_V \Biggr| \\
      &= |\mathcal{B}_U|+\sum_{\substack{V \subseteq C_m \\ V \supsetneq U}} (-1)^{|V|-|U|} |\mathcal{B}_V| \\
      &= \sum_{\substack{V \subseteq C_m \\ V \supseteq U}} (-1)^{|V|-|U|} |\mathcal{B}_V|.
\end{align*}
When $U=\emptyset$, we have $|\mathcal{D}_U|=|\mathcal{A}_m|$ and consequently the following lemma.

\begin{lemm}\label{lm:PLRIncExc}
For all $m,r,s,n \geq 1$, we have
$$f_m(r,s,n)=\sum_{V \subseteq C_m} (-1)^{|V|} |\mathcal{B}_V|.$$
\end{lemm}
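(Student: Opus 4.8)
The plan is to read off the lemma as the $U=\emptyset$ instance of the inclusion-exclusion identity already derived for $|\mathcal{D}_U|$, the only genuinely new ingredient being the identification of the clash-free sequences with the partial Latin rectangles.

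First I would establish that $\mathcal{A}_m=\mathcal{D}_\emptyset$. Since $\mathcal{D}_\emptyset=\{\mathbf{e}\in\mathcal{S}_m\colon C_{\mathbf{e}}=\emptyset\}$ consists of the sequences with no clash of any type, I would match the three clash types to the three ways $M(\mathbf{e})$ can fail to be a partial Latin rectangle: the absence of every type-$3$ clash $[i,j,3]$ is equivalent to each cell holding a multiset of cardinality at most $1$, while the absence of every type-$1$ clash $[i,j,1]$ and every type-$2$ clash $[i,j,2]$ is equivalent to no symbol being repeated in any column or row, respectively. Hence $C_{\mathbf{e}}=\emptyset$ holds exactly when $M(\mathbf{e})$ is a partial Latin rectangle, so that $\mathcal{D}_\emptyset=\mathcal{A}_m$ and $|\mathcal{D}_\emptyset|=|\mathcal{A}_m|=f_m(r,s,n)$.

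Then I would specialize the formula $|\mathcal{D}_U|=\sum_{V\subseteq C_m,\,V\supseteq U}(-1)^{|V|-|U|}|\mathcal{B}_V|$ to $U=\emptyset$: the sign becomes $(-1)^{|V|}$ and the constraint $V\supseteq\emptyset$ ranges over all $V\subseteq C_m$, giving $f_m(r,s,n)=\sum_{V\subseteq C_m}(-1)^{|V|}|\mathcal{B}_V|$. Should I wish to avoid quoting the union-based inclusion-exclusion, I would instead observe that $\mathcal{B}_U=\bigsqcup_{V\supseteq U}\mathcal{D}_V$ is a disjoint union, since each $\mathbf{e}\in\mathcal{B}_U$ lies in the unique block indexed by $V=C_{\mathbf{e}}$, and then apply M\"obius inversion on the Boolean lattice $(2^{C_m},\subseteq)$ to recover the $|\mathcal{D}_U|$ identity from scratch.

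The one step demanding care is the identification $\mathcal{A}_m=\mathcal{D}_\emptyset$: I must confirm that the three clash types are jointly exhaustive of the failure modes, and that the ``not necessarily distinct cells'' caveat in the clash definitions introduces no gap---for instance, two coinciding triples trigger all three clashes at once, correctly placing such a sequence outside $\mathcal{A}_m$. The inclusion-exclusion itself is routine, being taken over the full Boolean lattice, so the lemma follows immediately once the $U=\emptyset$ case is extracted.
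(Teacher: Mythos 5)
Your proposal is correct and follows essentially the same route as the paper: the paper likewise derives $|\mathcal{D}_U|=\sum_{V\supseteq U}(-1)^{|V|-|U|}|\mathcal{B}_V|$ by inclusion-exclusion and obtains the lemma by setting $U=\emptyset$, where $|\mathcal{D}_\emptyset|=|\mathcal{A}_m|=f_m(r,s,n)$. Your extra care in verifying $\mathcal{A}_m=\mathcal{D}_\emptyset$ (including the coinciding-triples case) and your remark that the identity is M\"obius inversion on the Boolean lattice are just more explicit packagings of what the paper treats as immediate.
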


\subsection{Graph colorings}

Our next goal is to find an equation for $|\mathcal{B}_V|$ in terms of the number of vertex colorings of an edge-colored graph, satisfying some additional constraints (neither vertex colorings nor edge colorings are required to be proper in the ordinary sense).  Given $V \subseteq C_m$, we define a graph $G=G(V)$ with an edge coloring $\delta=\delta(V)$ by the following process.  We start with the null graph on the vertex set $[m]$, and for each $[i,j,k] \in V$:
\begin{itemize}
  \item[I:] If $k=1$, then add a red edge between $i$ and $j$.
  \item[II:] If $k=2$, then add a blue edge between $i$ and $j$.
  \item[III:] If $k=3$, then add a green edge between $i$ and $j$.
  \item[IV:] Replace any parallel edges resulting from I--III with a single black edge.
\end{itemize}
We denote the graph together with its edge coloring generated from $V$ by $(G,\delta)_V$.  An example of an edge-colored graph generated in this way is given in Figure~\ref{FIGraphExample}.

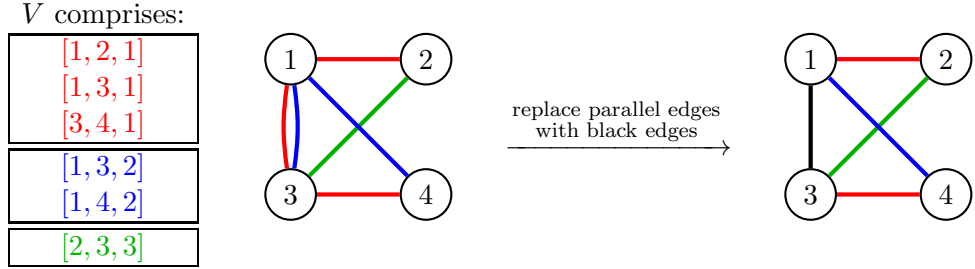
\begin{figure}[ht]
\centering
$
\begin{array}{ccccc}
\begin{array}{|c|}
\multicolumn{1}{c}{V \text{ comprises:}} \\
\hline
\color{red} [1,2,1] \\
\color{red} [1,3,1] \\
\color{red} [3,4,1] \\
\hline
\hline
\color{blue} [1,3,2] \\
\color{blue} [1,4,2] \\
\hline
\hline
\color{green!70!black} [2,3,3] \\
\hline
\end{array}
&
\qquad
&
\begin{array}{c}
\begin{tikzpicture}[scale=0.6]
\tikzstyle{every node}=[draw,thick,circle,minimum size=8pt];
\draw (0,0) node (a) {1};
\draw (3,0) node (b) {2};
\draw (0,-3) node (c) {3};
\draw (3,-3) node (d) {4};

\draw[ultra thick,color=red] (a) to  (b);
\draw[ultra thick,color=green!70!black] (b) to (c);
\draw[ultra thick,color=red] (c) to (d);
\draw[ultra thick,color=blue] (a) to (d);
\draw[ultra thick,color=red] (c) to[bend left=8] (a);
\draw[ultra thick,color=blue] (c) to[bend right=8] (a);
\end{tikzpicture}
\end{array}
&
\begin{array}{c}
\xrightarrow{\substack{\text{replace parallel edges}\\\text{with black edges}}}
\end{array}
&
\begin{array}{c}
\begin{tikzpicture}[scale=0.6]
\tikzstyle{every node}=[draw,thick,circle,minimum size=8pt];
\draw (0,0) node (a) {1};
\draw (3,0) node (b) {2};
\draw (0,-3) node (c) {3};
\draw (3,-3) node (d) {4};

\draw[ultra thick,color=red] (a) to  (b);
\draw[ultra thick,color=green!70!black] (b) to (c);
\draw[ultra thick,color=red] (c) to (d);
\draw[ultra thick,color=blue] (a) to (d);
\draw[ultra thick] (c) to (a);
\end{tikzpicture}
\end{array}
\end{array}
$
\caption{An example of the graph $G=G(V)$, and its edge coloring $\delta=\delta(V)$ (right), for the set of clashes $V \subseteq C_4$.}\label{FIGraphExample}
\end{figure}

Sequences $\mathbf{e} \in \mathcal{B}_V$ are equivalent to a special type of vertex coloring $\phi$ of $(G,\delta)_V$, for which we assign to vertex $i \in [m]$ the color
$$\big(\phi_1(i),\phi_2(i),\phi_3(i)\big):=\big(e_i[1],e_i[2],e_i[3]\big) \in [r] \times [s] \times [n].$$
This coloring satisfies the properties:
\begin{itemize}
 \item If there is a red edge between vertices $i$ and $j$, then $\phi_2(i)=\phi_2(j)$ and $\phi_3(i)=\phi_3(j)$.
 \item If there is a blue edge between vertices $i$ and $j$, then $\phi_1(i)=\phi_1(j)$ and $\phi_3(i)=\phi_3(j)$.
 \item If there is a green edge between vertices $i$ and $j$, then
     $\phi_1(i)=\phi_1(j)$ and $\phi_2(i)=\phi_2(j)$.
 \item If there is a black edge between vertices $i$ and $j$, then $\phi_1(i)=\phi_1(j)$, $\phi_2(i)=\phi_2(j)$ and $\phi_3(i)=\phi_3(j)$.
\end{itemize}
We call such a vertex coloring of $(G,\delta)_V$ \emph{suitable}.  Conversely, any suitable vertex coloring of $(G,\delta)_V$ with the vertex color set $[r] \times [s] \times [n]$ that satisfies the above four properties is a member of $\mathcal{B}_V$, thus giving the following lemma.

\begin{lemm}
For all $V \subseteq C_m$, the set $\mathcal{B}_V$ is the set of suitable vertex colorings of $(G,\delta)_V$, hence $|\mathcal{B}_V|$ is the number of suitable proper vertex colorings of $(G,\delta)_V$.
\end{lemm}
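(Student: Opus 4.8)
The plan is to exhibit an explicit bijection between $\mathcal{B}_V$ and the set of suitable vertex colorings of $(G,\delta)_V$, and then verify it is well-defined in both directions. I would define the map sending a sequence $\mathbf{e}=(e_i)_{i=1}^m \in \mathcal{S}_m$ to the coloring $\phi$ of $[m]$ given by $\phi(i)=(e_i[1],e_i[2],e_i[3]) \in [r]\times[s]\times[n]$. This is visibly a bijection from $\mathcal{S}_m$ onto the set of all functions $[m]\to[r]\times[s]\times[n]$, its inverse reading off the three coordinates of each vertex color as the three coordinates of $e_i$. Hence the content of the lemma is that this map restricts to a bijection between $\mathcal{B}_V$ and the suitable colorings, and it suffices to show, for fixed $\mathbf{e}$ with image $\phi$, the equivalence $\mathbf{e}\in\mathcal{B}_V \iff \phi$ is suitable; equivalently, $V\subseteq C_{\mathbf{e}}$ if and only if $\phi$ satisfies the four edge conditions.

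First I would handle the monochromatic (non-black) edges. Suppose $[i,j,1]\in V$, which by construction produces a red edge between $i$ and $j$ unless it is merged into a black edge (treated below). By definition, $[i,j,1]\in C_{\mathbf{e}}$ exactly when $e_i[2]=e_j[2]$ and $e_i[3]=e_j[3]$, that is, $\phi_2(i)=\phi_2(j)$ and $\phi_3(i)=\phi_3(j)$, which is precisely the red-edge suitability condition. The blue ($k=2$) and green ($k=3$) cases are identical after permuting the roles of the three coordinates, so each single-colored edge encodes exactly its corresponding pair of coordinate equalities, and the requirement that these clashes lie in $C_{\mathbf{e}}$ is equivalent to $\phi$ meeting the corresponding conditions.

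The step I expect to require the most care is the black-edge case arising from step IV, where parallel edges between $i$ and $j$ are merged into a single black edge. Such an edge appears exactly when $V$ contains at least two of $[i,j,1],[i,j,2],[i,j,3]$, and the point is that any two of these clash conditions already force all three coordinate equalities: for instance $[i,j,1],[i,j,2]\in C_{\mathbf{e}}$ yields $e_i[2]=e_j[2]$, $e_i[3]=e_j[3]$, and $e_i[1]=e_j[1]$, i.e. $\phi_1(i)=\phi_1(j)$, $\phi_2(i)=\phi_2(j)$, $\phi_3(i)=\phi_3(j)$, which is exactly the black-edge suitability condition; the remaining pairs are symmetric. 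Thus requiring $V\subseteq C_{\mathbf{e}}$ for the clashes merged into a black edge is equivalent to imposing all three coordinate equalities on $\phi$, and conversely the black-edge condition forces each of the underlying pairwise clashes in $V$ to belong to $C_{\mathbf{e}}$. Combining the monochromatic and black cases over all edges of $(G,\delta)_V$ gives $\mathbf{e}\in\mathcal{B}_V \iff \phi$ suitable, completing the bijection. The cardinality claim $|\mathcal{B}_V|=\#\{\text{suitable colorings of }(G,\delta)_V\}$ is then immediate, since a bijection preserves cardinality.
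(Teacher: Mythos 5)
Your proposal is correct and follows essentially the same route as the paper, which establishes the lemma via the identical correspondence $\phi(i)=(e_i[1],e_i[2],e_i[3])$ in the discussion immediately preceding the statement (the paper offers no separate formal proof). Your explicit verification of the black-edge case --- that any two of the three clashes $[i,j,1],[i,j,2],[i,j,3]$ already force all three coordinate equalities --- is a detail the paper leaves implicit, but it is the same argument, not a different approach.
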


We can find a simple formula (Lemma~\ref{lm:BForm}) for the number of suitable colorings of $(G,\delta)_V$ since each of the three coordinates can be accounted for separately.  Let $H_1$, $H_2$, and $H_3$ respectively be the graphs formed by deleting the red, blue, and green edges from $(G,\delta)_V$, then ignoring the edge colors.  For any graph $H$, let $c(H)$ denote the number of connected components in $H$.

\begin{lemm}\label{lm:BForm}
The number of proper suitable colorings of $(G,\delta)_V$ is
\[|\mathcal{B}_V|=r^{c(H_1)}s^{c(H_2)}n^{c(H_3)}.\]
\end{lemm}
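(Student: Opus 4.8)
The plan is to decouple the three coordinates of a suitable coloring and to count each one independently. Given a suitable vertex coloring $\phi$ of $(G,\delta)_V$, I would regard it as a triple of maps $\phi_1\colon[m]\to[r]$, $\phi_2\colon[m]\to[s]$, $\phi_3\colon[m]\to[n]$, and then read the four edge-constraint rules one coordinate at a time. Doing so shows that the conditions split cleanly: $\phi_1$ must take equal values across every blue, green, and black edge (a red edge imposes no condition on $\phi_1$); $\phi_2$ must take equal values across every red, green, and black edge (blue edges are unconstrained); and $\phi_3$ must take equal values across every red, blue, and black edge (green edges are unconstrained). No rule links different coordinates, so the three maps may be chosen entirely independently.

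Next I would match these constraint sets to the graphs $H_1$, $H_2$, and $H_3$. Deleting the red edges from $(G,\delta)_V$ leaves exactly the blue, green, and black edges, which is by definition $H_1$; hence the condition on $\phi_1$ is precisely that $\phi_1$ be constant across every edge of $H_1$. In the same way, the constraint on $\phi_2$ is constancy across the edges of $H_2$ (obtained by deleting blue edges), and the constraint on $\phi_3$ is constancy across the edges of $H_3$ (obtained by deleting green edges). Here it is worth recording explicitly that a black edge, arising from step IV as the merge of parallel edges, constrains all three coordinates and therefore survives into each of $H_1$, $H_2$, and $H_3$.

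The one general fact I would invoke is standard: a function on the vertices of a graph $H$ is constant across every edge of $H$ if and only if it is constant on each connected component of $H$. The forward direction follows because equality propagates along paths within a component, and the converse is immediate since assigning a single value to each component makes the function edge-constant. Consequently $\phi_1$ is specified by one free choice from $[r]$ per connected component of $H_1$, giving $r^{c(H_1)}$ possibilities; likewise there are $s^{c(H_2)}$ choices for $\phi_2$ and $n^{c(H_3)}$ for $\phi_3$.

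Since the three coordinate maps are governed by disjoint constraints and may be selected independently, the total number of suitable colorings is the product $r^{c(H_1)}s^{c(H_2)}n^{c(H_3)}$, which together with the preceding lemma identifying $\mathcal{B}_V$ with the set of suitable colorings yields the claim. I expect no genuine difficulty beyond the bookkeeping in the first two paragraphs, namely keeping straight which edge colour constrains which coordinate and which graph $H_k$ therefore governs $\phi_k$; once that correspondence is pinned down, the count is immediate.
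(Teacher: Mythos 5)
Your proposal is correct and follows essentially the same argument as the paper's proof: the coordinates decouple, each $H_k$ encodes exactly the constraints on $\phi_k$, and constancy across edges forces constancy on components, giving $r^{c(H_1)}s^{c(H_2)}n^{c(H_3)}$ independent choices. Your write-up merely makes explicit the color-to-coordinate bookkeeping and the component-constancy fact that the paper's brief proof leaves implicit.
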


\begin{proof}
In order to be a suitable vertex coloring, the vertices in each component of $H_1$ must be assigned colors in $G$ that agree at the first coordinate.  We can thus assign the first coordinates of a suitable vertex coloring in $r^{c(H_1)}$ ways. Similar claims hold for $H_2$ and $H_3$.
\end{proof}

We are now ready to make the following fundamental observation about the polynomials $f_m$.

\begin{theo}\label{th:poly}
For fixed $m$, we have that $f_m=f_m(r,s,n)$ is given by a $3$-variable symmetric polynomial with integer coefficients of degree $3m$.
\end{theo}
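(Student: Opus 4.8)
The plan is to combine Lemmas~\ref{lm:PLRIncExc} and~\ref{lm:BForm} into the single closed form
$$f_m(r,s,n)=\sum_{V\subseteq C_m}(-1)^{|V|}\,r^{c(H_1)}s^{c(H_2)}n^{c(H_3)},$$
where $H_1,H_2,H_3$ depend on $V$, and then to read off each of the three asserted properties directly from this expression. Since $C_m$ is finite, the sum ranges over finitely many subsets $V$, and each summand is $\pm$ a monomial $r^{a}s^{b}n^{c}$ with nonnegative integer exponents $a=c(H_1)$, $b=c(H_2)$, $c=c(H_3)$ (numbers of connected components). Hence $f_m$ is manifestly a polynomial in $r,s,n$ with integer coefficients; this part is immediate and needs no further work.

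For the degree, I would observe that each $H_k$ is a graph on the vertex set $[m]$, so $c(H_k)\le m$, and therefore every monomial has total degree $c(H_1)+c(H_2)+c(H_3)\le 3m$. A monomial attains degree $3m$ exactly when $c(H_1)=c(H_2)=c(H_3)=m$, i.e.\ when all three $H_k$ are edgeless. Unwinding the construction of $(G,\delta)_V$, any edge of $(G,\delta)_V$ survives in at least two of $H_1,H_2,H_3$ (a red edge survives in $H_2$ and $H_3$, and so on, while a black edge survives in all three), so all three being edgeless forces $(G,\delta)_V$ to have no edges, which happens only for $V=\emptyset$. The empty set contributes the single monomial $(-1)^0 r^{m}s^{m}n^{m}=r^ms^mn^m$, giving coefficient $+1$ for $r^ms^mn^m$; as this is nonzero and no other $V$ reaches degree $3m$, there is no cancellation at the top and the polynomial has degree exactly $3m$.

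For symmetry, the cleanest route uses the parastrophism action from the introduction: for each $\pi\in S_3$, the map $L\mapsto L^{\pi}$ is a bijection from $\PLR(r,s,n;m)$ onto the set of partial Latin rectangles with the three dimensions permuted by $\pi$, so $\#\PLR(r,s,n;m)$ is unchanged under permuting $(r,s,n)$. Multiplying by $m!$ and using $f_m=m!\,\#\PLR(r,s,n;m)$ gives $f_m(d_1,d_2,d_3)=f_m(d_{\pi(1)},d_{\pi(2)},d_{\pi(3)})$ at every positive-integer triple $(d_1,d_2,d_3)$. Since a polynomial identity that holds on all of $(\mathbb{Z}^+)^3$ holds formally (a nonzero polynomial cannot vanish on the whole positive-integer grid), $f_m$ is a symmetric polynomial.

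I expect this symmetry step to be the main obstacle---not because it is deep, but because it is the one property not handed to us directly by the formula, and one must be careful to promote the numerical symmetry valid at integer points to a formal polynomial symmetry. A self-contained alternative, avoiding parastrophisms, is to build the bijection on subsets $V\subseteq C_m$ that relabels clash types by $k\mapsto\pi(k)$: this preserves $|V|$ and sends color-$k$ edges to color-$\pi(k)$ edges while fixing black edges, so that $c(H_1),c(H_2),c(H_3)$ are permuted accordingly, and re-indexing the sum yields $f_m$ with its arguments permuted. The fiddly point there is verifying that the parallel-edge merging of step~IV is compatible with the relabeling, which it is, because relabeling permutes the set of colors present at each pair $\{i,j\}$ without changing how many there are.
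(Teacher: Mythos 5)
Your proposal is correct, and all three parts go through: the integrality and polynomiality are immediate from combining Lemmas~\ref{lm:PLRIncExc} and~\ref{lm:BForm}, and your degree argument is sound (indeed slightly more detailed than the paper's, which simply notes that $|\mathcal{B}_V|$ has degree less than $3m$ for $V \neq \emptyset$; you additionally verify the key fact that every edge of $(G,\delta)_V$ survives in at least two of $H_1,H_2,H_3$, so that $c(H_1)=c(H_2)=c(H_3)=m$ forces $V=\emptyset$). The symmetry step is where you genuinely diverge from the paper. The paper's proof permutes the colors red, blue, and green --- equivalently, permutes the third coordinate of the elements of $C_m$ --- and observes that each orbit of subsets $V$ under this action contributes a sum of the six monomials $r^{c(H_{\sigma(1)})}s^{c(H_{\sigma(2)})}n^{c(H_{\sigma(3)})}$, which is symmetric; this is precisely the ``self-contained alternative'' you sketch at the end, including the point you flag about compatibility with the black-edge merging (the sign $(-1)^{|V|}$ is preserved since $|V|$ is unchanged under relabeling). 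Your primary route is different: you use the parastrophism action to get $\#\PLR(d_1,d_2,d_3;m)=\#\PLR(d_{\pi(1)},d_{\pi(2)},d_{\pi(3)};m)$ numerically at every positive-integer triple, then promote this to a formal polynomial identity via the standard fact that a polynomial vanishing on all of $(\mathbb{Z}^+)^3$ is zero. This is valid, and you are right to identify the promotion step as the point needing care --- it is correctly handled. What each approach buys: the paper's color-permutation argument stays entirely inside the inclusion-exclusion machinery and needs no interpolation lemma, while your parastrophism argument is more conceptual, explains \emph{why} symmetry should hold (it reflects the $S_3$-symmetry of the underlying triple system), and would transfer unchanged to any other enumeration formula for $f_m$.
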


\begin{proof}
Lemmas~\ref{lm:PLRIncExc} and~\ref{lm:BForm} imply that $f_m(r,s,n)=\sum_{V \subseteq C_m} (-1)^{|V|} |\mathcal{B}_V|$ where $|\mathcal{B}_V|$ is given by $r^{c(H_1)}s^{c(H_2)}n^{c(H_3)}$ for the graph $(G,\delta)_V$.  This ensures that $f_m(r,s,n)$ is a polynomial in variables $r,s,n$ and has integer coefficients.  The leading term is $(rsn)^m$, which arises when $V=\emptyset$; for all other $V \subseteq C_m$, we see $|\mathcal{B}_V|$ has degree less than $3m$. Finally, to verify that $f_m(r,s,n)$ is a symmetric polynomial, we observe that we can permute the colors red, blue, and green (or equivalently, permute the third coordinate of the elements in $C_m$).  Each equivalence class under this action contributes
\begin{align*}
 & r^{c(H_1)}s^{c(H_2)}n^{c(H_3)}
+r^{c(H_1)}s^{c(H_3)}n^{c(H_2)}
+r^{c(H_2)}s^{c(H_1)}n^{c(H_3)} \\
+{}& r^{c(H_2)}s^{c(H_3)}n^{c(H_1)}
+r^{c(H_3)}s^{c(H_1)}n^{c(H_2)}
+r^{c(H_3)}s^{c(H_2)}n^{c(H_1)}
\end{align*}
to the sum in Lemma~\ref{lm:PLRIncExc}, which is symmetric.  We conclude that $f_m(r,s,n)$ is the sum of symmetric polynomials, and is also symmetric.
\end{proof}

\subsection{A simplified equation}

For a $4$-edge-colored graph $(G,\delta)$, with possible edge colors red, blue, green, and black, let $|G|$ be the number of vertices in $G$, let $|E(G)|$ be the number of edges in $G$, and let $b(\delta)$ be the number of black edges in $\delta$. There are $4^{|b(\delta)|}$ sets $V \subseteq C_m$ for which $(G,\delta)=(G,\delta)_V$, since a black edge can be formed in $4$ possible ways: (a) when exactly two of properties I, II and III hold, or (b) when all three of properties I, II and III hold. From Lemmas~\ref{lm:PLRIncExc} and~\ref{lm:BForm}, we have

\begin{align*}
f_m(r,s,n) &= \sum_{V \subseteq C_m} (-1)^{|V|} |\mathcal{B}_V| \\
 &= \sum_{\substack{(G,\delta) \\ |G|=m}} \sum_{\substack{V \subseteq C_m: \\ G(V)=G, \\ \delta(V)=\delta}} (-1)^{|V|} |\mathcal{B}_V| \\
 &= \sum_{\substack{(G,\delta) \\ |G|=m}} r^{c(H_1)}s^{c(H_2)}n^{c(H_3)} \sum_{\substack{V \subseteq C_m: \\ G(V)=G, \\ \delta(V)=\delta}} (-1)^{|V|}.
\end{align*}
From here, we use the following identity from \cite{StonesWanless2009b}: For any $(G,\delta)$, we have
\begin{align*}
\sum_{\substack{V \subseteq C_m: \\ G(V)=G, \\ \delta(V)=\delta}} (-1)^{|V|} &= \sum_{x \geq 0} \binom{b(\delta)}{x} (-1)^{|E(G)|+b(\delta)+x} 3^{b(\delta)-x} \\
 &= (-1)^{|E(G)|} (-2)^{b(\delta)}
\end{align*}
using the Binomial Theorem.  The local variable $x$ counts the number of black edges where I, II and III all hold.  This yields the following theorem:
\begin{theo}\label{th:PolySimp}
For all $r,s,n,m \geq 1$, we have
$$f_m(r,s,n) = \sum_{\substack{(G,\delta) \\ |G|=m}} (-1)^{|E(G)|} (-2)^{b(\delta)} r^{c(H_1)}s^{c(H_2)}n^{c(H_3)}.$$
\end{theo}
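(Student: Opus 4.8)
The plan is to start from the triple sum expression for $f_m(r,s,n)$ that was derived immediately before the theorem statement, and then apply the cited combinatorial identity to collapse the innermost sum. First I would record the intermediate formula
\[
f_m(r,s,n) = \sum_{\substack{(G,\delta) \\ |G|=m}} r^{c(H_1)}s^{c(H_2)}n^{c(H_3)} \sum_{\substack{V \subseteq C_m: \\ G(V)=G, \\ \delta(V)=\delta}} (-1)^{|V|},
\]
which follows from Lemmas~\ref{lm:PLRIncExc} and~\ref{lm:BForm} by grouping the sets $V$ according to the edge-colored graph $(G,\delta)$ they induce. The outer sum runs over all edge-colored graphs on vertex set $[m]$, and for each such graph the quantities $c(H_1)$, $c(H_2)$, $c(H_3)$ depend only on $(G,\delta)$ and not on the particular $V$, so they factor out of the inner sum.

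The next step is to evaluate the inner signed sum $\sum_V (-1)^{|V|}$ over all $V \subseteq C_m$ with $G(V)=G$ and $\delta(V)=\delta$. The key observation is that the fibre over a fixed $(G,\delta)$ is governed entirely by how the black edges can arise: each non-black edge of $G$ corresponds to exactly one element of $C_m$ (determined by its color), whereas each black edge can be produced in $4$ ways, namely by any two of properties I, II, III holding (three choices) or by all three holding (one choice). Introducing the local variable $x$ to count how many of the $b(\delta)$ black edges are produced by all three properties simultaneously, I would expand the sum as a binomial sum over $x$: choosing which $x$ black edges use all three clashes contributes $\binom{b(\delta)}{x}$, the remaining $b(\delta)-x$ black edges each admit $3$ choices of which pair of properties holds, and the total parity $|V|$ is $|E(G)| + b(\delta) + x$ (each of the $|E(G)|-b(\delta)$ non-black edges contributes one clash, each of the $b(\delta)-x$ two-property black edges contributes two clashes, and each of the $x$ three-property black edges contributes three clashes; reducing mod $2$ gives the stated exponent). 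This yields
\[
\sum_{\substack{V \subseteq C_m: \\ G(V)=G, \\ \delta(V)=\delta}} (-1)^{|V|} = \sum_{x \geq 0} \binom{b(\delta)}{x} (-1)^{|E(G)|+b(\delta)+x} 3^{b(\delta)-x},
\]
exactly the identity imported from \cite{StonesWanless2009b}.

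The final step is to simplify this binomial expression. Factoring out $(-1)^{|E(G)|+b(\delta)}$ leaves $\sum_{x} \binom{b(\delta)}{x} 3^{b(\delta)-x} (-1)^{x}$, which by the Binomial Theorem equals $(3-1)^{b(\delta)} = 2^{b(\delta)}$; combining this with the sign prefactor gives $(-1)^{|E(G)|}(-2)^{b(\delta)}$. Substituting back into the outer sum produces the claimed closed form. I expect the main obstacle to be the careful bookkeeping in the combinatorial identity: verifying that the parity of $|V|$ is genuinely $|E(G)|+b(\delta)+x \pmod 2$ and that the weight $\binom{b(\delta)}{x}3^{b(\delta)-x}$ correctly counts the preimages requires tracking exactly how many clash-triples $[i,j,k]$ each edge contributes as a function of its color and its mode of formation. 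Once that enumeration is pinned down, the remaining algebra is a routine application of the Binomial Theorem, and since the per-orbit symmetry was already established in Theorem~\ref{th:poly}, no further work is needed to confirm that the resulting polynomial is symmetric.
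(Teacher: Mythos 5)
Your proposal is correct and follows essentially the same route as the paper: grouping the sets $V$ by the edge-colored graph $(G,\delta)_V$ they induce, then collapsing the inner signed sum via the identity $\sum_{x\geq 0}\binom{b(\delta)}{x}(-1)^{|E(G)|+b(\delta)+x}3^{b(\delta)-x}=(-1)^{|E(G)|}(-2)^{b(\delta)}$ and the Binomial Theorem. The only difference is that you re-derive the fibre-counting identity (correctly, including the exact count $|V|=|E(G)|+b(\delta)+x$ and the weight $\binom{b(\delta)}{x}3^{b(\delta)-x}$ for the four ways a black edge can arise), whereas the paper imports it from \cite{StonesWanless2009b}.
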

The advantage of Theorem~\ref{th:PolySimp} is that it eliminates the need for accounting for clashes (via the variable $V$).  Instead, we are now working solely with graphs. For computational purposes, it is easier to work with isomorphism classes of graphs (rather than labeled graphs).

We will also account for isolated vertices mathematically.  For $v \geq 0$ and $e \geq 0$, let $\Gamma_{e,v}$ denote the set of unlabeled $e$-edge $v$-vertex graphs without isolated vertices (the set $\Gamma_{0,0}$ contains the empty graph, whereas $\Gamma_{e,1}=\emptyset$).  We can split Theorem~\ref{th:PolySimp} according to $e$, $v$, and $\Gamma_{e,v}$ to give the following theorem.

\begin{theo}\label{th:isosplit}
For all $r,s,n,m \geq 1$, we have
$$f_m(r,s,n) =(rsn)^m+\sum_{v \geq 2} \binom{m}{v} (rsn)^{m-v+1} \sum_{e \geq 1} (-1)^e  \sum_{G \in \Gamma_{e,v}} \frac{v!}{|\mathrm{Aut}(G)|} P(G)$$
where $$P(G)=P(G;r,s,n):=\sum_{\delta} (-2)^{b(\delta)} r^{c(H_1)-1}s^{c(H_2)-1}n^{c(H_3)-1}$$ where the sum is over all edge colorings $\delta$ of $G$.
\end{theo}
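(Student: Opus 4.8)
The plan is to start from Theorem~\ref{th:PolySimp} and reorganize its sum over labeled edge-colored graphs $(G,\delta)$ on the vertex set $[m]$ by first isolating the contribution of the vertices that carry no edges, and then collapsing the remaining labeled graphs into isomorphism classes. The guiding observation is that both the edge count and the black-edge count of $(G,\delta)$ are unaffected by deleting isolated vertices, while each isolated vertex contributes exactly one component to each of $H_1$, $H_2$, and $H_3$.

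First I would fix a labeled graph $(G,\delta)$ with $|G|=m$, let $v$ be the number of non-isolated vertices, let $S \subseteq [m]$ be the set of those vertices, and let $(G',\delta')$ be the induced edge-colored subgraph on $S$. Writing $H_1',H_2',H_3'$ for the analogues of $H_1,H_2,H_3$ for $G'$, I would record the identities $|E(G)|=|E(G')|$, $b(\delta)=b(\delta')$, and $c(H_k)=c(H_k')+(m-v)$ for $k=1,2,3$, the last because all edges of $G$ lie inside $S$, so each of the $m-v$ isolated vertices is a singleton component in every $H_k$. These give the factorization $r^{c(H_1)}s^{c(H_2)}n^{c(H_3)}=(rsn)^{m-v}\,r^{c(H_1')}s^{c(H_2')}n^{c(H_3')}$, so the summand of Theorem~\ref{th:PolySimp} splits as $(rsn)^{m-v}$ times the corresponding summand for $(G',\delta')$.

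Next I would partition the sum of Theorem~\ref{th:PolySimp} according to $v$. The term $v=0$ (the empty graph) yields the isolated term $(rsn)^m$; the value $v=1$ cannot occur, since a vertex incident to an edge forces a second non-isolated vertex (equivalently $\Gamma_{e,1}=\emptyset$). For each $v \geq 2$, summing over the $\binom{m}{v}$ choices of $S$ gives, by symmetry of the contribution in the chosen subset, a factor $\binom{m}{v}(rsn)^{m-v}$ multiplying the sum over edge-colored graphs on $[v]$ with no isolated vertices. Splitting this latter sum by the edge count $e \geq 1$ lets me pull out $(-1)^{|E(G')|}=(-1)^e$.

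The final and most delicate step is passing from labeled to unlabeled graphs. For a fixed unlabeled $G \in \Gamma_{e,v}$, the Orbit--Stabilizer Theorem shows there are exactly $v!/|\mathrm{Aut}(G)|$ labeled copies of $G$ on $[v]$, and the quantity $\sum_\delta (-2)^{b(\delta)} r^{c(H_1)}s^{c(H_2)}n^{c(H_3)}$ is an isomorphism invariant because relabeling preserves edge counts, black-edge counts, and component counts; here the main care is to confirm that summing over all edge colorings of each labeled copy is genuinely independent of the labeling, so that each of the $v!/|\mathrm{Aut}(G)|$ copies contributes the same value. This replaces the labeled sum by $\sum_{G\in\Gamma_{e,v}} \tfrac{v!}{|\mathrm{Aut}(G)|}\sum_\delta (-2)^{b(\delta)} r^{c(H_1)}s^{c(H_2)}n^{c(H_3)}$. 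Factoring one power of $r$, $s$, and $n$ out of the inner sum --- legitimate since $c(H_k)\geq 1$ for every nonempty graph --- produces the extra $rsn$ that upgrades $(rsn)^{m-v}$ to $(rsn)^{m-v+1}$ and leaves precisely $P(G)$ behind. Assembling the $v=0$ term with the contributions for $v \geq 2$ then yields the claimed identity.
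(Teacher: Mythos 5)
Your proof is correct and follows essentially the same route as the paper's: the paper likewise starts from Theorem~\ref{th:PolySimp}, counts the $\binom{m}{v}\,v!/|\mathrm{Aut}(G)|$ labeled copies of each $G\in\Gamma_{e,v}$ padded with $m-v$ isolated vertices (each isolated vertex contributing a factor $rsn$), and then rearranges to extract $(rsn)^{m-v+1}$ and $P(G)$. Your write-up merely makes explicit the bookkeeping the paper leaves implicit --- the invariance of $|E(G)|$, $b(\delta)$, and the component counts under relabeling, the identity $c(H_k)=c(H_k')+(m-v)$, and the emptiness of $\Gamma_{e,1}$.
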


\begin{proof}
Given a graph $G \in \Gamma_{e,v}$, there are $\binom{m}{v} \frac{v!}{|\mathrm{Aut}(G)|}$ labeled graphs on the vertex set $[m]$ that are isomorphic to $G$ together with $m-v$ isolated vertices.  Thus,
\begin{align*}
f_m(r,s,n) &= \sum_{\substack{(G,\delta) \\ |G|=m}} (-1)^{|E(G)|} (-2)^{b(\delta)} r^{c(H_1)}s^{c(H_2)}n^{c(H_3)} \\
 &= \sum_{\substack{G \in \Gamma_{e,v}\\v \geq 0 \\ e \geq 0}}\sum_\delta \binom{m}{v} \frac{v!}{|\mathrm{Aut}(G)|} (-1)^e (-2)^{b(\delta)} (rsn)^{m-v} r^{c(H_1)}s^{c(H_2)}n^{c(H_3)}.
\end{align*}
We obtain the theorem by rearranging this equation.
\end{proof}

The following corollary follows straightforwardly from Theorem~\ref{th:isosplit}.

\begin{corol}
For fixed $m \geq 1$, the polynomial $f_m(r,s,n)$ is divisible by $rsn$.
\end{corol}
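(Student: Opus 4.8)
The plan is to read the divisibility directly off the closed form in Theorem~\ref{th:isosplit}, where every summand already displays an explicit power of $rsn$ as a factor. Writing
\[
f_m(r,s,n) = (rsn)^m + \sum_{v \geq 2} \binom{m}{v} (rsn)^{m-v+1} \sum_{e \geq 1} (-1)^e \sum_{G \in \Gamma_{e,v}} \frac{v!}{|\mathrm{Aut}(G)|} P(G),
\]
I would treat the leading term and the remaining sum separately. The leading term $(rsn)^m$ is divisible by $rsn$ because $m \geq 1$, so it remains only to check that each term of the sum carries at least one factor of $rsn$.

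First I would restrict the range of $v$. Since $\binom{m}{v}=0$ whenever $v > m$, the only surviving terms have $2 \leq v \leq m$, and for these the exponent satisfies $m-v+1 \geq 1$. Hence the factor $(rsn)^{m-v+1}$ is a genuine positive power of $rsn$, and in particular divisible by $rsn$.

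Next I would confirm that the remaining factor $P(G)$ introduces no negative powers that could spoil this divisibility. For any $G \in \Gamma_{e,v}$ with $v \geq 1$, each of the subgraphs $H_1$, $H_2$, $H_3$ has at least one connected component, so $c(H_i)-1 \geq 0$ for $i \in \{1,2,3\}$. Thus every monomial $r^{c(H_1)-1}s^{c(H_2)-1}n^{c(H_3)-1}$ appearing in $P(G)$ is a polynomial with nonnegative exponents, while $\frac{v!}{|\mathrm{Aut}(G)|}$ is a nonnegative integer. Consequently every term of the sum is $(rsn)^{m-v+1}$ times a polynomial with integer coefficients, hence divisible by $rsn$.

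Putting these together, $f_m(r,s,n)$ is a sum of terms each divisible by $rsn$, so $rsn \mid f_m(r,s,n)$. There is no real obstacle here; the only points that need care are verifying that $v \leq m$ on the support of the binomial coefficient (so that the exponent $m-v+1$ stays at least $1$) and that $P(G)$ contributes only nonnegative powers of $r$, $s$, and $n$.
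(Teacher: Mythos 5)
Your proof is correct and follows exactly the route the paper intends: the corollary is stated to follow ``straightforwardly'' from Theorem~\ref{th:isosplit}, and you have simply made that explicit by checking that the leading term carries $(rsn)^m$ with $m \geq 1$, that the binomial coefficient restricts the sum to $2 \leq v \leq m$ so the factor $(rsn)^{m-v+1}$ is a positive power, and that $P(G)$ is a genuine polynomial (nonnegative exponents since $c(H_i) \geq 1$, integer coefficients since $|\mathrm{Aut}(G)|$ divides $v!$). Nothing is missing, and the points you flag as needing care are precisely the right ones.
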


Furthermore, we use the next result to reduce the required computation.

\begin{lemm} Let $G_1$ and $G_2$ be two graphs. Then,
\begin{enumerate}
\item if both graphs are disjoint, then, $P(G_1 \cup G_2)=rsn\,P(G_1)P(G_2)$; and
\item if both graphs meet at a single vertex, then $P(G_1 \cup G_2)=P(G_1)P(G_2)$.
\end{enumerate}
\end{lemm}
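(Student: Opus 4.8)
The plan is to fix an edge coloring of $G := G_1 \cup G_2$ and reduce everything to how the three auxiliary graphs $H_1,H_2,H_3$ and the black-edge count $b(\delta)$ decompose across $G_1$ and $G_2$. The key preliminary observation is that in both cases the edge sets of $G_1$ and $G_2$ are disjoint (a single shared vertex contributes no edge), so specifying an edge coloring $\delta$ of $G$ is the same as specifying a pair $(\delta_1,\delta_2)$ of edge colorings of $G_1$ and $G_2$; thus $\sum_\delta$ factors as an independent double sum $\sum_{\delta_1}\sum_{\delta_2}$. Since the black edges of $\delta$ are exactly those of $\delta_1$ together with those of $\delta_2$, we get $b(\delta)=b(\delta_1)+b(\delta_2)$, so the weight factors as $(-2)^{b(\delta)}=(-2)^{b(\delta_1)}(-2)^{b(\delta_2)}$.

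Next I would analyze the connected components. For $k\in\{1,2,3\}$ let $H_k^{(t)}$ be the graph obtained from $(G_t,\delta_t)$ by deleting the edges of the $k$-th color ($t\in\{1,2\}$). Because the edge sets are disjoint, $H_k$ is the union of $H_k^{(1)}$ and $H_k^{(2)}$ on the combined vertex set, with no new edges introduced. In the disjoint case the vertex sets are also disjoint, so $H_k$ is the disjoint union of $H_k^{(1)}$ and $H_k^{(2)}$ and hence $c(H_k)=c(H_k^{(1)})+c(H_k^{(2)})$. In the single-vertex case, let $v$ be the shared vertex; since every vertex is retained when edges are deleted, $v$ lies in both $H_k^{(1)}$ and $H_k^{(2)}$, and it is their only common vertex. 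Forming $H_k$ therefore merges precisely the component of $v$ in $H_k^{(1)}$ with the component of $v$ in $H_k^{(2)}$ into one component, leaving every other component untouched, so $c(H_k)=c(H_k^{(1)})+c(H_k^{(2)})-1$. This component-counting step is the only place any care is needed, and is the crux of the argument; I would take a moment to check that the merge formula holds even when $v$ happens to be isolated in one or both of the $H_k^{(t)}$, since we always merge exactly the two $v$-components regardless.

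Writing $c(H_k)=c(H_k^{(1)})+c(H_k^{(2)})+\epsilon$ with $\epsilon=0$ in the disjoint case and $\epsilon=-1$ in the single-vertex case, each factor in the definition of $P$ splits as $r^{c(H_1)-1}=r^{\epsilon+1}\,r^{c(H_1^{(1)})-1}r^{c(H_1^{(2)})-1}$, and similarly with $s,c(H_2)$ and with $n,c(H_3)$. Substituting into the definition of $P(G)$ and collecting the $(\delta_1,\delta_2)$-indexed terms, the double sum factors as
\[
P(G)=(rsn)^{\epsilon+1}\Biggl(\sum_{\delta_1}(-2)^{b(\delta_1)}r^{c(H_1^{(1)})-1}s^{c(H_2^{(1)})-1}n^{c(H_3^{(1)})-1}\Biggr)\Biggl(\sum_{\delta_2}(-2)^{b(\delta_2)}r^{c(H_1^{(2)})-1}s^{c(H_2^{(2)})-1}n^{c(H_3^{(2)})-1}\Biggr),
\]
where each parenthesized factor is, by definition, $P(G_1)$ and $P(G_2)$ respectively. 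Since $(rsn)^{\epsilon+1}=rsn$ when $\epsilon=0$ and $(rsn)^{\epsilon+1}=1$ when $\epsilon=-1$, this yields $P(G_1\cup G_2)=rsn\,P(G_1)P(G_2)$ in the disjoint case and $P(G_1\cup G_2)=P(G_1)P(G_2)$ in the single-vertex case, proving both parts. The entire difference between the two parts is thus localized in the single unit of $\epsilon$, i.e.\ in whether the shared vertex forces one component to be lost.
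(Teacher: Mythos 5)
Your proof is correct: the factorization of the coloring sum over the disjoint edge sets, the additivity $b(\delta)=b(\delta_1)+b(\delta_2)$, and the component count $c(H_k)=c(H_k^{(1)})+c(H_k^{(2)})+\epsilon$ (with $\epsilon=-1$ exactly when the two $v$-components merge at the shared vertex) together give both identities, and your check that the merge formula survives $v$ being isolated in $H_k^{(t)}$ covers the only delicate point. The paper states this lemma without any proof, and your argument is precisely the routine decomposition the authors evidently regarded as immediate, so there is nothing to compare beyond noting that you have supplied the omitted details correctly.
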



Finally, the following lemma is useful for finding which graphs have to be included when computing the leading terms in $f_m(r,s,n)$.

\begin{lemm}\label{lm:maxdeg}
For any graph $G$ on $v$ vertices, the degree of $(rsn)^{m-v+1}P(G)$ in Theorem~\ref{th:isosplit} is at most $3m-2v+2c(G)$.
\end{lemm}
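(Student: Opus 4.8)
The plan is to convert the claimed degree bound into a purely combinatorial inequality about the component counts $c(H_1),c(H_2),c(H_3)$, and then to prove that inequality by a spanning-tree counting argument. First I would unwind the definition of $P(G)$ from Theorem~\ref{th:isosplit}. Each edge coloring $\delta$ of $G$ contributes to $P(G)$ a single monomial $(-2)^{b(\delta)}r^{c(H_1)-1}s^{c(H_2)-1}n^{c(H_3)-1}$, of degree $c(H_1)+c(H_2)+c(H_3)-3$, so multiplying by $(rsn)^{m-v+1}$ produces total degree $3m-3v+\big(c(H_1)+c(H_2)+c(H_3)\big)$. Since we only want an upper bound on the degree, any cancellation among these monomials can only help, and it suffices to bound the largest exponent sum that occurs. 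Comparing $3m-3v+\big(c(H_1)+c(H_2)+c(H_3)\big)$ with the target $3m-2v+2c(G)$, the lemma reduces to proving that, for every edge coloring $\delta$ of $G$,
$$c(H_1)+c(H_2)+c(H_3)\le v+2c(G).$$

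Next I would localize this to one connected component. Each $H_k$ is a spanning subgraph of $G$, so its connected components refine those of $G$; consequently $c(H_k)$ is additive over the connected components of $G$, as are $v$ and (trivially) $c(G)$, the latter contributing $1$ per component. Writing $v+2c(G)=\sum_K (v_K+2)$ over the components $K$ of $G$, it is therefore enough to treat the case where $G$ is connected and establish the sharper bound $c(H_1)+c(H_2)+c(H_3)\le v+2$.

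The heart of the argument, and the step I expect to be the main obstacle, is this connected case. The key observation is that every edge of $G$ survives in at least two of the three subgraphs $H_1,H_2,H_3$: a red edge lies in $H_2,H_3$, a blue edge in $H_1,H_3$, a green edge in $H_1,H_2$, and a black edge in all three. I would then fix a spanning tree $T$ of $G$, which has $v-1$ edges since $G$ is connected. For each $k$, the edges of $T$ that belong to $H_k$ form a forest (being a subset of the edge set of a tree), so their number is at most the size $v-c(H_k)$ of a spanning forest of $H_k$. Summing over $k$ and re-counting by the edges of $T$ gives
$$\sum_{k=1}^{3}\big(v-c(H_k)\big)\ \ge\ \sum_{e\in T}\#\{k : e\in H_k\}\ \ge\ 2(v-1),$$
where the last inequality uses that each tree edge is counted in at least two of the $H_k$. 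Rearranging yields $c(H_1)+c(H_2)+c(H_3)\le 3v-2(v-1)=v+2$, which is precisely the connected-case bound and hence completes the reduction. The only delicate points to verify carefully are the additivity of $c(H_k)$ over the components of $G$ and the forest-rank inequality $\#(T\cap H_k)\le v-c(H_k)$; both are routine once the spanning-tree viewpoint is in place.
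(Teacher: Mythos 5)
Your proof is correct, but it takes a genuinely different route from the paper's. Both arguments reduce the lemma to the same core inequality $c(H_1)+c(H_2)+c(H_3)\le v+2c(G)$ for every edge coloring $\delta$, and your bookkeeping of degrees ($(rsn)^{m-v+1}$ contributes $3(m-v+1)$ and each monomial of $P(G)$ contributes $c(H_1)+c(H_2)+c(H_3)-3$, with cancellation only lowering the degree) matches the paper exactly. The paper then proves the inequality by induction on the number of edges: starting from the edgeless graph (where equality holds) it adds one colored edge at a time and checks, color by color, that whenever the new edge merges two components of $G$ it also drops at least two of $c(H_1),c(H_2),c(H_3)$ by one. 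You instead first localize to a connected component via additivity of the $c(H_k)$ (valid, since each $H_k$ is a spanning subgraph of $G$, so its components refine those of $G$), and then prove the connected case $c(H_1)+c(H_2)+c(H_3)\le v+2$ by a double count over a spanning tree $T$: each of the $v-1$ tree edges lies in at least two of the $H_k$ (a red edge lies in $H_2,H_3$, etc.), while for each $k$ the tree edges inside $H_k$ form a forest of $H_k$ and hence number at most $v-c(H_k)$, giving $\sum_k\bigl(v-c(H_k)\bigr)\ge 2(v-1)$. Both of your ``delicate points'' are indeed routine and hold as stated. What each approach buys: the paper's induction is self-contained, requires no component decomposition, and observes in passing that equality holds when all edges are red; your argument replaces the four-way case analysis by colors with a single matroid-rank (forest-size) estimate, making the structural reason for the bound --- each edge surviving in at least two of the three graphs --- completely transparent, at the small cost of the preliminary reduction to connected $G$.
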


\begin{proof}
From Theorem~\ref{th:isosplit}, the degree of $(rsn)^{m-v+1}P(G)$ is at most $3m-3v+\max_\delta(c(H_1)+c(H_2)+c(H_3))$. Let us show, by induction on the number of edges, that
\begin{equation}\label{eq:compineq}
c(H_1)+c(H_2)+c(H_3) \leq v+2c(G)
\end{equation}
for any $4$-edge-coloring $\delta$ (with equality when all the edges are red, say). If $G$ has no edges, then we have equality in \eqref{eq:compineq}.  Next, assume \eqref{eq:compineq} holds for some $4$-edge-coloring, and add a colored edge $xy$.  Adding this edge will not increase $c(H_1)$, $c(H_2)$, and $c(H_3)$, so \eqref{eq:compineq} continues to hold unless possibly if adding $xy$ affects $c(G)$.  Adding $xy$ decreases $c(G)$ by $1$ if and only if $x$ and $y$ belong to separate components of $G$.  In this case, $x$ and $y$ also belong to separate components of $H_1$, $H_2$, and $H_3$.  If $xy$ is a red edge, then $c(H_2)$ and $c(H_3)$ both decrease by $1$, and \eqref{eq:compineq} holds for the new graph.  The same argument works if $xy$ is a blue or green edge.  If $xy$ is a black edge, then $c(H_1)$, $c(H_2)$, and $c(H_3)$ all decrease by $1$, and \eqref{eq:compineq} holds for the new graph.
\end{proof}

\section{Chromatic polynomial method}\label{sec:chrom_poly}

Let $R_{r,s}$ be the $r \times s$ \emph{rook's graph}, i.e., the Cartesian product of $K_r$ and $K_s$.  The graph $R_{3,4}$ is drawn in Figure~\ref{fi:R34}.  Any partial Latin rectangle in $\PLR(r,s,n;m)$ can be interpreted as a proper $n$-coloring of an $m$-vertex induced subgraph of $R_{r,s}$.  An example of this correspondence is also given in Figure~\ref{fi:R34}.

\begin{figure}[ht]
\centering
\begin{tikzpicture}[scale=0.7]
\matrix[nodes={draw, thick, fill=white!100},row sep=1cm,column sep=1cm] {
  \node[circle](11){}; &
  \node[circle](12){}; &
  \node[circle](13){}; &
  \node[circle](14){}; \\
  \node[circle](21){}; &
  \node[circle](22){}; &
  \node[circle](23){}; &
  \node[circle](24){}; \\
  \node[circle](31){}; &
  \node[circle](32){}; &
  \node[circle](33){}; &
  \node[circle](34){}; \\
};
\draw (11) to (12) to (13) to (14); \draw (11) to[bend left] (13); \draw (12) to[bend left] (14); \draw (11) to[bend left] (14);
\draw (21) to (22) to (23) to (24); \draw (21) to[bend left] (23); \draw (22) to[bend left] (24); \draw (21) to[bend left] (24);
\draw (31) to (32) to (33) to (34); \draw (31) to[bend left] (33); \draw (32) to[bend left] (34); \draw (31) to[bend left] (34);
\draw (11) to (21) to (31) to[bend left] (11);  \draw (12) to (22) to (32) to[bend left] (12);  \draw (13) to (23) to (33) to[bend left] (13);  \draw (14) to (24) to (34) to[bend left] (14);
\end{tikzpicture}
\qquad
\begin{tikzpicture}[scale=0.7]
\matrix[nodes={draw, thick, fill=white!100},row sep=0.7cm,column sep=0.7cm] {
  \node[draw=none](11){$\cdot$}; &
  \node[draw=none](12){$\cdot$}; &
  \node[draw=none](13){$\cdot$}; &
  \node[circle](14){$1$}; \\
  \node[draw=none](21){$\cdot$}; &
  \node[circle](22){$2$}; &
  \node[circle](23){$4$}; &
  \node[draw=none](24){$\cdot$}; \\
  \node[circle](31){$2$}; &
  \node[draw=none](32){$\cdot$}; &
  \node[draw=none](33){$\cdot$}; &
  \node[circle](34){$3$}; \\
};
\draw (-4,3) rectangle (4,-3);

\draw (31) to[bend left] (34) to[bend left] (14);
\draw (22) to (23);
\end{tikzpicture}
\caption{The graph $R_{3,4}$ along with a proper $4$-coloring of an induced $5$-vertex subgraph of $R_{3,4}$.  This illustrates the corresponding partial Latin rectangle in $\PLR(3,4,4;5)$.}\label{fi:R34}
\end{figure}
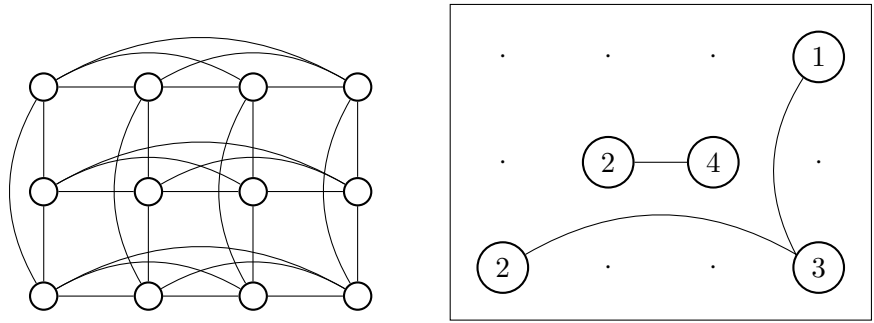

We can naturally think of (labeled) induced subgraphs of $R_{r,s}$ as $(0,1)$-matrices, with a $1$ in cell $(i,j)$ whenever vertex $(i,j)$ is present.  Under this equivalence, we talk of the \emph{rows} and \emph{columns} of such graphs, and of the chromatic polynomial of their corresponding induced subgraphs. If $\Pi$ denotes this chromatic polynomial, then
\begin{equation}\label{eq:sumchrom}
\#\PLR(r,s,n;m) = \sum_{M} \Pi(M;n)
\end{equation}
where the sum is over all $r \times s$ $(0,1)$-matrices $M$ with exactly $m$ ones (or equivalently over all $m$-vertex induced subgraphs of $R_{r,s}$).



Given any $r \times s$ $(0,1)$-matrix $M$, we define a \emph{general block} of $M$ to be a submatrix $H$ in which: (a) every row and every column of $H$ contains a $1$; (b) in $M$, there are no $1$'s in the rows of $H$ outside of $H$; and (c) in $M$, there are no $1$'s in the columns of $H$ outside of $H$.  We define a \emph{block} as a general block which has no proper submatrix which is general block in itself.  Blocks correspond to components of the induced subgraph of $R_{r,s}$.

We act on the set of $r \times s$ $(0,1)$-matrices by permuting rows and columns.  Under this action, we choose representatives from each orbit and call them \emph{canonical}. After that, we define a function $C$ such that $C(M)$ is the canonical matrix in the orbit of each $(0,1)$-matrix $M$. From $M$, we can also construct a multiset $\{C(M_i)\}_{i=1}^k$ where $M_1,M_2,\ldots,M_k$ are the blocks of $M$.

In the other direction, let $\mathcal{K}_{r,s,m,k}$ denote the set of multisets $\mathbf{K}=\{K_i\}_{i=1}^k$ of canonical blocks such that (a) the number of $1$'s in the blocks sum to $m$; (b) the number of rows in the blocks sum to $\leq r$; and (c) the number of columns in the blocks sum to $\leq s$.  Given any $\mathbf{K} \in \mathcal{K}_{r,s,m,k}$, we can arrange the blocks as follows:
$$
\begin{array}{|c|c|c|c|c|}
\hline
K_1 & \emptyset & \cdots & \emptyset & \emptyset \\
\hline
\emptyset & K_2 & \cdots & \emptyset & \emptyset \\
\hline
\vdots & \vdots & \ddots & \vdots & \vdots \\
\hline
\emptyset & \emptyset & \cdots & K_k & \emptyset \\
\hline
\emptyset & \emptyset & \cdots & \emptyset & \emptyset \\
\hline
\end{array},
$$
where $\emptyset$ denotes an all-$0$ submatrix, so that there are $r$ rows and $c$ columns.  Call this matrix $M(\mathbf{K})$.  If we permute the rows and columns of this matrix, we generate every $r \times s$ $(0,1)$-matrix $M$ that has $\{C(M_i)\}_{i=1}^k=\mathbf{K}$ some number of times, $\Gamma$ say, by the Orbit-Stabilizer Theorem.  It follows from \eqref{eq:sumchrom} that
\begin{equation}\label{eq:PLSchrom}
\#\PLR(r,s,n;m) = \sum_{k \geq 0} \sum_{ \mathbf{K} \in \mathcal{K}_{r,s,m,k} } \frac{r! s!}{\Gamma}\,\Pi(M(\mathbf{K});n).
\end{equation}
If $\mathbf{K}=\{K_i\}_{i=1}^k$, then $$\Pi(M(\mathbf{K});n)=\prod_{i=1}^k \Pi(K_i;n)$$ since each $K_i$ corresponds to a disjoint component in the induced subgraph of $R_{r,s}$.

Let $e_\text{row}$ and $e_\text{col}$ denote the number of non-empty rows and columns in the matrix $M(\mathbf{K})$, respectively.  If there are $\ell$ distinct matrices in the multiset $\mathbf{K}$, let $k_i$, for $i \in [\ell]$, be the number of copies of the $i$-th distinct matrix.  The elements in the stabilizer of $M(\mathbf{K})$ are those which permute the all-$0$ rows and columns, permute the identical blocks amongst themselves, and stabilize each $K_i$ individually.  Thus since every $K_i \in \mathbf{K}$ is canonical,
\begin{equation}\label{eq:gamma}
\Gamma = (r-e_\text{row})! (s-e_\text{col})! \left( \prod_{i=1}^k |\mathrm{Stab}(K_i)| \right) \left( \prod_{i=1}^{\ell} k_i! \right).
\end{equation}


The stabilizer of a $(0,1)$-matrix $M=(M_{ij})$ under row and column permutations is isomorphic to the automorphism group of the vertex-colored bipartite graph $G_M$ with vertex set $$\overbrace{\{\mathfrak{r}_1,\ldots,\mathfrak{r}_r\}}^{\text{color } 1} \cup \overbrace{\{\mathfrak{c}_1,\ldots,\mathfrak{c}_s\}}^{\text{color } 2}$$ and edges $\mathfrak{r}_i \mathfrak{c}_j$ if and only if $M_{ij}=1$.


To compute $\#\PLR(r,s,n;m)$ for small $m$, we thus:
\begin{itemize}
 \item Generate a list of possible blocks $K$ with up to $m$ ones, inequivalent under row and column permutations, and compute, for each block, the size of $\mathrm{Aut}(G_K)$, and the chromatic polynomial of $K$.  Table~\ref{ta:blocks} lists the results of this computation for $m \leq 5$.
 \item Iterate through each $\mathbf{K} \in \cup_{k \geq 0} \mathcal{K}_{r,s,m,k}$, computing its contribution to \eqref{eq:PLSchrom} from the table generated in the first step.
\end{itemize}
To further reduce the computation, we only store blocks with no more rows than columns.  This requires the modification of \eqref{eq:gamma} to account for transposing the blocks.  By ordering the set of all blocks, a multiset $\mathbf{K}=\{K_i\}_{i=1}^k$ is equivalent to a unique ordered sequence $(K_i)_{i=1}^k$.  We use a $(0,1)$-sequence $(t_i)_{i=1}^k$ to keep track of which $K_i$ we transpose, with $1$ meaning ``transpose'' and $0$ meaning ``don't transpose''.  We define $(t_i)_{i=1}^k$ as \emph{good} if (a) $t_i=0$ whenever $K_i$ is a square matrix, and (b) if $K_i=K_{i+1}$ and $t_i=0$, then $t_{i+1}=0$.  We choose not to transpose square matrices at this stage, as it adds the task of identifying when the transpose of a matrix can be formed by permuting its rows and columns, hence we have condition (a).  Condition (b) prevents overcounting in cases such as
$$
\begin{array}{|ccc|}
\hline
1 & 0 & 0 \\
1 & 0 & 0 \\
0 & 1 & 1 \\
\hline
\end{array}
\qquad\text{and}\qquad
\begin{array}{|ccc|}
\hline
1 & 1 & 0 \\
0 & 0 & 1 \\
0 & 0 & 1 \\
\hline
\end{array}.
$$
Define $$\overline{K_i}=\begin{cases} K_i & \text{if } t_i=0, \\ (K_i)^T & \text{if } t_i=1. \\  \end{cases}$$  Thus, \eqref{eq:PLSchrom} can be rephrased to give the following theorem.

\begin{theo}\label{th:PLRcomp}
For all $m \geq 1$, we have
$$
\#\PLR(r,s,n;m) = \sum_{k \geq 0} \sum_{ \mathbf{K} \in \mathcal{K}_{r,s,m,k} } \sum_{\substack{(t_i)_{i=1}^k \\ \text{\textup{good}}}} [r]_{e_\text{\textrm{row}}} [s]_{e_\text{\textup{col}}} \frac{\prod_{i=1}^k \Pi(\overline{K_i};n)}{\left(\prod_{i=1}^k |\mathrm{Aut}(G_{K_i})| \right) \left( \prod_{i=1}^{\ell} k_i! \right)}
$$
where $[r]_{e_\text{\textup{row}}}=r!/(r-e_\text{\textup{row}})!$ and $[s]_{e_\text{\textup{col}}}=s!/(s-e_\text{\textup{col}})!$.
\end{theo}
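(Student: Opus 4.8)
The plan is to start from equation~\eqref{eq:PLSchrom}, which already expresses $\#\PLR(r,s,n;m)$ as a sum over multisets of canonical blocks, and to carry out two mechanical substitutions followed by one genuinely combinatorial reduction. First I would substitute the explicit formula for $\Gamma$ from~\eqref{eq:gamma} into~\eqref{eq:PLSchrom}, rewriting $r!/(r-e_{\mathrm{row}})! = [r]_{e_{\mathrm{row}}}$ and $s!/(s-e_{\mathrm{col}})! = [s]_{e_{\mathrm{col}}}$, replacing each $|\mathrm{Stab}(K_i)|$ by $|\mathrm{Aut}(G_{K_i})|$ via the stated isomorphism between the row/column stabilizer of a $(0,1)$-matrix and the automorphism group of its vertex-colored bipartite graph, and factoring $\Pi(M(\mathbf{K});n)=\prod_{i=1}^k \Pi(K_i;n)$. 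This yields an intermediate identity that is exactly the claimed formula except that the blocks $K_i$ range over \emph{all} canonical blocks and there is no transposition sum. This step is purely formal.

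The substantive step is the transposition reduction. The canonical blocks split into strictly wide, square, and strictly tall types; transposing and re-canonicalizing gives a bijection between strictly wide and strictly tall orbits, which are genuinely distinct since their dimensions differ. Restricting to the stored blocks (no more rows than columns) and attaching a bit $t_i\in\{0,1\}$ to each recovers the strictly tall orbits through $t_i=1$ on a strictly wide block, while square blocks are never transposed by virtue of goodness condition~(a). I would then verify that a stored multiset $\mathbf{K}$ together with a good vector $(t_i)$ corresponds to the canonical multiset $\{\overline{K_i}\}$, and that every canonical multiset arises from \emph{exactly one} good pair $(\mathbf{K},(t_i))$; the function of condition~(b) is to select, among the orderings that transpose $j$ of $k_i$ identical copies of a stored block, the single non-increasing representative, which removes the overcounting that would otherwise occur.

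I would then check that every weight is invariant under the transpose map, so that the weight attached to $\{\overline{K_i}\}$ in the intermediate identity equals that attached to $(\mathbf{K},(t_i))$. Concretely, $\Pi(K^T;n)=\Pi(K;n)$ because transposing a matrix induces a graph isomorphism of the associated induced subgraphs under $R_{a,b}\cong R_{b,a}$, so writing $\Pi(\overline{K_i};n)$ is legitimate (and only the dimensions, hence $e_{\mathrm{row}},e_{\mathrm{col}}$, actually change); and $|\mathrm{Aut}(G_{K^T})|=|\mathrm{Stab}(K^T)|=|\mathrm{Stab}(K)|=|\mathrm{Aut}(G_K)|$ through $(\sigma,\tau)\mapsto(\tau,\sigma)$, so keeping $G_{K_i}$ rather than $G_{\overline{K_i}}$ in the denominator is harmless. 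The falling factorials $[r]_{e_{\mathrm{row}}}[s]_{e_{\mathrm{col}}}$ are read off the placed (possibly transposed) blocks and vanish automatically when the dimension constraints~(b),(c) defining $\mathcal{K}_{r,s,m,k}$ fail, so no separate feasibility test is required.

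The delicate point, and the step I expect to be the main obstacle, is the bookkeeping of $\prod_{i=1}^\ell k_i!$ under transposition. Since a strictly wide stored block and its transpose are \emph{distinct} placed submatrices, the multiplicity factor must be the product of factorials of the counts of distinct \emph{placed} blocks $\overline{K_i}$, not of the stored blocks. I would confirm that the good condition is precisely calibrated so that, for $k_i$ copies of a stored block of which $j$ are transposed, the unique surviving good vector carries the factor $j!\,(k_i-j)!$, matching the $\prod k_i!$ of the corresponding canonical multiset; equivalently, $\sum_j \binom{k_i}{j}=2^{k_i}$ reconciles the "all transposition vectors with stored-based factorials" viewpoint with the "good vectors with placed-based factorials" viewpoint. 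Once this factor is verified for strictly wide pairs, squares, and mixtures, summing the intermediate identity over all good pairs $(\mathbf{K},(t_i))$ delivers the theorem.
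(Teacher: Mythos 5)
Your proposal is correct and follows essentially the same route as the paper, whose justification of Theorem~\ref{th:PLRcomp} is precisely the derivation preceding it: substitute \eqref{eq:gamma} into \eqref{eq:PLSchrom}, factor $\Pi(M(\mathbf{K});n)=\prod_{i=1}^k\Pi(K_i;n)$, identify $|\mathrm{Stab}(K_i)|$ with $|\mathrm{Aut}(G_{K_i})|$, and absorb the transposition of strictly wide stored blocks into the good sequences $(t_i)$, with condition (a) exempting square blocks and condition (b) selecting one representative per number of transposed copies in a run. Your ``delicate point'' is resolved the right way, and is in fact essential: for the displayed formula to be exact, $\prod_{i=1}^{\ell}k_i!$ must be read as the product over multiplicities of the distinct \emph{placed} blocks $\overline{K_i}$ (equivalently, one may sum over \emph{all} transposition vectors with stored-block factorials, reconciled by $\binom{k_i}{j}/k_i!=1/(j!\,(k_i-j)!)$), since under the literal stored-block reading a run of $k_i$ equal wide blocks with $j$ transposed would receive weight $1/k_i!$ rather than the correct $1/(j!\,(k_i-j)!)$; likewise your observation that the falling factorials $[r]_{e_\text{row}}[s]_{e_\text{col}}$ police feasibility is the right viewpoint, because a stored multiset can violate the column-sum condition defining $\mathcal{K}_{r,s,m,k}$ even though a partially transposed placement fits, so feasibility must be judged on the placed blocks rather than on $\mathbf{K}$ itself.
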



\section{Sade's method}\label{sec:Sade}

Sade's method \cite{Sade1948a} outstrips all other methods for finding the number of Latin squares \cite{Stones2009b}.  Subsequent authors \cite{BammelRothstein1975,McKayWanless2005,McKayRogoyski1995,Wells1967} who found the number of Latin squares of orders $n\in\{8,9,10,11\}$ implemented optimized computerized versions of Sade's method.  We generalize Sade's method to partial Latin rectangles:

\begin{lemm}
Let $L, M\in\PLR(r,s,n)$.  If
\begin{enumerate}
 \item $L$ and $M$ have the same set of symbols in each column, or
 \item $L$ is isotopic to $M$,
\end{enumerate}
then they can be extended in the same number of ways to $(r+1) \times s$ partial Latin rectangles of a given weight $m$ by adding an $(r+1)$-th row.
\end{lemm}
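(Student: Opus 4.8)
The plan is to reformulate ``extending by an $(r+1)$-th row'' as the choice of a single partial row, and then to show that the number of admissible rows of each prescribed size is an invariant of the data named in each hypothesis. Concretely, extending $L \in \PLR(r,s,n)$ to an $(r+1) \times s$ partial Latin rectangle amounts to choosing a tuple $\rho = (\rho_1,\dots,\rho_s)$ with $\rho_j \in [n] \cup \{\cdot\}$ and appending it as row $r+1$. The appended matrix is again a partial Latin rectangle precisely when (a) the non-empty entries among $\rho_1,\dots,\rho_s$ are pairwise distinct (the row condition for the new row), and (b) for every $j$ with $\rho_j \neq \cdot$, the symbol $\rho_j$ does not already occur in column $j$ of $L$ (the column condition). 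Crucially, there is no constraint coming from rows $1,\dots,r$ of $L$, since row $r+1$ is a fresh row and the Latin condition only forbids repetitions within a single row. Writing $S_j$ for the set of symbols occurring in column $j$ of $L$, condition (b) reads $\rho_j \notin S_j$, so the collection of admissible rows depends on $L$ only through the tuple $(S_1,\dots,S_s)$ together with the purely internal distinctness condition (a). Since within a column of a partial Latin rectangle no symbol repeats, the weight of $L$ equals $\sum_{j=1}^s |S_j|$; hence fixing the final weight $m$ is the same as fixing the number $m - \sum_j |S_j|$ of non-empty cells in $\rho$. Let $\mathcal{E}_m(L)$ denote the set of admissible rows $\rho$ producing total weight $m$.

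For hypothesis (1), suppose $L$ and $M$ have the same set of symbols in each column, i.e.\ $S_j(L) = S_j(M)$ for all $j \in [s]$. Then $L$ and $M$ have the same weight and, by the discussion above, the admissibility conditions (a) and (b) defining $\mathcal{E}_m(L)$ and $\mathcal{E}_m(M)$ are literally the same; thus $\mathcal{E}_m(L) = \mathcal{E}_m(M)$ and the two extension counts coincide. This part is essentially immediate once the reformulation is in place.

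For hypothesis (2), suppose $M = L^{\Theta}$ for an isotopism $\Theta = (\alpha,\beta,\gamma) \in \mathfrak{I}_{r,s,n}$. I would lift $\Theta$ to the isotopism $\widetilde{\Theta} = (\widetilde{\alpha},\beta,\gamma) \in \mathfrak{I}_{r+1,s,n}$, where $\widetilde{\alpha} \in S_{r+1}$ agrees with $\alpha$ on $[r]$ and fixes $r+1$. The key computation is that for any extension $\widetilde{L}$ of $L$ with new row $\rho$, the image $\widetilde{L}^{\widetilde{\Theta}}$ has its first $r$ rows equal to $M$ (because $\widetilde{\alpha}$ permutes $[r]$ by $\alpha$ and $E(L^{\Theta}) = E(M)$) and its $(r+1)$-th row equal to the row obtained from $\rho$ by relabeling columns via $\beta$ and symbols via $\gamma$ (because $\widetilde{\alpha}$ fixes $r+1$). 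Hence $\widetilde{\Theta}$ restricts to a map sending extensions of $L$ to extensions of $M$; since $\widetilde{\Theta}$ is an invertible isotopism it is a bijection, and since isotopisms act bijectively on entry sets it preserves weight. Therefore it maps $\mathcal{E}_m(L)$ bijectively onto $\mathcal{E}_m(M)$ for every $m$, giving the claimed equality of counts.

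The one step that requires care---the main obstacle---is the bookkeeping in hypothesis (2): verifying that the lifted row permutation $\widetilde{\alpha}$ genuinely decouples the old rows from the new one, so that applying $\widetilde{\Theta}$ turns an extension of $L$ into an extension of $M$ (and not merely into some partial Latin rectangle whose rows have been shuffled). This reduces to checking, at the level of entry sets, that the contribution of $E(L)$ maps to $E(M)$ and stays within rows $[r]$, while the contribution of the new row stays in row $r+1$; I would carry this out by splitting $E(\widetilde{L}) = E(L) \cup \{(r+1,j,\rho_j) : \rho_j \neq \cdot\}$ and applying the definition of the isotopic action to each part. Everything else is direct.
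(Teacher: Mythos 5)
Your proof is correct and takes essentially the same approach as the paper: case (1) is the observation that the set of admissible $(r+1)$-th rows depends on $L$ only through its column symbol sets, and case (2) applies the isotopism (lifted to an element of $\mathfrak{I}_{r+1,s,n}$ fixing the new row index) to biject extensions of $L$ with extensions of $M$. Your write-up simply makes explicit two details the paper's two-sentence proof leaves implicit---that equal column sets force equal weight (so ``weight $m$'' selects the same rows for both), and the entry-set bookkeeping for the lifted permutation $\widetilde{\alpha}$.
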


\begin{proof}
In the first case, the possible $(r+1)$-th rows for $L$ and $M$ are the same.  In the second case, if $M=L^{\Theta}$, then the possible $(r+1)$-th rows of $L^{\Theta}$ are precisely the possible $(r+1)$-th rows of $L$ after applying $\Theta$.
\end{proof}

Let $L, M\in\PLR(r,s,n)$. We say that $L$ and $M$ are \emph{Sade equivalent} if $L$ is isotopic to a partial Latin rectangle $L'$ such that the columns of $L'$ and $M$ have the same sets of symbols. Particularly, $L$ and $M$ must have the same weight. Thus, for instance, the following four partial Latin rectangles in $\PLR(2,3,3;4)$ are Sade equivalent.
\[
\begin{array}{|ccc|}
\hline
1 & 3 & 2 \\
\cdot & 2 & \cdot \\
\hline
\end{array},
\qquad
\begin{array}{|ccc|}
\hline
3 & 1 & 2 \\
2 & \cdot & \cdot \\
\hline
\end{array},
\qquad
\begin{array}{|ccc|}
\hline
1 & 3 & 2 \\
2 & \cdot & \cdot \\
\hline
\end{array}
\qquad \text{and} \qquad
\begin{array}{|ccc|}
\hline
2 & 3 & \cdot \\
1 & \cdot & 2 \\
\hline
\end{array}.
\]

Practically, we need a fast method for checking whether a large number of partial Latin rectangles are Sade equivalent.  To this end, for each partial Latin rectangle $L\in\PLR(r,s,n)$, we perform the following steps, which we illustrate for this example $\PLR(3,4,3;6)$:
\begin{equation}\label{eq:examPLR}
\begin{array}{|cccc|}
\hline
1 & \cdot & 2 & \cdot \\
2 & 1 & 3 & \cdot \\
\cdot & \cdot & \cdot & 3 \\
\hline
\end{array}.
\end{equation}
\begin{enumerate}
 \item Construct a vertex-colored bipartite graph with vertex set $$\overbrace{\{\mathfrak{c}_1,\ldots,\mathfrak{c}_s\}}^{\text{color } 1} \cup \overbrace{\{\mathfrak{n}_1,\ldots,\mathfrak{n}_n\}}^{\text{color } 2}$$ and edge set $$\{\mathfrak{c}_i \mathfrak{n}_j:\text{symbol } j \text{ occurs in column } i \text{ in } L\}.$$  In our running example \eqref{eq:examPLR}, we obtain:

\begin{center}
\begin{tikzpicture}[rotate=-90,scale=1.5]
\tikzstyle{vertex}=[draw,thick,circle,fill=black,minimum size=10pt,inner sep=0pt]
\tikzstyle{vertex2}=[draw,thick,circle,fill=white,minimum size=10pt,inner sep=0pt]

\draw node[vertex,label={$\mathfrak{c}_1$}] (c1) at (0,0) {};
\draw node[vertex,label={$\mathfrak{c}_2$}] (c2) at (0,0.5) {};
\draw node[vertex,label={$\mathfrak{c}_3$}] (c3) at (0,1) {};
\draw node[vertex,label={$\mathfrak{c}_4$}] (c4) at (0,1.5) {};

\draw node[vertex2,label=below:{$\mathfrak{n}_1$}] (s1) at (1,0.25) {};
\draw node[vertex2,label=below:{$\mathfrak{n}_2$}] (s2) at (1,0.75) {};
\draw node[vertex2,label=below:{$\mathfrak{n}_3$}] (s3) at (1,1.25) {};

\draw (c1) -- (s1);
\draw (c1) -- (s2);

\draw (c2) -- (s1);

\draw (c3) -- (s2);
\draw (c3) -- (s3);

\draw (c4) -- (s3);
\end{tikzpicture}
\end{center}

 \item Canonically label the graph in a way that preserves the vertex colors (to this end we use \texttt{nauty} \cite{nauty}, for which such a labeling is an internal procedure).  In our running example, we obtain:
\begin{center}
\begin{tikzpicture}[rotate=-90,scale=1.5]
\tikzstyle{vertex}=[draw,thick,circle,fill=black,minimum size=10pt,inner sep=0pt]
\tikzstyle{vertex2}=[draw,thick,circle,fill=white,minimum size=10pt,inner sep=0pt]

\draw node[vertex,label={$\mathfrak{c}_1$}] (v0) at (0,0) {};
\draw node[vertex,label={$\mathfrak{c}_2$}] (v1) at (0,0.5) {};
\draw node[vertex,label={$\mathfrak{c}_3$}] (v2) at (0,1) {};
\draw node[vertex,label={$\mathfrak{c}_4$}] (v3) at (0,1.5) {};

\draw node[vertex2,label=below:{$\mathfrak{n}_1$}] (v4) at (1,0.25) {};
\draw node[vertex2,label=below:{$\mathfrak{n}_2$}] (v5) at (1,0.75) {};
\draw node[vertex2,label=below:{$\mathfrak{n}_3$}] (v6) at (1,1.25) {};

\draw (v0) -- (v4);
\draw (v1) -- (v5);
\draw (v2) -- (v4);
\draw (v2) -- (v6);
\draw (v3) -- (v5);
\draw (v3) -- (v6);
\end{tikzpicture}
\end{center}
 \item Find the submatrix of the adjacency matrix formed by the rows indexed by $\{\mathfrak{c}_i\}_{i=1}^s$ and columns indexed $\{\mathfrak{n}_i\}_{i=1}^n$, and read it as a binary number.  We call the result the \emph{Sade number}, denoted $sn_L$, of the partial Latin rectangle $L$.  In our running example, we obtain:

\[\begin{array}{|ccccccc|}
\hline
0 & 0 & 0 & 0 & \cellcolor{lightgray} 1 & \cellcolor{lightgray} 0 & \cellcolor{lightgray} 0 \\
0 & 0 & 0 & 0 & \cellcolor{lightgray} 0 & \cellcolor{lightgray} 1 & \cellcolor{lightgray} 0 \\
0 & 0 & 0 & 0 & \cellcolor{lightgray} 1 & \cellcolor{lightgray} 0 & \cellcolor{lightgray} 1 \\
0 & 0 & 0 & 0 & \cellcolor{lightgray} 0 & \cellcolor{lightgray} 1 & \cellcolor{lightgray} 1 \\
1 & 0 & 1 & 0 & 0 & 0 & 0 \\
0 & 1 & 0 & 1 & 0 & 0 & 0 \\
0 & 0 & 1 & 1 & 0 & 0 & 0 \\
\hline
\end{array}\]
which gives the Sade number of \eqref{eq:examPLR} as $100010101011$ in binary, or $2219$ in decimal.
\end{enumerate}

We describe how to implement Sade's method for partial Latin rectangles in Algorithm~\ref{al:Sade}.  Along with a partial Latin rectangle $L$ itself, we store its Sade number $sn_L$ and the number of Sade equivalent partial Latin rectangles, which we call the \emph{Sade multiplier} $sm_L$.  For each $i \in \{0,1,\ldots,s\}$, we maintain a database of Sade inequivalent $i \times s$ partial Latin rectangles $PLRs[i]$.  We compute $PLRs[i]$ by extending the partial Latin rectangles in $PLRs[i-1]$ in all possible ways, then filtering out Sade equivalent partial Latin rectangles.

\begin{algorithm}
\caption{Sade's method for partial Latin rectangles}
\label{al:Sade}
\begin{algorithmic}[1]
\State{Set $PLRs[0]=\langle (L^0,0,1) \rangle$ where $L^0$ is the $0 \times s$ partial Latin rectangle}
\For{$i$ from $1$ to $s$}
  \State{Set $PLRs[i]=\langle \rangle$}
  \ForAll{$(L,sn_L,sm_L) \in PLRs[i-1]$}
    \ForAll{extensions $L^{\text{ext}}$ of $L$ to an $i \times s$ partial Latin rectangle}
      \State{Compute its Sade number $sn_{L^{\text{ext}}}$}
      \State{Binary search for $sn_{L^{\text{ext}}}$ among the Sade numbers in $PLRs[i]$}
      \If{$sn_{L^{\text{ext}}}$ is found}
        \State{We have $sn_{L^{\text{ext}}}=sn_M$ for some $(M,sn_M,sm_M) \in PLRs[i]$}
        \State{Increase $sm_M$ by $sm_L$}
      \Else
        \State{Insert $(L^{\text{ext}},sn_{L^{\text{ext}}},sm_L)$ into $PLRs[i]$}
      \EndIf
    \EndFor
  \EndFor
\EndFor
\end{algorithmic}
\end{algorithm}

Importantly, $PLRs[i]$ is sorted according to Sade numbers.  This enables the use of binary search when checking for equivalent partial Latin rectangles, thereby greatly reducing the number of pairwise equivalence comparisons we need to make.  We iterate through extensions of partial Latin rectangles by use of a backtracking algorithm.

Other practical improvements can be made:
\begin{itemize}
 \item In enumerating up to $8 \times 8$ partial Latin rectangles, the Sade number will be less than $2^{64}$, which can thus be stored as $64$-bit unsigned integers.
 \item When processing the last few rows, we can forgo Sade's method and instead use a simple backtracking algorithm to count the number of extensions up to completions of each partial Latin rectangle.
 \item Although $\#\PLR(r,s,n;m)=\#\PLR(r,n,s;m)$, it is significantly faster to compute the value of $\#\PLR(r,s,n;m)$ when $s \leq n$.
\end{itemize}

\section{Algebraic geometry method}\label{sec:alg_geo}

In this section, we review how sets of partial Latin rectangles are identified with the algebraic sets of certain ideals. This follows the idea of Bayer \cite{Bayer1982} and Adams and Loustaunau \cite{Adams1994} to solve the problem of $n$-coloring a graph by means of algebraic geometry, since every Latin square of order $n$ is equivalent to an $n$-colored bipartite graph $K_{n,n}$ \cite{Laywine1998}. Much more recently, this algebraic method has been adapted to solve sudokus \cite{Arnold2010,Gago2006,Sato2011}, enumerate quasigroup rings derived from partial Latin squares \cite{FalconJCAM2017}, enumerate partial Latin rectangles that admit a given autotopism \cite{Falcon2013,Falcon2015,Falcon2018,FalconMartinMorales2007} or autoparatopism \cite{FalconStones2017}, and also the number of isotopisms between two given partial Latin rectangles \cite{FalconStones2015}, thereby enabling us to compute in Section~\ref{sec:equiv} the numbers of equivalence classes by means of the Orbit-Stabilizer Theorem and Burnside's Lemma.  See \cite{Cox2007, Kreuzer2000} for more details on algebraic geometry.

Let $\mathbb{K}[{\bf x}]=\mathbb{K}[x_1,\ldots,x_n]$ be a polynomial ring in $n$ variables over a field $\mathbb{K}$.  An {\em ideal} of $\mathbb{K}[{\bf x}]$ is any subset $I\subseteq \mathbb{K}[{\bf x}]$ that (a) contains the zero polynomial; (b) is closed under polynomial addition; and (c) is closed under multiplication by polynomials in $q\in \mathbb{K}[{\bf x}]$.  The ideal generated by a finite set of polynomials $\{p_1,\ldots,p_m\} \subseteq \mathbb{K}[{\bf x}]$ is defined as
\[\langle\, p_1,\ldots,p_m \,\rangle := \left\{p\in \mathbb{K}[{\bf x}]\colon p=\textstyle\sum_{i=1}^m q_ip_i,\, \text{ where } q_i\in \mathbb{K}[{\bf x}], \text{ for all } i\leq m\right\}.\]
The {\em algebraic set} of $I$ is the set of points
\[\mathcal{V}(I):=\{(a_1,\ldots,a_n)\in k^n \colon p(a_1,\ldots,a_n)=0, \text{ for all } p\in I\}.\]

For this paper, interest in this topic arises from the polynomial ring $\mathbb{Q}[\mathbf{x}]=\mathbb{Q}[x_{111},\ldots,$ $x_{rsn}]$ and the ideal
\begin{align*}
I_{r,s,n;m} := {} & \langle\,x_{ijk}^2-x_{ijk} \colon  (i,j,k)\in [r]\times [s]\times [n]\,\rangle\\
      & +\langle\, x_{ijk}x_{i'jk}\colon (i,j,k)\in [r]\times [s]\times [n] ,\ i'\in [r],\ i<i'\,\rangle\\
      & +\langle\, x_{ijk}x_{ij'k}\colon (i,j,k)\in [r]\times [s]\times [n] ,\ j'\in [s],\ j<j'\,\rangle\\
      & +\langle\, \,x_{ijk}x_{ijk'}\colon (i,j,k)\in [r]\times [s]\times [n] ,\ k'\in [n],\ k<k'\,\rangle\\
      & +\langle\, \,m-\sum_{i \in [r]} \sum_{j \in [s]} \sum_{k \in [n]}x_{ijk}\,\rangle.
\end{align*}

There is a bijection between partial Latin rectangles $L=(l_{ij})\in \PLR(r,s,n;m)$ and elements of the algebraic set of $I_{r,s,n;m}$: we have $l_{ij}=k$ whenever $x_{ijk}=1$, and $l_{ij}$ is undefined otherwise.  More specifically:
\begin{itemize}
 \item Having $x_{ijk}^2-x_{ijk}=0$ implies that the algebraic set is contained in $\{0,1\}^{rsn}$.
 \item Having $x_{ijk}x_{i'jk}=0$ implies that the symbol $k$ does not appear twice in the column $j$.
 \item Having $x_{ijk}x_{ij'k}=0$ implies that the symbol $k$ does not to appear twice in the row $i$.
 \item Having $x_{ijk}x_{ijk'}=0$ implies that there is at most one symbol in the cell $(i,j)$.
 \item Having $\sum_{i \in [r]} \sum_{j \in [s]} \sum_{k \in [n]} x_{ijk}=m$ implies that the weight of the partial Latin rectangle $L$ is $m$.
\end{itemize}
Since the algebraic set $\mathcal{V}(I_{r,s,n;m})$ is finite and the ideal $I_{r,s,n;m}\cap\mathbb{Q}[x_{ijk}]$ is generated by the polynomial $x_{ijk}^2-x_{ijk}$, which is contained in $I_{r,s,n;m}$, Seidenberg's Lemma and \cite[Theorem 3.7.19]{Kreuzer2000} imply $$\#\PLR(r,s,n;m)=|\mathcal{V}(I_{r,s,n;m})|=\mathrm{dim}_{\mathbb{Q}}\big(\mathbb{Q}[{\bf x}]/I_{r,s,n;m}\big).$$
This fact has recently been used in \cite{Falcon2018} to compute $\#\PLR(r,s,n;m)$, for all $r,s,n\leq 6$ and $m\leq rs$.

This algebraic geometry enumeration method can be generalized to include cases in which a certain autoparatopism is imposed as follows.

\begin{theo}\label{theoPLR}
Let $\Theta=(\delta_1,\delta_2,\delta_3)\in\mathfrak{I}_{r,s,n}$ and $\pi\in S_3$. Define
\[I_{(\Theta,\pi);m}:= I_{r,s,n;m} + \langle\,x_{i_1 i_2 i_3} - x_{\delta_{\pi(1)}(i_{\pi(1)})\delta_{\pi(2)}(i_{\pi(2)})\delta_{\pi(3)}(i_{\pi(3)})}\colon i_1\in [r],\, i_2\in [s],\, i_3\in [n]\,\rangle.\]
Then, the set $\PLR((\Theta,\pi);m)$ has a natural bijection with $\mathcal{V}(I_{(\Theta,\pi);m})$ and $$\#\PLR((\Theta,\pi);m)= \mathrm{dim}_{\mathbb{Q}}(\mathbb{Q}[{\bf x}]/I_{(\Theta,\pi);m}).$$
\end{theo}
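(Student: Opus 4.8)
The plan is to bootstrap from the bijection $\PLR(r,s,n;m)\leftrightarrow\mathcal{V}(I_{r,s,n;m})$ already established in this section, under which a partial Latin rectangle $L$ corresponds to the $0/1$ point $(x_{ijk})$ with $x_{ijk}=1$ exactly when $(i,j,k)\in E(L)$, and under which $\#\PLR(r,s,n;m)=|\mathcal{V}(I_{r,s,n;m})|=\dim_{\mathbb{Q}}(\mathbb{Q}[\mathbf{x}]/I_{r,s,n;m})$ via Seidenberg's Lemma and \cite[Theorem 3.7.19]{Kreuzer2000}. Under this dictionary, admitting the autoparatopism $(\Theta,\pi)$ is the closed condition $L^{(\Theta,\pi)}=L$, and the whole statement splits into a geometric part (the bijection becomes an equality of algebraic sets) and a commutative-algebra part (the cardinality equals the dimension of the quotient).

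First I would unwind the definition $L^{(\Theta,\pi)}=(L^{\pi})^{\Theta}$ at the level of entries. Starting from $(p_1,p_2,p_3)\in E(L)$, applying $\pi$ and then $\Theta=(\delta_1,\delta_2,\delta_3)$ yields one triple, and collecting these over all entries exhibits $E(L^{(\Theta,\pi)})$ as the image of $E(L)$ under an explicit bijection $\tau$ of the index set $[r]\times[s]\times[n]$; here one uses that $\pi$ stabilises $(r,s,n)$, so the dimensions match and $\tau$ is genuinely a permutation of indices. The condition $L^{(\Theta,\pi)}=L$ becomes $\tau(E(L))=E(L)$, i.e.\ $E(L)$ is $\tau$-invariant, which in indicator form reads $x_{i_1i_2i_3}=x_{\tau(i_1,i_2,i_3)}$ for every triple. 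Matching $\tau$ with the index substitution $\bigl(\delta_{\pi(1)}(i_{\pi(1)}),\delta_{\pi(2)}(i_{\pi(2)}),\delta_{\pi(3)}(i_{\pi(3)})\bigr)$ in the stated generators is the crux: it requires tracking how the components of $\Theta$ are re-indexed when $\pi$ acts first (the semidirect-product twist in $\mathfrak{P}_{r,s,n}=\mathfrak{I}_{r,s,n}\rtimes S_{r,s,n}$). Once this substitution is identified with $\tau$ (equivalently $\tau^{-1}$, which generates the same ideal because the equations range over all triples), the invariance condition and the extra generators define the same linear variety.

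Next I would assemble the geometric statement. Writing $I_{(\Theta,\pi);m}=I_{r,s,n;m}+J$, with $J$ generated by the linear forms above, a point lies in $\mathcal{V}(I_{(\Theta,\pi);m})$ iff it lies in $\mathcal{V}(I_{r,s,n;m})$ and additionally satisfies $x_{i}=x_{\tau(i)}$ for all $i$. By the previous step this intersection is exactly the image, under the established dictionary, of those $L$ with $L^{(\Theta,\pi)}=L$, that is, of $\PLR((\Theta,\pi);m)$; this produces the claimed natural bijection $\PLR((\Theta,\pi);m)\leftrightarrow\mathcal{V}(I_{(\Theta,\pi);m})$.

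Finally, for the dimension formula I would rerun verbatim the mechanism already invoked for $I_{r,s,n;m}$. Since $I_{(\Theta,\pi);m}\supseteq I_{r,s,n;m}$, it still contains every $x_{ijk}^2-x_{ijk}$; these are squarefree univariate polynomials, one in each variable, so Seidenberg's Lemma shows $I_{(\Theta,\pi);m}$ is radical, while its variety is finite, being contained in $\mathcal{V}(I_{r,s,n;m})\subseteq\{0,1\}^{rsn}$. Hence \cite[Theorem 3.7.19]{Kreuzer2000} applies and gives $\dim_{\mathbb{Q}}(\mathbb{Q}[\mathbf{x}]/I_{(\Theta,\pi);m})=|\mathcal{V}(I_{(\Theta,\pi);m})|$, and since every solution has coordinates in $\{0,1\}\subseteq\mathbb{Q}$ no points are lost under passing to the algebraic closure. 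Combining this with the bijection yields $\#\PLR((\Theta,\pi);m)=\dim_{\mathbb{Q}}(\mathbb{Q}[\mathbf{x}]/I_{(\Theta,\pi);m})$. I expect the only genuine obstacle to be the bookkeeping in the first step—correctly composing the parastrophism $\pi$ with the isotopism $\Theta$ and reconciling the resulting entry map $\tau$ with the subscripts $\delta_{\pi(\cdot)}$ in the generators—whereas the algebraic part is a direct repetition of the argument already used for $I_{r,s,n;m}$.
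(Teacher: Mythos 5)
Your proposal is correct and follows essentially the same route as the paper: reuse the bijection $\PLR(r,s,n;m)\leftrightarrow\mathcal{V}(I_{r,s,n;m})$, observe that the added linear generators $x_{i_1i_2i_3}-x_{\delta_{\pi(1)}(i_{\pi(1)})\delta_{\pi(2)}(i_{\pi(2)})\delta_{\pi(3)}(i_{\pi(3)})}$ cut out exactly the fixed points of the paratopism action, and then invoke Seidenberg's Lemma together with Theorem~3.7.19 of Kreuzer--Robbiano for the dimension count. If anything, your version is more careful than the paper's own proof, which only argues the direction ``point of $\mathcal{V}(I_{(\Theta,\pi);m})$ $\Rightarrow$ autoparatopism'' and leaves both the converse and the semidirect-product bookkeeping (which you rightly flag as the crux) implicit.
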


\begin{proof}
Since $\mathcal{V}(I_{(\Theta,\pi);m})\subseteq \mathcal{V}(I_{r,s,n;m})$, any point in $\mathcal{V}(I_{(\Theta,\pi);m})$ corresponds to a partial Latin rectangle $L=(l_{i_1i_2})\in\PLR(r,s,n;m)$ as previously described. The addition of the polynomials $x_{i_1i_2i_3} - x_{\delta_{\pi(1)}(i_{\pi(1)})\delta_{\pi(2)}(i_{\pi(2)})\delta_{\pi(3)}(i_{\pi(3)})}$ implies that if $l_{i_1i_2}$ is defined and $l_{i_1i_2}=i_3$, then $l_{\delta_{\pi(1)}(i_{\pi(1)})\delta_{\pi(2)}(i_{\pi(2)})}=\delta_{\pi(3)}(i_{\pi(3)})$.  Thus, $(\Theta,\pi)$ is an autoparatopism of $L$.
\end{proof}

A similar approach enables us to determine the set of isotopisms between two partial Latin rectangles as follows.

\begin{theo} \label{theoPLR2} Let $\mathbb{Q}[{\bf x}]=\mathbb{Q}[x_{11},$ $\ldots,x_{rr},y_{11},\ldots,y_{ss},z_{11},\ldots,z_{nn}]$ be a polynomial ring in $r^2+s^2+n^2$ variables. The set $\mathfrak{I}(P,Q)$ of isotopisms between two partial Latin rectangles $P=(p_{ij})$ and $Q=(q_{ij})$ in $\PLR(r,s,n)$ has a natural bijection with the algebraic set of the ideal

\begin{align*}
I_{P,Q}= & \langle\,x_{ij}^2-x_{ij}\colon i,j \in [r]\,\rangle + \langle\,y_{ij}^2-y_{ij}\colon i,j \in [s]\,\rangle + \langle\,z_{ij}^2-z_{ij}\colon i,j \in [n]\,\rangle \\ & + \langle\,1-\sum_{j\in [r]}x_{ij}\colon i\in [r]\,\rangle + \langle\,1-\sum_{i\in [r]}x_{ij}\colon j\in [r]\,\rangle \\ & + \langle\,1-\sum_{j\in [s]}y_{ij}\colon i\in [s]\,\rangle + \langle\,1-\sum_{i\in [s]}y_{ij}\colon j\in [s]\,\rangle \\ & + \langle\,1-\sum_{j\in [n]}z_{ij}\colon i\in [n]\,\rangle +\langle\,1-\sum_{i\in [n]}z_{ij}\colon j\in [n]\,\rangle \\ & + \langle\,x_{ik} y_{jl}(z_{p_{ij}q_{kl}}-1)\colon i,k\in [r],\ j,l\in [s],\text{ such that } p_{ij}, q_{kl}\in [n]\,\rangle \\ & + \langle\,x_{ik}y_{jl} \colon i,k\in [r],\ j,l \in [s],\ p_{ij} \text{ and/or } q_{kl} \text{ is undefined}\,\rangle.
\end{align*}
Consequently, the number of isotopisms from $P$ to $Q$ is given by \[\#\mathfrak{I}(P,Q)= \mathrm{dim}_{\mathbb{Q}}(\mathbb{Q}[{\bf x}]/I_{P,Q}).\]
\end{theo}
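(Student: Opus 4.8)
The plan is to mirror the argument already used in this section for the ideal $I_{r,s,n;m}$: first establish a set-theoretic bijection between $\mathfrak{I}(P,Q)$ and the algebraic set $\mathcal{V}(I_{P,Q})$, and then upgrade it to the dimension formula by invoking Seidenberg's Lemma together with \cite[Theorem 3.7.19]{Kreuzer2000}, exactly as before. The genuinely new content lies entirely in identifying $\mathcal{V}(I_{P,Q})$ with the set of isotopisms from $P$ to $Q$.

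First I would decode the three variable blocks. The generators $x_{ij}^2-x_{ij}$ confine the $x$-coordinates to $\{0,1\}$, and the two families $1-\sum_{j}x_{ij}$ and $1-\sum_{i}x_{ij}$ force each row and each column of $(x_{ij})$ to contain exactly one $1$; hence $(x_{ij})$ is a permutation matrix, which I read as a permutation $\alpha\in S_r$ via $x_{ij}=1 \iff \alpha(i)=j$. The identical treatment of the $y$- and $z$-blocks yields $\beta\in S_s$ and $\gamma\in S_n$. Thus the points of $\{0,1\}^{r^2+s^2+n^2}$ satisfying just these generators are in natural bijection with the triples $\Theta=(\alpha,\beta,\gamma)\in\mathfrak{I}_{r,s,n}$.

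The heart of the proof is to show that, for such a $\Theta$, the two coupling families vanish precisely when $P^\Theta=Q$. Fix a cell $(i,j)\in[r]\times[s]$ and write $k=\alpha(i)$, $l=\beta(j)$, so that $x_{ik}=y_{jl}=1$ while every other product $x_{ik'}y_{jl'}$ vanishes. For this surviving pair, the generator $x_{ik}y_{jl}(z_{p_{ij}q_{kl}}-1)$ forces $z_{p_{ij}q_{kl}}=1$, i.e.\ $\gamma(p_{ij})=q_{kl}$, whenever both $p_{ij}$ and $q_{kl}$ are defined, while the generator $x_{ik}y_{jl}$ rules out the configuration in which exactly one of $p_{ij},q_{kl}$ is defined. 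Reading these conditions cell by cell says exactly that $\Theta$ carries each entry of $P$ to an entry of $Q$ with symbols matched through $\gamma$ and carries empty cells to empty cells, that is, $P^\Theta=Q$. Conversely, every isotopism from $P$ to $Q$ produces permutation matrices satisfying all the generators, completing the bijection $\mathfrak{I}(P,Q)\leftrightarrow\mathcal{V}(I_{P,Q})$.

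The step I expect to be most delicate is the bookkeeping of the undefined cells in the two coupling families: one must check that the empty-to-empty case is left unconstrained, so that genuine isotopisms are not discarded, while the mixed defined/undefined case is excluded, and one should note that, since $\alpha\times\beta$ is a bijection of all $rs$ cells that preserves the defined/undefined status, this forces $\mathrm{wt}(P)=\mathrm{wt}(Q)$, so that $\mathcal{V}(I_{P,Q})=\emptyset$ correctly matches $\mathfrak{I}(P,Q)=\emptyset$ when the weights differ. Once the bijection is in hand the dimension formula is immediate: the algebraic set is finite, being contained in $\{0,1\}^{r^2+s^2+n^2}$, and each of $x_{ij}^2-x_{ij}$, $y_{ij}^2-y_{ij}$, $z_{ij}^2-z_{ij}$ is a squarefree univariate polynomial lying in $I_{P,Q}$, so Seidenberg's Lemma makes $I_{P,Q}$ radical and \cite[Theorem 3.7.19]{Kreuzer2000} yields $\#\mathfrak{I}(P,Q)=|\mathcal{V}(I_{P,Q})|=\mathrm{dim}_{\mathbb{Q}}(\mathbb{Q}[\mathbf{x}]/I_{P,Q})$.
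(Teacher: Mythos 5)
Your proposal is correct and follows essentially the same route as the paper's own proof: decode the first nine subideals as permutation matrices yielding $(\alpha,\beta,\gamma)\in\mathfrak{I}_{r,s,n}$, interpret the two coupling families as forcing $\Theta$ to be a bijection between the entry sets $E(P)$ and $E(Q)$, and conclude via Seidenberg's Lemma together with \cite[Theorem 3.7.19]{Kreuzer2000}. The only difference is that you spell out the undefined-cell bookkeeping that the paper compresses into a single sentence; note that your reading of the last generator family as indexed by pairs where \emph{exactly} one of $p_{ij}$, $q_{kl}$ is undefined (so that empty-to-empty pairs are unconstrained) is the reading required for the theorem to hold, since a literal inclusive ``and/or'' would place $x_{ik}y_{jl}$ in the ideal for doubly-undefined pairs and force $\mathcal{V}(I_{P,Q})=\emptyset$ whenever $P$ has an empty cell, a point the paper's proof also passes over silently.
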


\begin{proof} The first three subideals of $I_{P,Q}$ imply $\mathcal{V}(I_{P,Q})\subseteq \{0,1\}^{r^2+s^2+n^2}$. Every isotopism $\Theta=(\alpha,\beta,\gamma)\in\mathfrak{I}_{r,s,n}$ uniquely corresponds to a zero $(x^{\Theta}_{11},\ldots,x^{\Theta}_{rr},$ $y^{\Theta}_{11},\ldots, y^{\Theta}_{ss},z^{\Theta}_{11},\ldots,z^{\Theta}_{nn})$ in $\mathcal{V}(I_{P,Q})$, where $x^{\Theta}_{ij}=1$, (resp., $y^{\Theta}_{ij}=1$ and $z^{\Theta}_{ij}=1$) if $\alpha(i)=j$ (resp., $\beta(i)=j$ and $\gamma(i)=j$) and $0$, otherwise. Specifically, the fourth and fifth subideals of $I_{P,Q}$ (in the statement of Theorem~\ref{theoPLR2}) imply $\alpha$ is a permutation of $S_r$ (the fourth one ensures the injectivity, while the fifth one ensures surjectivity). Similarly, the next two pairs of subideals imply $\beta$ and $\gamma$ are permutations of $S_s$ and $S_n$, respectively, and hence, $\Theta$ is an isotopism of $\PLR(r,s,n)$. The last two subideals imply $\Theta$ is a bijection between the entry sets $E(P)$ and $E(Q)$. Further, since $I_{P,Q}\cap\mathbb{Q}[x_{ijk}]=\langle\,x_{ijk}^2-x_{ijk}\,\rangle \subseteq I_{P,Q}$, Seidenberg's Lemma and Theorem 3.7.19 in \cite{Kreuzer2000} imply the theorem statement.
\end{proof}

\section{Counting equivalence classes of partial Latin rectangles}\label{sec:equiv}

Theorem \ref{theoPLR2} can be used to determine not only the size of the autotopism group $\mathfrak{I}(P,P)$ of a partial Latin rectangle $P$, but also that of its autoparatopism group $\mathfrak{P}(P,P)$, because
\[\#\mathfrak{P}(P,P)=\sum_{\pi\in S_3}\#\mathfrak{I}(P,P^{\pi}).\]
The following result shows how the computation of both values enables us to determine the size of the isotopism and main classes containing $P$ by means of the Orbit-Stabilizer Theorem.

\begin{theo}\label{th:isomisot2} Let $P\in\PLR(r,s,n)$. Then,
\begin{enumerate}
\item the number of partial Latin rectangles that are isotopic to $P$, i.e., the size of the isotopism class containing $P$, is
$$\frac {r!\ s!\ n!}{\#\mathfrak{I}(P,P)};$$
\item the number of partial Latin rectangles that are paratopic to $P$, i.e., the size of the main class containing $P$, is
$$\# S_{r,s,n}\ \frac {r!\ s!\ n!}{\#\mathfrak{P}(P,P)};$$ and
\item the number of isotopism classes in the main class of $P$ is
$$\# S_{r,s,n}\ \frac{\#\mathfrak{I}(P,P)}{\#\mathfrak{P}(P,P)}.$$
\end{enumerate}
\end{theo}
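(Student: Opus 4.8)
The plan is to obtain all three statements from the Orbit--Stabilizer Theorem applied to the two group actions introduced in Section~\ref{sec:intro}, after identifying each equivalence class with an orbit and each ``auto-'' group with a point stabilizer. For part~1, the isotopism class of $P$ is exactly the orbit of $P$ under $\mathfrak{I}_{r,s,n}=S_r\times S_s\times S_n$, whose order is $r!\,s!\,n!$, and the stabilizer of $P$ is by definition the autotopism group, which is precisely $\mathfrak{I}(P,P)$ in the notation of Theorem~\ref{theoPLR2}. Orbit--Stabilizer then gives the orbit size $r!\,s!\,n!/\#\mathfrak{I}(P,P)$ immediately.

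For part~2, I would repeat this with the paratopism group $\mathfrak{P}_{r,s,n}=\mathfrak{I}_{r,s,n}\rtimes S_{r,s,n}$. The only new ingredient is its order: since the action is a semidirect product, $\#\mathfrak{P}_{r,s,n}=\#\mathfrak{I}_{r,s,n}\cdot\#S_{r,s,n}=\#S_{r,s,n}\,r!\,s!\,n!$. The main class of $P$ is the $\mathfrak{P}_{r,s,n}$-orbit of $P$ (this is well defined on $\PLR(r,s,n;m)$ precisely because $S_{r,s,n}$ was taken to be the stabilizer of the triple $(r,s,n)$), and its stabilizer is the autoparatopism group $\mathfrak{P}(P,P)$. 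Orbit--Stabilizer then yields the claimed size $\#S_{r,s,n}\,r!\,s!\,n!/\#\mathfrak{P}(P,P)$.

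Part~3 is where the real work lies, and it is the step I expect to be the main obstacle. The main class is partitioned into isotopism classes, so if these all had the same cardinality the count would just be (size of main class)/(size of an isotopism class), which simplifies to the claimed $\#S_{r,s,n}\,\#\mathfrak{I}(P,P)/\#\mathfrak{P}(P,P)$. The nontrivial point is to justify that all isotopism classes inside a single main class have equal size. I would do this using the fact, noted in Section~\ref{sec:intro}, that $\mathfrak{I}_{r,s,n}$ is a \emph{normal} subgroup of $\mathfrak{P}_{r,s,n}$: if $P'=P^{g}$ for some paratopism $g$, then an element $h\in\mathfrak{I}_{r,s,n}$ fixes $P'$ iff $ghg^{-1}$ fixes $P$, and normality keeps $ghg^{-1}$ inside $\mathfrak{I}_{r,s,n}$, so $\mathfrak{I}(P',P')$ is a conjugate of $\mathfrak{I}(P,P)$ and in particular has the same order. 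By part~1 every isotopism class in the main class therefore has the same size $r!\,s!\,n!/\#\mathfrak{I}(P,P)$, and dividing the part~2 count by this number gives the result. Equivalently, one can count the $\mathfrak{I}_{r,s,n}$-orbits inside the transitive $\mathfrak{P}_{r,s,n}$-set via double cosets $\mathfrak{I}_{r,s,n}\backslash\mathfrak{P}_{r,s,n}/\mathfrak{P}(P,P)$, whose number is $[\mathfrak{P}_{r,s,n}:\mathfrak{I}_{r,s,n}\,\mathfrak{P}(P,P)]$; expanding this index with $\#(\mathfrak{I}_{r,s,n}\,\mathfrak{P}(P,P))=\#\mathfrak{I}_{r,s,n}\,\#\mathfrak{P}(P,P)/\#\mathfrak{I}(P,P)$ reproduces the same formula and sidesteps the equal-size claim. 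The main care needed throughout is bookkeeping the composition order of the semidirect-product action so that the conjugation identity $\mathfrak{I}(P',P')=g\,\mathfrak{I}(P,P)\,g^{-1}$ comes out correctly; this is routine once the convention is fixed.
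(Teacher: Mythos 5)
Your proposal is correct and follows essentially the same route as the paper: parts~1 and~2 are the Orbit--Stabilizer Theorem applied to the $\mathfrak{I}_{r,s,n}$- and $\mathfrak{P}_{r,s,n}$-actions, and part~3 rests on the same conjugation identity the paper uses ($\Theta$ is an autotopism of $P$ iff $\Lambda^{-1}\Theta\Lambda$ is an autotopism of $P^{\Lambda}$, with normality of $\mathfrak{I}_{r,s,n}$ in $\mathfrak{P}_{r,s,n}$ keeping the conjugate inside the isotopism group), so that all isotopism classes in a main class have equal size and division gives the count. Your double-coset reformulation via $\mathfrak{I}_{r,s,n}\backslash\mathfrak{P}_{r,s,n}/\mathfrak{P}(P,P)$ is a valid extra remark but not a different proof in substance.
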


\begin{proof}
The first two claims follow from the Orbit-Stabilizer Theorem.  For the third claim, we observe that paratopic partial Latin rectangles have autotopism groups of the same size, because $\Theta$ is an autotopism of $P$ if and only if $\Lambda^{-1}\Theta\Lambda$ is an autotopism of $P^{\Lambda}$ for any paratopism $\Lambda$.  They thus also have isotopism classes of the same size, which partition the main class, so the first two claims imply the third.
\end{proof}

From here on, let $\mathrm{Isom}(n;m)$, $\mathrm{Isot}(r,s,n;m)$ and $\mathrm{MC}(r,s,n;m)$ respectively denote the set of isomorphism classes of $\PLS(n;m)$ and the sets of isotopism and main classes of $\PLR(r,s,n;m)$. The following result follows straightforwardly from Burnside's Lemma and the fact of acting the isomorphism, isotopism, and paratopism groups on the set of partial Latin rectangles of a given order.

\begin{theo}\label{theo:isomisot} Let $r,s,n \geq 1$ and $m\leq rs$. Then,
\begin{enumerate}
\item the number of isomorphism classes in $\PLS(n;m)$ is
\[\#\mathrm{Isom}(n;m)=\frac 1{n!}\sum_{\pi\in S_n} \#\PLR(((\pi,\pi,\pi),\mathrm{Id});m);\]
\item the number of isotopism classes in $\PLR(r,s,n;m)$ is
\[\#\mathrm{Isot}(r,s,n;m) = \frac 1{r!\,s!\,n!}\sum_{\Theta\in \mathfrak{I}_{r,s,n}} \#\PLR((\Theta,\mathrm{Id});m);\] and
\item the number of main classes in $\PLR(r,s,n;m)$ is
\[\#\mathrm{MC}(r,s,n;m) =\frac 1{r!\,s!\,n!\,\#S_{r,s,n}} \sum_{(\Theta,\pi)\in \mathfrak{P}_{r,s,n}} \#\PLR((\Theta,\pi);m).\]
\end{enumerate}
\end{theo}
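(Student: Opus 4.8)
The plan is to apply Burnside's Lemma (the Cauchy--Frobenius Lemma) separately to each of the three group actions already set up in Section~\ref{sec:intro}, after identifying in each case the acting group, its order, and the set of partial Latin rectangles fixed by a given group element. Recall that if a finite group $G$ acts on a finite set $X$, then the number of orbits equals $\frac{1}{\#G}\sum_{g\in G}\#X^g$, where $X^g$ denotes the set of elements of $X$ fixed by $g$. The observation common to all three parts is that, by the very definition of $\PLR((\Theta,\pi);m)$ recalled in Section~\ref{sec:intro}, a partial Latin rectangle $L$ lies in $X^g$ for $g=(\Theta,\pi)$ precisely when $L^{(\Theta,\pi)}=L$, i.e.\ when $L$ admits $(\Theta,\pi)$ as an autoparatopism; thus the fixed-point set of $g$ is exactly $\PLR((\Theta,\pi);m)$ (and its isomorphism analogue in part (1)). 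Once this identification is in place, each formula is immediate.

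For part (2), I would take $G=\mathfrak{I}_{r,s,n}=S_r\times S_s\times S_n$ acting on $X=\PLR(r,s,n;m)$, so that $\#G=r!\,s!\,n!$ and the orbits are precisely the isotopism classes. Since $\Theta\in\mathfrak{I}_{r,s,n}$ fixes $L$ if and only if $L$ admits the autoparatopism $(\Theta,\mathrm{Id})$, the fixed-point set is $\PLR((\Theta,\mathrm{Id});m)$, and Burnside's Lemma gives the stated expression. Part (1) is the same argument restricted to partial Latin squares: the isomorphism group is the diagonal subgroup $\{(\alpha,\alpha,\alpha):\alpha\in S_n\}\cong S_n$ acting on $\PLS(n;m)$, so $\#G=n!$, the orbits are the isomorphism classes, and $\alpha$ fixes $L$ exactly when $L$ admits $((\alpha,\alpha,\alpha),\mathrm{Id})$ as an autoparatopism, yielding the fixed-point set $\PLR(((\pi,\pi,\pi),\mathrm{Id});m)$. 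Part (3) uses $G=\mathfrak{P}_{r,s,n}=\mathfrak{I}_{r,s,n}\rtimes S_{r,s,n}$ acting on $\PLR(r,s,n;m)$, whose orbits are the main classes; here I would invoke that the order of a semidirect product is the product of the orders of its factors, so that $\#\mathfrak{P}_{r,s,n}=r!\,s!\,n!\,\#S_{r,s,n}$, while the fixed-point set of $(\Theta,\pi)$ is $\PLR((\Theta,\pi);m)$.

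The three arguments are essentially routine once the fixed-point sets have been identified, so the only places demanding care are the bookkeeping of group orders and the restriction of the action in part (1). Specifically, one must confirm that the diagonal copy of $S_n$ has order $n!$ and that it genuinely preserves $\PLS(n;m)$ (rather than sending a partial Latin square to an object of incompatible shape), and that for part (3) the semidirect product indeed has order $r!\,s!\,n!\,\#S_{r,s,n}$. I expect no serious obstacle here, since the compatibility of each of these actions with the weight $m$ and with the partial-Latin-rectangle structure was already established in Section~\ref{sec:intro}; the ``hard part'' is therefore only to state the fixed-point identification carefully enough that the three formulas follow as clean specializations of a single application of Burnside's Lemma.
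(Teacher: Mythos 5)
Your proposal is correct and matches the paper's argument exactly: the paper derives Theorem~\ref{theo:isomisot} directly from Burnside's Lemma applied to the isomorphism, isotopism, and paratopism group actions on the relevant sets of partial Latin rectangles, with the fixed-point set of $(\Theta,\pi)$ identified with $\PLR((\Theta,\pi);m)$ by the definition of autoparatopism, just as you do. Your bookkeeping of the group orders $n!$, $r!\,s!\,n!$, and $r!\,s!\,n!\,\#S_{r,s,n}$ is precisely the content the paper leaves implicit in calling the result straightforward.
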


In practice, it is not necessary to compute all the values $\#\PLR((\Theta,\pi);m)$ in order to determine $\#\mathrm{Isom}(n;m)$, $\#\mathrm{Isot}(r,s,n;m)$, or $\#\mathrm{MC}(r,s,n;m)$ in Theorem~\ref{theo:isomisot}. The following lemma implies that $\#\PLR((\Theta,\pi);m)$ only depends on the conjugacy class of the corresponding paratopism. Recall that two permutations $\alpha,\beta\in S_n$ are {\em conjugate} if there exists a third permutation $\gamma\in S_n$ such that $\alpha=\gamma^{-1}\beta\gamma$, which naturally generalizes to isotopisms and paratopisms under componentwise conjugacy. In what follows, conjugacy is denoted $\sim$.

\begin{lemm}\label{lmm_Conjugacy}
Let $(\Theta_1,\pi_1)$ and $(\Theta_2,\pi_2)$ be two conjugate paratopisms in $\mathfrak{P}_{r,s,n}$. Then, \[\#\PLR((\Theta_1,\pi_1);m)=\#\PLR((\Theta_2,\pi_2);m).\]
\end{lemm}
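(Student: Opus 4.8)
The statement asserts that the number of weight-$m$ partial Latin rectangles admitting a given autoparatopism depends only on the conjugacy class of that paratopism. The natural strategy is to exhibit an explicit bijection between $\PLR((\Theta_1,\pi_1);m)$ and $\PLR((\Theta_2,\pi_2);m)$ induced by the conjugating paratopism. Concretely, suppose $(\Theta_2,\pi_2) = \Lambda^{-1}(\Theta_1,\pi_1)\Lambda$ in $\mathfrak{P}_{r,s,n}$, where $\Lambda = (\Phi,\sigma)$ is the conjugating paratopism. The plan is to show that the map $L \mapsto L^{\Lambda}$ sends partial Latin rectangles fixed by $(\Theta_1,\pi_1)$ to partial Latin rectangles fixed by $(\Theta_2,\pi_2)$, and that this map is a weight-preserving bijection between the two sets.

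The key computation is the following. Recall that $(\Theta,\pi)$ is an autoparatopism of $L$ exactly when $L^{(\Theta,\pi)} = L$. First I would verify that the paratopism group action is a genuine (right or left) action, so that applying $\Lambda$ and then $(\Theta_2,\pi_2)$ can be rewritten using associativity of the group operation. Then, for $L \in \PLR((\Theta_1,\pi_1);m)$, I would compute
\[
(L^{\Lambda})^{(\Theta_2,\pi_2)} = (L^{\Lambda})^{\Lambda^{-1}(\Theta_1,\pi_1)\Lambda} = \big(L^{(\Theta_1,\pi_1)}\big)^{\Lambda} = L^{\Lambda},
\]
using $L^{(\Theta_1,\pi_1)} = L$ in the last step. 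This shows $L^{\Lambda}$ admits $(\Theta_2,\pi_2)$ as an autoparatopism, so the map is well-defined into $\PLR((\Theta_2,\pi_2);m)$. Since paratopisms preserve weight (they permute entries of the entry set), $L^{\Lambda}$ still has weight $m$.

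It remains to check that the map is a bijection. The inverse is simply $M \mapsto M^{\Lambda^{-1}}$: the symmetric computation with the roles of the two paratopisms exchanged shows that this sends $\PLR((\Theta_2,\pi_2);m)$ back into $\PLR((\Theta_1,\pi_1);m)$, and the two maps are mutually inverse because $(L^{\Lambda})^{\Lambda^{-1}} = L$. The equality of cardinalities follows immediately. The main obstacle is almost entirely bookkeeping rather than conceptual: one must be careful with the order of composition in the semidirect product $\mathfrak{P}_{r,s,n} = \mathfrak{I}_{r,s,n} \rtimes S_{r,s,n}$, since the paratopism $\pi$ acts on the dimensions and hence the factors do not commute. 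In particular I would make sure the convention $L^{(\Theta,\pi)} = (L^{\pi})^{\Theta}$ stated in the introduction is compatible with a clean group action $\big(L^{\Lambda_1}\big)^{\Lambda_2} = L^{\Lambda_2 \Lambda_1}$ (or $L^{\Lambda_1\Lambda_2}$), and adjust the exponent arithmetic in the displayed identity accordingly; this is the one place where a sign or ordering error could creep in. Everything else is the standard argument that a conjugation-induced bijection matches up stabilizer-type (fixed-point) sets.
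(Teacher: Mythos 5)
Your proposal is correct and takes essentially the same route as the paper's own proof: the paper likewise writes $(\Theta_2,\pi_2)=(\Theta_3,\pi_3)^{-1}(\Theta_1,\pi_1)(\Theta_3,\pi_3)$ and verifies $(L^{(\Theta_3,\pi_3)})^{(\Theta_2,\pi_2)}=(L^{(\Theta_1,\pi_1)})^{(\Theta_3,\pi_3)}=L^{(\Theta_3,\pi_3)}$, concluding that $L\mapsto L^{(\Theta_3,\pi_3)}$ is a weight-preserving bijection between $\PLR((\Theta_1,\pi_1);m)$ and $\PLR((\Theta_2,\pi_2);m)$. Your added attention to the composition convention in the semidirect product is a reasonable precaution but does not alter the argument.
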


\begin{proof}
Since $(\Theta_1,\pi_1)\sim (\Theta_2,\pi_2)$, there exists $(\Theta_3,\pi_3)\in \mathfrak{P}_{r,s,n}$ such that $(\Theta_2,\pi_2)=(\Theta_3,\pi_3)^{-1}$ $(\Theta_1,\pi_1)(\Theta_3,\pi_3)$.  Let $L\in\PLR((\Theta_1,\pi_1);m)$. Then, $(L^{(\Theta_3,\pi_3)})^{(\Theta_2,\pi_2)}=(L^{(\Theta_1,\pi_1)})^{(\Theta_3,\pi_3)}=L^{(\Theta_3,\pi_3)}$ and so, $L^{(\Theta_3,\pi_3)}\in \PLR((\Theta_2,\pi_2);m)$. The result holds because $\PLR((\Theta_2,\pi_2);m)=\{L^{(\Theta_3,\pi_3)}\colon L\in \PLR((\Theta_1,\pi_1);m)\}$.
\end{proof}

The following result is shown using similar reasoning to that used by Mendis and Wanless in the proof of Theorem~2.2 in \cite{Mendis2016} for paratopisms of Latin squares.

\begin{theo}\label{theo:Conjugacy_MW} Two paratopisms $((\alpha_1,\alpha_2,\alpha_3),\pi_1)$ and $((\beta_1,\beta_2,\beta_3),\pi_2)$ in $\mathfrak{P}_{r,s,n}$ are conjugate if and only if there is a length preserving bijection $\eta$ from the cycles of $\pi_1$ to those of $\pi_2$ such that, if $\eta$ maps a cycle $(a_1,\ldots,a_k)$ to a cycle $(b_1,\ldots,b_k)$, both of them in the symmetric group $S_3$, then  $\alpha_{a_1}\cdots\alpha_{a_k}\sim\beta_{b_1}\cdots\beta_{b_k}$.

As a consequence, any paratopism $((\alpha,\beta,\gamma),(12))\in \mathfrak{P}_{r,r,n}$ is conjugate to $((\mathrm{Id},
\alpha\beta,\gamma),(12))\in \mathfrak{P}_{r,r,n}$, and any $((\alpha,\beta,\gamma),(123))\in \mathfrak{P}_{r,r,r}$ is conjugate to both $((\mathrm{Id},
\mathrm{Id},\alpha\beta\gamma),(123))\in \mathfrak{P}_{r,r,r}$ and $((\mathrm{Id},
\mathrm{Id},\alpha\beta\gamma),(132))\in \mathfrak{P}_{r,r,r}$.
\end{theo}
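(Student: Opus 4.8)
The plan is to treat $\mathfrak{P}_{r,s,n}=\mathfrak{I}_{r,s,n}\rtimes S_{r,s,n}$ as a generalized wreath product and adapt the classical description of conjugacy classes in wreath products (the Mendis--Wanless argument we are imitating). First I would make the semidirect-product law explicit. Writing a paratopism as $(\Theta,\pi)$ with $\Theta=(\theta_1,\theta_2,\theta_3)$, the parastrophism $\pi$ acts on isotopisms by permuting coordinates, $(\pi\ast\Theta)_t=\theta_{\pi^{-1}(t)}$. Unwinding the right action $L^{(\Theta,\pi)}=(L^{\pi})^{\Theta}$ on entry sets (using $(L^{\Theta})^{\Theta'}=L^{\Theta'\Theta}$, $(L^{\pi})^{\pi'}=L^{\pi\pi'}$, and $(L^{\Theta})^{\pi}=(L^{\pi})^{\pi^{-1}\ast\Theta}$) yields the group law and, after a short computation, the conjugation formula: if $Q=(\phi_1,\phi_2,\phi_3;\rho)$ then $Q^{-1}(\alpha_1,\alpha_2,\alpha_3;\pi_1)Q=(\beta_1,\beta_2,\beta_3;\pi_2)$ with $\pi_2=\rho^{-1}\pi_1\rho$ and $\beta_t=\phi_t\,\alpha_{\rho(t)}\,\phi_{\pi_2(t)}^{-1}$. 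A key structural remark to record here is that, since every $\pi\in S_{r,s,n}$ stabilizes $(r,s,n)$, each cycle of $\pi$ involves only coordinates of a common dimension $d$, so the cycle products $\alpha_{a_1}\cdots\alpha_{a_k}$ lie in a single symmetric group $S_d$ and the relation $\sim$ is meaningful.

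For necessity I would first note that the cycle product is well defined up to conjugacy: re-reading a cycle $(a_1,\ldots,a_k)$ from a different starting point cyclically rotates $\alpha_{a_1}\cdots\alpha_{a_k}$, and a cyclic rotation is conjugate to the original (by the omitted prefix). Now suppose the two paratopisms are conjugate via $Q=(\phi_1,\phi_2,\phi_3;\rho)$. Then $\pi_2=\rho^{-1}\pi_1\rho$, so $\rho$ carries each cycle $C=(c_1,\ldots,c_k)$ of $\pi_2$ to a cycle $(\rho(c_1),\ldots,\rho(c_k))$ of $\pi_1$ of the same length; this defines the length-preserving bijection $\eta$ (from $\pi_1$-cycles to $\pi_2$-cycles). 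Substituting $\beta_{c_j}=\phi_{c_j}\alpha_{\rho(c_j)}\phi_{c_{j+1}}^{-1}$ into the cycle product and telescoping gives
\[
\beta_{c_1}\cdots\beta_{c_k}=\phi_{c_1}\big(\alpha_{\rho(c_1)}\cdots\alpha_{\rho(c_k)}\big)\phi_{c_1}^{-1},
\]
so the matched cycle products are conjugate, exactly the stated condition.

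For sufficiency I would reverse this. Given a length-preserving $\eta$ with matched cycle products conjugate, I first choose $\rho\in S_{r,s,n}$ realizing $\eta$, i.e.\ with $\rho^{-1}\pi_1\rho=\pi_2$ and $\rho^{-1}$ sending each $\pi_1$-cycle to the prescribed $\pi_2$-cycle. This is possible because matched cycles necessarily sit on coordinates of equal dimension (otherwise the conjugacy of their cycle products would not be meaningful), so the required coordinate permutation is dimension-preserving and hence lies in $S_{r,s,n}$; for the few possible subgroups of $S_3$ one checks directly that equal cycle type forces conjugacy inside $S_{r,s,n}$. Then I would build the $\phi_t$ one cycle at a time: on a cycle $C=(c_1,\ldots,c_k)$ of $\pi_2$ set $A=\alpha_{\rho(c_1)}\cdots\alpha_{\rho(c_k)}$ and $B=\beta_{c_1}\cdots\beta_{c_k}$, pick $\phi_{c_1}=\psi$ with $B=\psi A\psi^{-1}$ (hypothesis), and define the remaining entries recursively by $\phi_{c_{j+1}}:=\beta_{c_j}^{-1}\phi_{c_j}\alpha_{\rho(c_j)}$, so that $\beta_{c_j}=\phi_{c_j}\alpha_{\rho(c_j)}\phi_{c_{j+1}}^{-1}$ holds by construction. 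The one point to verify --- and the main obstacle --- is that the recursion closes up consistently around the cycle, i.e.\ $\phi_{c_{k+1}}=\phi_{c_1}$; unwinding gives $\phi_{c_{k+1}}=B^{-1}\psi A$, which equals $\psi=\phi_{c_1}$ precisely because $B=\psi A\psi^{-1}$. Hence $Q=(\phi_1,\phi_2,\phi_3;\rho)$ conjugates one paratopism to the other.

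Finally I would read off the consequences by computing cycle products. For $((\alpha,\beta,\gamma),(12))\in\mathfrak{P}_{r,r,n}$ the cycles of $(12)$ are the transposition $\{1,2\}$ with product $\alpha\beta$ and the fixed point $\{3\}$ with product $\gamma$; for $((\mathrm{Id},\alpha\beta,\gamma),(12))$ these become $\mathrm{Id}\cdot(\alpha\beta)=\alpha\beta$ and $\gamma$, so matching like cycles gives equal (hence conjugate) products and the theorem yields conjugacy. For $((\alpha,\beta,\gamma),(123))\in\mathfrak{P}_{r,r,r}$ the single $3$-cycle has product $\alpha\beta\gamma$, matching that of $((\mathrm{Id},\mathrm{Id},\alpha\beta\gamma),(123))$ directly. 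For $((\mathrm{Id},\mathrm{Id},\alpha\beta\gamma),(132))$, the bijection $\eta$ matches the $3$-cycle of $(123)$ with that of $(132)$, whose product is again $\alpha\beta\gamma$; since cyclic rotations and the relation $xy\sim yx$ make such products conjugate regardless of reading direction, and since $(123)\sim(132)$ in $S_3=S_{r,r,r}$ supplies the needed $\rho$, conjugacy follows. I expect the genuinely delicate points to be pinning down the semidirect-product and conjugation conventions in the first step and the cycle-closure check in the sufficiency step; the remainder is bookkeeping.
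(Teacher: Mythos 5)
Your proof is correct, but it takes a genuinely different route from the paper. The paper's proof is essentially a two-line reduction: when $r=s=n$ or $\pi_1=\pi_2$ it invokes the proof of Theorem~2.2 of Mendis and Wanless \cite{Mendis2016} verbatim, and the only remaining case (up to equivalence, $r=s\neq n$ with $\pi_1=(12)$, $\pi_2=\mathrm{Id}$) is dispatched by observing that $\eta$ cannot exist and that conjugation in $\mathfrak{P}_{r,s,n}$ preserves the conjugacy class of the parastrophe component, so both sides of the equivalence fail. You instead reprove the underlying wreath-product conjugacy criterion from scratch: your conjugation formula $\beta_t=\phi_t\,\alpha_{\rho(t)}\,\phi_{\pi_2(t)}^{-1}$ with $\pi_2=\rho^{-1}\pi_1\rho$ is correct under the paper's convention $L^{(\Theta,\pi)}=(L^{\pi})^{\Theta}$ (one checks it by equating the two composites in $PQ=QP'$), the telescoping argument for necessity is sound, and the recursive construction of the $\phi_t$ around each cycle closes up exactly when the matched cycle products are conjugate, as you verify via $\phi_{c_{k+1}}=B^{-1}\psi A=\psi$. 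Your route buys self-containedness and surfaces two points the citation hides: (i) every cycle of $\pi\in S_{r,s,n}$ lies on coordinates of a single common dimension, so cycle products live in one symmetric group and the $\rho$ built from $\eta$ is automatically dimension-preserving, hence in $S_{r,s,n}$; and (ii) your parenthetical claim that equal cycle type forces conjugacy of $\pi_1,\pi_2$ inside $S_{r,s,n}$ deserves to be made explicit --- it holds precisely because $S_{r,s,n}$ is always trivial, generated by a single transposition, or all of $S_3$, so the one dangerous subgroup $A_3$ (in which $(123)\not\sim(132)$) never occurs as a stabilizer of $(r,s,n)$; this observation is exactly the content the paper adds to \cite{Mendis2016}, in contrapositive form, in its degenerate case. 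The paper's approach buys brevity at the cost of an external dependency. Your derivation of the stated consequences (cycle products $\alpha\beta$ and $\gamma$ for $\pi=(12)$, product $\alpha\beta\gamma$ for the $3$-cycles, and $(123)\sim(132)$ realized inside $S_{r,r,r}=S_3$) agrees with the paper's.
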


\begin{proof}
If $r=s=n$, or if $\pi_1 = \pi_2$, the proof of Theorem~2.2 in \cite{Mendis2016} suffices the prove the theorem.  Otherwise, up to equivalence, we have $\pi_1=(12)$ and $\pi_2=\mathrm{Id}$ and $r=s \neq n$.  Clearly, $\eta$ does not exist.  The two paratopisms are not conjugate since conjugation in $\mathfrak{P}_{r,s,n}$ preserves the conjugacy class of the parastrophe permutation.
\end{proof}

\begin{theo}\label{theo:Conjugacy}
For any paratopism $((\alpha,\beta,\gamma),(12))\in \mathfrak{P}_{r,r,n}$, we have
\begin{align*}
    & \#\PLR(((\alpha,\beta,\gamma),(12));m) = \#\PLR(((\beta,\alpha,\gamma),(12));m) \\
={} & \#\PLR(((\alpha,\gamma,\beta),(13));m) = \#\PLR(((\beta,\gamma,\alpha),(13));m) \\
={} & \#\PLR(((\gamma,\alpha,\beta),(23));m) = \#\PLR(((\gamma,\beta,\alpha),(23));m).
\end{align*}
Moreover, any $((\alpha,\beta,\gamma),(12)) \in \mathfrak{P}_{r,r,n}$ is conjugate to a paratopism $((\mathrm{Id},\beta',\gamma'), (12))$, where $\alpha\beta\sim\beta'$ and $\gamma\sim\gamma'$.  If $\pi$ is a $3$-cycle, then any paratopism $((\alpha,\beta,\gamma),\pi)\in \mathfrak{P}_{r,r,r}$ is conjugate to a paratopism $((\mathrm{Id},\mathrm{Id},\gamma),(123)) \in \mathfrak{P}_{r,s,n}$, where $\alpha\beta\gamma\sim\gamma'$.
\end{theo}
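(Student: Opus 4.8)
The plan is to deduce all three assertions from Lemma~\ref{lmm_Conjugacy} (conjugate paratopisms admit the same number of partial Latin rectangles of each weight) together with the conjugacy criterion of Theorem~\ref{theo:Conjugacy_MW}. The only genuine content is therefore to exhibit the relevant paratopisms as conjugate inside the appropriate paratopism group, which Theorem~\ref{theo:Conjugacy_MW} reduces to matching, cycle by cycle of the parastrophe permutation, the conjugacy classes of the products of the isotopism components indexed by each cycle. The single elementary fact driving everything is that for permutations $\mu,\nu$ of a common set, $\mu\nu\sim\nu\mu$, since $\nu\mu=\mu^{-1}(\mu\nu)\mu$.

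For the six-term equality I would, for each listed paratopism $(\Theta,\pi)$ with $\pi$ a transposition, read off the two cycles of $\pi$ in $S_3$ (one $2$-cycle and one fixed point) and, following Theorem~\ref{theo:Conjugacy_MW}, compute its pair of invariants: the conjugacy class of the product of the two components indexed by the $2$-cycle, and the class of the single component indexed by the fixed point. A direct check of all six shows that in every case the $2$-cycle yields $\alpha\beta$ or $\beta\alpha$ (hence the common class $[\alpha\beta]=[\beta\alpha]$ by the fact above) and the fixed point yields $\gamma$; for instance $((\alpha,\gamma,\beta),(13))$ has $2$-cycle $(1,3)$ with product $\alpha\beta$ and fixed point $(2)$ giving $\gamma$. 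Thus all six carry the invariants $([\alpha\beta],[\gamma])$, so Theorem~\ref{theo:Conjugacy_MW} makes them mutually conjugate and Lemma~\ref{lmm_Conjugacy} gives the equalities. Here I would flag the dimension bookkeeping: the paratopisms with $\pi\in\{(13),(23)\}$ belong to the paratopism group only when $(13),(23)\in S_{r,r,n}$, i.e. when $r=n$; for general $r$ the content reduces to the first equality, between the two $\pi=(12)$ paratopisms, which lives in $\mathfrak{P}_{r,r,n}$ since $(12)\in S_{r,r,n}$ always.

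The two normal-form statements are the same computation with the invariants read in reverse. For $\pi=(12)$ the paratopism $((\alpha,\beta,\gamma),(12))$ has invariants $([\alpha\beta],[\gamma])$ while $((\mathrm{Id},\beta',\gamma'),(12))$ has invariants $([\beta'],[\gamma'])$; by Theorem~\ref{theo:Conjugacy_MW} these are conjugate precisely when $\alpha\beta\sim\beta'$ and $\gamma\sim\gamma'$, refining the consequence already recorded in Theorem~\ref{theo:Conjugacy_MW}. For a $3$-cycle $\pi$ (which forces $r=s=n$ for $\pi\in S_{r,s,n}$) the permutation has the single cycle $(1,2,3)$, so $((\alpha,\beta,\gamma),(123))$ carries the one invariant $[\alpha\beta\gamma]$ and the reduced form $((\mathrm{Id},\mathrm{Id},\gamma'),(123))$ carries $[\gamma']$; they are conjugate if and only if $\alpha\beta\gamma\sim\gamma'$, again via Lemma~\ref{lmm_Conjugacy}.

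The main obstacle I anticipate is not conceptual but bookkeeping: getting each cycle-product in the correct index order, reconciling the starting-point ambiguity of a cyclic word against conjugacy (harmless because $\mu\nu\sim\nu\mu$), and --- most delicately --- tracking which paratopism group each object inhabits, so that every conjugacy in a given chain genuinely takes place inside one $\mathfrak{P}$ and the stabilizer condition $\pi\in S_{r,r,n}$ is respected throughout. Once the invariants are tabulated and this dimension constraint is handled, each assertion is an immediate application of the two cited results.
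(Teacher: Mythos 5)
Your within-group conjugacy computations and the two normal-form claims are fine, and for those parts you follow essentially the paper's route (Theorem~\ref{theo:Conjugacy_MW} plus Lemma~\ref{lmm_Conjugacy}). But there is a genuine gap in your treatment of the six-term equality, precisely at the point you flag as ``dimension bookkeeping'' and then resolve incorrectly. You conclude that for $r\neq n$ the $(13)$- and $(23)$-paratopisms do not inhabit any relevant group, so that ``the content reduces to the first equality.'' That is a misreading of the statement: the six paratopisms are intended to live in \emph{three different} paratopism groups. Since $\alpha,\beta\in S_r$ and $\gamma\in S_n$, the paratopism $((\alpha,\gamma,\beta),(13))$ lies in $\mathfrak{P}_{r,n,r}$ and its count $\#\PLR(((\alpha,\gamma,\beta),(13));m)$ is taken inside $\PLR(r,n,r;m)$; likewise $((\gamma,\alpha,\beta),(23))\in\mathfrak{P}_{n,r,r}$ counts inside $\PLR(n,r,r;m)$. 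These cross-group equalities are real content for all $r,n$, and they \emph{cannot} be obtained from Lemma~\ref{lmm_Conjugacy}, because conjugacy only makes sense within a single group $\mathfrak{P}_{r,s,n}$; indeed Theorem~\ref{theo:Conjugacy_MW} itself records that conjugation preserves the conjugacy class of the parastrophe permutation, so even when $r=s=n$ you need an extra step to compare a $(12)$-paratopism with a $(13)$-paratopism (which your invariant computation does supply in that special case, but only in that case).

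The missing idea is a parastrophy transport argument, which is exactly what the paper uses: applying the parastrophism $(23)$ gives a weight-preserving bijection $L\mapsto L^{(23)}$ from $\PLR(r,r,n)$ to $\PLR(r,n,r)$, and one checks directly that $((\alpha,\beta,\gamma),(12))\in\mathrm{Atop}(L)$ if and only if $((\alpha,\gamma,\beta),(13))\in\mathrm{Atop}(L^{(23)})$ --- the isotopism components get permuted by $(23)$ and the parastrophe permutation gets conjugated, $(23)(12)(23)=(13)$. This yields $\#\PLR(((\alpha,\beta,\gamma),(12));m)=\#\PLR(((\alpha,\gamma,\beta),(13));m)$ with no hypothesis on $r$ versus $n$, and the analogous parastrophies handle the $(23)$ column; Lemma~\ref{lmm_Conjugacy} then supplies the swaps of $\alpha$ and $\beta$ within each group, since $((\beta,\alpha,\gamma),(12))$ is conjugate to $((\alpha,\beta,\gamma),(12))$ in $\mathfrak{P}_{r,r,n}$ (your $\mu\nu\sim\nu\mu$ observation is the right check here). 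So your proposal proves the theorem only when $r=n$; to repair it for general $n$ you must replace the conjugacy argument across different transpositions with this explicit bijection between the three ambient sets of partial Latin rectangles.
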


\begin{proof}
We have $((\alpha,\beta,\gamma),(12))
\in \mathrm{Atop}(L)$ if and only if $((\alpha,\gamma,\beta),(13))\in \mathrm{Atop}(L^{(23)})$.  Consequently, $\#\PLR(((\alpha,\beta,\gamma),(12));m) = \#\PLR(((\alpha,\gamma,\beta),(13));m)$.  Lemma~\ref{lmm_Conjugacy} implies \[\#\PLR(((\alpha,\beta,\gamma),(12));m) = \#\PLR(((\beta,\alpha,\gamma),(12));m).\]
And we similarly prove the other equalities.  Theorem~\ref{theo:Conjugacy_MW} implies the conjugacy claims.
\end{proof}

Conjugacy in symmetric groups constitutes an equivalence relation in which each conjugacy class is characterized by the common cycle structure of their elements. Recall that the {\em cycle structure} of a permutation $\pi \in S_m$ is the expression $z_{\pi}:=m^{d_m^{\pi}}\cdots 1^{d_1^{\pi}}$, where $d_i^{\pi}$ denotes the number of cycles of length $i$ in the unique cycle decomposition of the permutation $\pi$.  Thus, for instance, the cycle structure of the permutation $(12)(345)(78)(9)$ is $3^12^21^1$. From here on, we denote the set of cycle structures of the symmetric group $S_m$ as $\mathcal{CS}_m$. The number of permutations in $S_m$ with cycle structure $m^{d_m}\cdots 1^{d_1}\in \mathcal{CS}_m$
\begin{equation}\label{eq:permcyc}
\frac{m!}{\prod_{i \in [m]} d_i!\,i^{d_i}},
\end{equation}
as in \cite[Theorem~13.2]{Andrews1984}.
Further, the {\em cycle structure} of an isotopism $(\alpha,\beta,\gamma)\in \mathfrak{I}_{r,s,n}$ is defined as the triple $(z_{\alpha},z_{\beta},z_{\gamma})$ formed by the respective cycle structures of $\alpha$, $\beta$, and $\gamma$.  Keeping in mind Lemma~\ref{lmm_Conjugacy} and Theorem~\ref{theo:Conjugacy}, the following values are well-defined:
\begin{itemize}
\item
$\Delta_m(z_1,z_2,z_3):=\#\PLR((\Theta,\mathrm{Id});m)$, for any $\Theta\in \mathfrak{I}_{r,s,n}$ with cycle structure $(z_1,z_2,z_3)$;
\item $\Delta^{(12)}_m(z_2,z_3):=\#\PLR((\Theta,(12));m)$, for any $\Theta=(\alpha,\beta,\gamma)\in \mathfrak{I}_{r,s,n}$ where $r=s$, such that $z_{\alpha\beta}=z_2$ and $z_{\gamma}=z_3$; and
\item $\Delta^{(123)}_m(z_3):=\#\PLR((\Theta,(123));m)$, for any $\Theta=(\alpha,\beta,\gamma)\in \mathfrak{I}_{r,s,n}$ where $r=s=n$, such that $z_{\alpha\beta\gamma}=z_3$.
\end{itemize}

Given a cycle structure $z\in \mathcal{CS}_m$, define $d_i^z:=d_i^{\pi}$ for any permutation $\pi\in S_m$ with cycle structure $z$. The next theorem follows straightforwardly from Theorem~\ref{theo:isomisot}, Lemma \ref{lmm_Conjugacy} and \eqref{eq:permcyc}.

\begin{theo}\label{th:isomisot} Let $r,s,n \geq 1$ and $m\leq rs$. Then,
\begin{enumerate}
\item the number of isomorphism classes in $\PLS(n;m)$ is
\[\#\mathrm{Isom}(n;m) =\sum_{z\in\mathcal{CS}_n}\frac {\Delta_m(z,z,z)}{\prod_{i\in [n]}d_i^z!\,i^{d_i^z}},\] and
\item\label{iso_classes} the number of isotopism classes in $\PLR(r,s,n;m)$ is
\[\#\mathrm{Isot}(r,s,n;m) = \sum_{\substack{z_1\in\mathcal{CS}_r\\z_2\in\mathcal{CS}_s\\ z_3\in\mathcal{CS}_n}}\frac {\Delta_m(z_1,z_2,z_3)}{\prod_{\substack{i\in [r]\\j\in [s]\\k\in [n]}}\,d_i^{z_1}!\,d_j^{z_2}!\,d_k^{z_3}!\, i^{d_i^{z_1}}\,j^{d_j^{z_2}}\,k^{d_k^{z_3}}}.\]
\end{enumerate}
\end{theo}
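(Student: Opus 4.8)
The plan is to start from Theorem~\ref{theo:isomisot} and collapse each sum over group elements into a sum over conjugacy classes—equivalently, cycle structures—exploiting the fact that the summand depends only on the cycle structure of the relevant isotopism. Since both claims have the same shape, I would treat them in parallel, doing the isomorphism case first as a warm-up and then the isotopism case, where the only extra ingredient is that the count of isotopisms with a prescribed triple of cycle structures factors over the three symmetric groups.

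For the first claim I would begin with the formula
\[\#\mathrm{Isom}(n;m)=\frac 1{n!}\sum_{\pi\in S_n} \#\PLR(((\pi,\pi,\pi),\mathrm{Id});m)\]
from Theorem~\ref{theo:isomisot}. The isotopism $(\pi,\pi,\pi)$ has cycle structure $(z_\pi,z_\pi,z_\pi)$, and two diagonal isotopisms built from permutations with equal cycle structure are conjugate in $\mathfrak{I}_{n,n,n}$; hence Lemma~\ref{lmm_Conjugacy} guarantees that $\#\PLR(((\pi,\pi,\pi),\mathrm{Id});m)$ depends only on $z_\pi$ and equals $\Delta_m(z_\pi,z_\pi,z_\pi)$. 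Grouping the sum over $\pi$ according to its cycle structure $z\in\mathcal{CS}_n$ and inserting the count \eqref{eq:permcyc} of permutations with cycle structure $z$, I obtain
\begin{align*}
\#\mathrm{Isom}(n;m) &=\frac 1{n!}\sum_{z\in\mathcal{CS}_n}\frac{n!}{\prod_{i\in[n]}d_i^z!\,i^{d_i^z}}\,\Delta_m(z,z,z) \\
&=\sum_{z\in\mathcal{CS}_n}\frac{\Delta_m(z,z,z)}{\prod_{i\in[n]}d_i^z!\,i^{d_i^z}},
\end{align*}
where the factor $n!$ cancels, giving the stated formula.

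For the second claim I would proceed identically, starting from the isotopism-class formula in Theorem~\ref{theo:isomisot} and writing $\Theta=(\alpha,\beta,\gamma)\in\mathfrak{I}_{r,s,n}=S_r\times S_s\times S_n$. Here the key observation is that conjugacy in a direct product is componentwise, so $\#\PLR((\Theta,\mathrm{Id});m)$ depends only on the triple $(z_\alpha,z_\beta,z_\gamma)$ and equals $\Delta_m(z_\alpha,z_\beta,z_\gamma)$ by Lemma~\ref{lmm_Conjugacy}. The number of $\Theta\in\mathfrak{I}_{r,s,n}$ with a prescribed cycle-structure triple $(z_1,z_2,z_3)$ is the product of three instances of \eqref{eq:permcyc}, one per factor group; substituting this product and cancelling $r!\,s!\,n!$ against the factorials it produces yields exactly the claimed formula.

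The computation is routine, and there is no genuine obstacle—only bookkeeping. The one point deserving care is the factorization of the enumerator in the second claim: that the number of isotopisms of a given cycle-structure triple is literally the product of the three separate permutation counts, which is precisely what makes the triple product appearing in the denominator of the statement arise. Everything else is the cancellation of the leading $n!$ (respectively $r!\,s!\,n!$) against the factorials supplied by \eqref{eq:permcyc}.
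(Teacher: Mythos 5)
Your proposal is correct and matches the paper's argument: the paper derives Theorem~\ref{th:isomisot} exactly as you do, by applying Lemma~\ref{lmm_Conjugacy} (componentwise conjugacy) to collapse the Burnside sums of Theorem~\ref{theo:isomisot} over conjugacy classes, inserting the cycle-structure count \eqref{eq:permcyc} for each symmetric-group factor, and cancelling $n!$ (respectively $r!\,s!\,n!$). Your extra remark that the count of isotopisms with a prescribed cycle-structure triple factors over the three symmetric groups is precisely the bookkeeping the paper leaves implicit when it says the result ``follows straightforwardly.''
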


In practice, it is not necessary perform computations for all possible triples $(z_1,z_2,z_3)\in \mathcal{CS}_r\times\mathcal{CS}_s\times\mathcal{CS}_n$ to determine the number of isotopism classes in statement~\ref{iso_classes} of Theorem~\ref{th:isomisot}. The following lemma gives necessary and sufficient conditions for $\PLR((\Theta,\mathrm{Id}))$ to contain a non-empty partial Latin rectangle. This generalizes in a natural way a pair of similar results referred to Latin squares \cite[Lemma 3.6]{StonesVojtechovskyWanless2012} and partial Latin squares \cite[Lemma 2.2]{Falcon2013}.

\begin{lemm} \label{lemmLCM} Let $\Theta\in \mathfrak{I}_{r,s,n}$ be an isotopism of cycle structure $(z_1,z_2,z_3)\in \mathcal{CS}_r\times \mathcal{CS}_s \times \mathcal{CS}_n$. The set $\PLR((\Theta,\mathrm{Id}))$ contains at least one non-empty partial Latin rectangle if and only if there exists a triple $(i,j,k)\in [r]\times [s]\times [n]$ such that the following two conditions are satisfied:
\begin{enumerate}
\item $\mathrm{lcm}(i,j)=\mathrm{lcm}(i,k)=\mathrm{lcm}(j,k)= \mathrm{lcm}(i,j,k)$, and
\item $z_1$ has an $i$-cycle, $z_2$ has a $j$-cycle, and $z_3$ has a $k$-cycle.
\end{enumerate}
\end{lemm}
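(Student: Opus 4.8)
The plan is to exploit the fact that, since $\pi = \mathrm{Id}$, membership $L \in \PLR((\Theta,\mathrm{Id}))$ is equivalent to $\Theta$ being an autotopism of $L$, i.e. $\Theta$ permutes the entry set $E(L)$. Writing $\Theta = (\alpha,\beta,\gamma)$, the cyclic group $\langle \Theta \rangle$ acts on the set $[r] \times [s] \times [n]$ of candidate entries, and $E(L)$ must be a disjoint union of orbits of this action. The whole problem therefore reduces to understanding a single orbit: given an entry $(a,b,c)$ with $a,b,c$ lying in cycles of $\alpha,\beta,\gamma$ of lengths $i,j,k$ respectively, its orbit is $O(a,b,c) = \{(\alpha^p(a),\beta^p(b),\gamma^p(c)) : 0 \le p < \ell\}$, where $\ell = \mathrm{lcm}(i,j,k)$ is the orbit length (the least $t>0$ divisible by each of $i$, $j$, $k$). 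A nonempty $L \in \PLR((\Theta,\mathrm{Id}))$ exists if and only if some orbit $O(a,b,c)$ is itself clash-free, since in that case it is a valid nonempty partial Latin rectangle admitting $\Theta$, and conversely any nonempty such $L$ contains an entire orbit inside its clash-free entry set.

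First I would translate clash-freeness of $O(a,b,c)$ into arithmetic. Two orbit members indexed by $p$ and $p'$ coincide as entries exactly when $\ell \mid (p-p')$, so I set $d = p - p'$ and examine when distinct members clash. A cell clash (same row and column) occurs precisely when $\mathrm{lcm}(i,j) \mid d$ but $\ell \nmid d$; a row clash (same row and symbol) when $\mathrm{lcm}(i,k) \mid d$ but $\ell \nmid d$; and a column clash (same column and symbol) when $\mathrm{lcm}(j,k) \mid d$ but $\ell \nmid d$. The key elementary observation is that, since $\ell = \mathrm{lcm}(\mathrm{lcm}(i,j),k)$, the absence of a cell clash is equivalent to $k \mid \mathrm{lcm}(i,j)$, i.e. $\mathrm{lcm}(i,j,k) = \mathrm{lcm}(i,j)$; I would prove this by testing $d = \mathrm{lcm}(i,j)$ to force the forward implication, and by noting that $k \mid \mathrm{lcm}(i,j)$ makes every multiple of $\mathrm{lcm}(i,j)$ a multiple of $\ell$ for the converse. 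Symmetrically, no row clash is equivalent to $\mathrm{lcm}(i,j,k) = \mathrm{lcm}(i,k)$ and no column clash to $\mathrm{lcm}(i,j,k) = \mathrm{lcm}(j,k)$. Hence $O(a,b,c)$ is clash-free if and only if
\[
\mathrm{lcm}(i,j) = \mathrm{lcm}(i,k) = \mathrm{lcm}(j,k) = \mathrm{lcm}(i,j,k),
\]
which is exactly condition~1.

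It then remains to assemble the two directions. For sufficiency, assuming conditions~1 and~2, I would pick $a,b,c$ in cycles of $\alpha,\beta,\gamma$ of the prescribed lengths $i,j,k$ (possible by condition~2), form $O(a,b,c)$, invoke the arithmetic equivalence to conclude it is clash-free, and observe that the partial Latin rectangle with this entry set is nonempty and is cyclically permuted by $\Theta$, so that $\Theta$ is an autotopism and $(\Theta,\mathrm{Id})$ an autoparatopism. For necessity, starting from any nonempty $L \in \PLR((\Theta,\mathrm{Id}))$, I would choose an entry $(a,b,c) \in E(L)$ with cycle lengths $i,j,k$; condition~2 is then immediate, and since $E(L)$ is clash-free and contains the whole orbit $O(a,b,c)$, the arithmetic equivalence forces condition~1.

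I expect the main obstacle to be the clash analysis of the second paragraph: one must carefully match each of the three clash types to the correct divisibility relation among $i$, $j$, $k$ and their pairwise and triple lcms, treat the distinctness of orbit members correctly so that genuine clashes (rather than spurious repetitions at $\ell \mid d$) are detected, and verify the number-theoretic equivalence that, under $\mathrm{lcm}(i,j)\mid d$, one has $\ell \nmid d$ iff $k \nmid d$. The remaining bookkeeping — that a single clash-free orbit forms a valid partial Latin rectangle and that $\Theta$ permutes its entries — is routine.
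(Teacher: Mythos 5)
Your proof is correct, and it is essentially the standard argument: the paper itself states this lemma without proof, presenting it as a natural generalization of \cite[Lemma 3.6]{StonesVojtechovskyWanless2012} and \cite[Lemma 2.2]{Falcon2013}, and your approach---decomposing $E(L)$ into orbits of $\langle\Theta\rangle$, noting an orbit through cycles of lengths $(i,j,k)$ has size $\mathrm{lcm}(i,j,k)$, and reducing clash-freeness to the pairwise-lcm conditions via the test value $d=\mathrm{lcm}(i,j)$ and its symmetric counterparts---is precisely the argument underlying those cited results. All steps check out, including the distinctness criterion $\ell \nmid d$ and both directions of the equivalence between non-emptiness of $\PLR((\Theta,\mathrm{Id}))$ and the existence of a clash-free orbit.
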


Parastrophisms preserve the number of isotopism and main classes of partial Latin rectangles of a given order. Thus, in practice, it is enough to focus on the case $r\leq s\leq n$ to determine the number of isotopism classes in $\PLR(r,s,n;m)$, whereas the number of main classes splits into three cases: (a) $r<s<n$; (b) $r=s<n$; and (c) $r=s=n$. In (a), the parastrophism group $S_{r,s,n}$ is only formed by the trivial permutation $\mathrm{Id}\in S_3$ and hence, the number of main classes coincides with that of isotopism classes. In order to deal with (b) and (c), and keeping in mind Theorem~\ref{theo:Conjugacy}, let us define the following two sets for each pair of permutations $\beta,\gamma\in S_r$

\[C_1(\beta,\gamma):=\{(\alpha,\beta',\gamma')\in \mathfrak{I}_{r,r,n}\colon \alpha\beta'\sim\beta \text{ and } \gamma'\sim\gamma\}.\]
\[C_2(\gamma):=\{(\alpha,\beta,\gamma')\in \mathfrak{I}_{r,r,r}\colon \alpha\beta\gamma'\sim\gamma\}.\]

The next result holds straightforwardly from Theorem \ref{theo:isomisot}, Lemma \ref{lmm_Conjugacy}, and \eqref{eq:permcyc}.

\begin{theo}\label{theo:MC} Let $r,n\geq 1$ and $m\leq r^2$. The following statements hold.
\begin{enumerate}
\item If $n\neq r$, then the number of main classes in $\mathcal{PLR}(r,r,n;m)$ is
\[\#\mathrm{MC}(r,r,n;m)=\frac {\#\mathrm{Isot}(r,r,n;m)}2 + \frac 1{2\,r!^2\,n!}\sum_{(\beta,\gamma)\in S_s\times S_n} |C_1(\beta)|\, \Delta^{(12)}_m(z_{\beta},z_{\gamma}).\]
\item The number of main classes in $\mathcal{PLR}(r,r,r;m)$ is
\begin{align*}
\#\mathrm{MC}(r,r,r;m)= {} & \frac {\#\mathrm{Isot}(r,r,r;m)}6 + \frac 1{2\,r!^3}\sum_{(\beta,\gamma)\in S_s\times S_n} |C_1(\beta)|\, \Delta^{(12)}_m(z_{\beta},z_{\gamma})\\
 & + \frac 1{3\,r!^3}\sum_{\gamma\in S_n} |C_2(\gamma)|\, \Delta^{(123)}_m(z_{\gamma}).
\end{align*}
\end{enumerate}
\end{theo}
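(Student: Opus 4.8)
The plan is to derive both identities from the Burnside-type formula of Theorem~\ref{theo:isomisot}(3), namely $\#\mathrm{MC}(r,s,n;m)=\frac{1}{r!\,s!\,n!\,\#S_{r,s,n}}\sum_{(\Theta,\pi)\in\mathfrak{P}_{r,s,n}}\#\PLR((\Theta,\pi);m)$, by splitting the inner sum according to the parastrophe $\pi$ and collapsing each block with the conjugacy invariance from Lemma~\ref{lmm_Conjugacy} and Theorems~\ref{theo:Conjugacy_MW} and~\ref{theo:Conjugacy}. First I would determine $S_{r,s,n}$ in the two cases: when $r=s\neq n$ only $\mathrm{Id}$ and $(12)$ fix the ordered triple $(r,r,n)$, so $\#S_{r,r,n}=2$, whereas when $r=s=n$ the whole of $S_3$ fixes $(r,r,r)$, so $\#S_{r,r,r}=6$; these values fix the outer prefactors $\frac{1}{2\,r!^2 n!}$ and $\frac{1}{6\,r!^3}$. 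The parastrophe $\pi=\mathrm{Id}$ contributes $\sum_{\Theta\in\mathfrak{I}_{r,s,n}}\#\PLR((\Theta,\mathrm{Id});m)$, which by Theorem~\ref{theo:isomisot}(2) equals $r!^2 n!\,\#\mathrm{Isot}(r,r,n;m)$ (resp.\ $r!^3\,\#\mathrm{Isot}(r,r,r;m)$); dividing by the prefactor produces the leading $\frac12\#\mathrm{Isot}$ (resp.\ $\frac16\#\mathrm{Isot}$) term of each statement.

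For the transposition block I would use Theorem~\ref{theo:Conjugacy} to replace $\#\PLR(((\alpha,\beta,\gamma),(12));m)$ by $\Delta^{(12)}_m(z_{\alpha\beta},z_\gamma)$, which depends only on the conjugacy classes of $\alpha\beta$ and of $\gamma$. In case~(1) only $(12)$ lies in $S_{r,r,n}$, so the block is $\sum_{\Theta}\Delta^{(12)}_m(z_{\alpha\beta},z_\gamma)$ and keeps the prefactor $\frac{1}{2\,r!^2 n!}$; in case~(2), Theorem~\ref{theo:Conjugacy} shows the three transpositions $(12),(13),(23)$ give equal totals as $\Theta$ ranges over $\mathfrak{I}_{r,r,r}$, so the combined block is $3\sum_{\Theta}\Delta^{(12)}_m(z_{\alpha\beta},z_\gamma)$ and the factor $3$ merges with $\frac{1}{6\,r!^3}$ into $\frac{1}{2\,r!^3}$. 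In both cases I would then regroup this sum by the conjugacy classes of $\alpha\beta$ and of $\gamma$: by definition $C_1(\beta,\gamma)$ is precisely the set of isotopisms $(\alpha,\beta',\gamma')$ with $\alpha\beta'\sim\beta$ and $\gamma'\sim\gamma$, so $|C_1(\beta,\gamma)|$ is the multiplicity with which $\Delta^{(12)}_m(z_\beta,z_\gamma)$ occurs; summing over one representative pair $(\beta,\gamma)$ per pair of conjugacy classes, with the class sizes given by \eqref{eq:permcyc}, reproduces the stated second term.

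The $3$-cycle block appears only in case~(2). Here Theorem~\ref{theo:Conjugacy_MW} (equivalently the second half of Theorem~\ref{theo:Conjugacy}) shows $\#\PLR(((\alpha,\beta,\gamma),(123));m)=\Delta^{(123)}_m(z_{\alpha\beta\gamma})$ depends only on $z_{\alpha\beta\gamma}$, and that the two $3$-cycles $(123),(132)$ contribute equal totals, yielding a factor $2$ that merges with $\frac{1}{6\,r!^3}$ into $\frac{1}{3\,r!^3}$. Since $C_2(\gamma)$ is the set of isotopisms whose triple product $\alpha\beta\gamma'$ is conjugate to $\gamma$, the cardinality $|C_2(\gamma)|$ is the multiplicity of $\Delta^{(123)}_m(z_\gamma)$, and summing over representatives $\gamma$ (again weighted via \eqref{eq:permcyc}) yields the third term.

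The main obstacle I anticipate is the multiplicity bookkeeping, i.e.\ proving that $|C_1(\beta,\gamma)|$ and $|C_2(\gamma)|$ are exactly the fibre sizes of the maps $(\alpha,\beta',\gamma')\mapsto(z_{\alpha\beta'},z_{\gamma'})$ and $(\alpha,\beta,\gamma')\mapsto z_{\alpha\beta\gamma'}$. The crucial numerical input is that a fixed product value has exactly $r!$ preimages among pairs $(\alpha,\beta')$ and $r!^2$ preimages among triples $(\alpha,\beta,\gamma')$, so each fibre size is a product of conjugacy-class sizes times $r!$ or $r!^2$; one must then check that, after $\Delta^{(12)}_m$ and $\Delta^{(123)}_m$ are extracted as functions of cycle structures, the representative-sums recombine correctly against the permutation counts of \eqref{eq:permcyc}. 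The remaining subtlety is the relabelling argument behind Theorems~\ref{theo:Conjugacy_MW} and~\ref{theo:Conjugacy} that produces the factors $3$ and $2$ by permuting the parastrophe within its conjugacy class.
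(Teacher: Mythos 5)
Your proposal is correct and takes essentially the same route the paper intends: the paper gives no written proof beyond asserting that the theorem ``holds straightforwardly'' from Theorem~\ref{theo:isomisot}, Lemma~\ref{lmm_Conjugacy}, and \eqref{eq:permcyc}, and your Burnside split by parastrophe, the factor-$3$ and factor-$2$ collapses of the transposition and $3$-cycle blocks via Theorem~\ref{theo:Conjugacy}, and the fibre counts $|C_1(\beta,\gamma)|=r!\cdot|\mathrm{cl}(\beta)|\cdot|\mathrm{cl}(\gamma)|$ and $|C_2(\gamma)|=r!^2\cdot|\mathrm{cl}(\gamma)|$ (with class sizes from \eqref{eq:permcyc}) are exactly the omitted bookkeeping. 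You also resolved the statement's notational slips in the only way that makes the formula consistent, reading $|C_1(\beta)|$ as $|C_1(\beta,\gamma)|$ and the sums over $S_s\times S_n$ (with $s=r$) as sums over cycle-structure representatives, which is the correct interpretation.
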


\section{Computational results}\label{sec:comp_results}

\paragraph{\bf Inclusion-exclusion method}
For small graphs $G$, Tables~\ref{ta:polyP1} and~\ref{ta:polyP2} list the polynomial $P(G)=P(G;r,s,n)$ in Theorem~\ref{th:isosplit}.  These polynomials were computed using a C++ program, using \texttt{geng} (packaged with \texttt{nauty} \cite{McKay1981,nauty,McKayPiperno2014}) to generate a list of isolated-vertex-free non-isomorphic graphs (e.g.\ ``\texttt{geng -d1 3}'' generates $3$-vertex isomorphism class representatives with minimum degree $1$) and \texttt{bliss} \cite{JunttilaKaski2007} to compute their automorphism group size.  The notation $\overline{abc}$ is shorthand for the sum of the monic monomials with variables $r$, $s$, and $n$ and exponents $a$, $b$, and $c$.  For example, $\overline{210}=r^2s+r^2n+s^2r+s^2n+n^2r+n^2s$ and $2\,\overline{100}=2(r+s+n)$.

By Lemma~\ref{lm:maxdeg}, substituting the data in Tables~\ref{ta:polyP1} and~\ref{ta:polyP2} into the formula in Theorem~\ref{th:isosplit} gives a formula for $f_m(r,s,n)$ containing all terms of degree $\geq 3m-9$; unlisted graphs $G$ have $v-c(G) \geq 5$, and thus contribute to terms in the polynomial with degree at most $3m-10$. In this regard, the following result generalizes Theorem 4.7 in \cite{Falcon2018}, which only deals with the case $m\leq 6$.


\begin{theo}\label{th:asmpt}
Let $m$ be a positive integer. Then, $
f_m(r,s,n)=
(rsn)^m
+ \binom{m}{2} (rsn)^{m-1}(2-\overline{100})
+ \binom{m}{3} (rsn)^{m-2}(14-12\,\overline{100}+6\,\overline{110}+2\,\overline{200})
+ \binom{m}{4} (rsn)^{m-3}(198-228\,\overline{100}+198\,\overline{110}-84\,\overline{111}+72\,\overline{200}-36\,\overline{210}-12\,\overline{211}+6\,\overline{221}-6\,\overline{300}+3\,\overline{311})
+ \binom{m}{5} (rsn)^{m-4}(-6360\,\overline{100}+7440\,\overline{110}-6080\,\overline{111}+2880\,\overline{200}-2520\,\overline{210}+820\,\overline{211}+480\,\overline{220}+360\,\overline{221}-180\,\overline{222}-480\,\overline{300}+240\,\overline{310}+160\,\overline{311}-80\,\overline{321}+24\,\overline{400}-20\,\overline{411})
+ \binom{m}{6} (rsn)^{m-5}(-13170\,\overline{211}+17340\,\overline{221}-15990\,\overline{222}+7580\,\overline{311}-7050\,\overline{321}+3300\,\overline{322}+1520\,\overline{331}+180\,\overline{332}-90\,\overline{333}-1740\,\overline{411}+870\,\overline{421}+90\,\overline{422}-45\,\overline{432}+130\,\overline{511}-15\,\overline{522})
+ \binom{m}{7} (rsn)^{m-6}(-10920\,\overline{322}+15540\,\overline{332}-15120\,\overline{333}+7350\,\overline{422}-7140\,\overline{432}+3570\,\overline{433}+1680\,\overline{442}-2100\,\overline{522}+1050\,\overline{532}+210\,\overline{622})
+ \binom{m}{8} (rsn)^{m-7}(-3360\,\overline{433}+5040\,\overline{443}-5040\,\overline{444}+2520\,\overline{533}-2520\,\overline{543}+1260\,\overline{544}+630\,\overline{553}-840\,\overline{633}+420\,\overline{643}+105\,\overline{733})
\ +$ some polynomial of degree $\leq 3m-10$.
\end{theo}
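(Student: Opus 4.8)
The plan is to start from the closed form in Theorem~\ref{th:isosplit}, which already expresses $f_m(r,s,n)$ as a sum over isomorphism classes of isolated-vertex-free graphs $G$, each contributing $\binom{m}{v}(rsn)^{m-v+1}\frac{v!}{|\mathrm{Aut}(G)|}(-1)^e P(G)$, where $v=|G|$ and $e=|E(G)|$. The whole statement is simply this sum truncated to its terms of degree $\geq 3m-9$, so the first step is to pin down exactly which graphs can contribute such terms. By Lemma~\ref{lm:maxdeg}, the contribution of $G$ has degree at most $3m-2(v-c(G))$; hence any $G$ with $v-c(G)\geq 5$ contributes only to degree $\leq 3m-10$ and can be swept into the error term. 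Therefore the part to be evaluated exactly is a \emph{finite} sum, ranging over all isolated-vertex-free graphs $G$ with $v-c(G)\leq 4$.

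Next I would enumerate these graphs. Writing $G$ as a disjoint union of connected components $G_i$ on $w_i\geq 2$ vertices, the constraint $v-c(G)=\sum_i(w_i-1)\leq 4$ forces each $w_i\leq 5$ and caps the number of components at $4$ (the extreme case being $4K_2$). Thus the connected building blocks are precisely the connected graphs on $2,3,4,5$ vertices, whose polynomials $P(G)$ are recorded in Tables~\ref{ta:polyP1} and~\ref{ta:polyP2}. The disconnected contributions then follow mechanically from the multiplicativity property of $P$ (the lemma immediately preceding Lemma~\ref{lm:maxdeg}): a disjoint union satisfies $P(G_1\cup G_2)=rsn\,P(G_1)P(G_2)$, so a graph with components $G_1,\ldots,G_c$ has $P(G)=(rsn)^{c-1}\prod_i P(G_i)$, while its weight $v!/|\mathrm{Aut}(G)|$ factors into the per-component automorphism sizes together with the multiplicities $k_i!$ for repeated isomorphic components.

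With this inventory fixed, the remaining work is organizational: group the contributions by $v$ (equivalently by the binomial coefficient $\binom{m}{v}$, which runs from $\binom{m}{2}$ up to $\binom{m}{8}$, the top value coming from $4K_2$ on $v=8$ vertices), substitute the tabulated $P(G)$, multiply by the sign $(-1)^e$ and the automorphism weight, collect like monomials, and discard everything of degree $\leq 3m-10$. Theorem~\ref{th:poly} guarantees that what survives is symmetric in $r,s,n$, which is exactly what lets the coefficients be packaged in the $\overline{abc}$ notation and furnishes a strong internal check: each $\overline{abc}$ must emerge with one common coefficient across all of its monomials.

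The main obstacle is not conceptual but the bulk and delicacy of the bookkeeping. There are many connected graphs on five vertices, each carrying a $P(G)$ that is itself a sum over all four-edge-colorings, and the disconnected assemblies multiply these together; a single sign slip in $(-1)^e$, a miscounted automorphism group, or a dropped multiplicity $k_i!$ will corrupt one of the large integer coefficients (such as the $-6360$ or $17340$ in the statement). I would therefore carry out the enumeration and collection by computer, as the paper does via \texttt{geng} and \texttt{bliss}, and use the enforced symmetry together with agreement with the previously established low-$m$ cases (Theorem~4.7 of \cite{Falcon2018}, covering $m\leq 6$) as independent verification that the tabulated contributions have been assembled correctly.
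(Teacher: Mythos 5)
Your proposal takes essentially the same route as the paper: the paper also obtains Theorem~\ref{th:asmpt} by substituting the tabulated polynomials $P(G)$ of Tables~\ref{ta:polyP1} and~\ref{ta:polyP2} (which cover precisely the isolated-vertex-free graphs with $v-c(G)\leq 4$) into Theorem~\ref{th:isosplit}, invoking Lemma~\ref{lm:maxdeg} to relegate every graph with $v-c(G)\geq 5$ to the degree-$\leq 3m-10$ remainder, and performing the coefficient collection by computer. Your supporting observations --- each component has at most $5$ vertices, at most four components occur with $4K_2$ extremal at $v=8$, $P$ is multiplicative over disjoint unions, and symmetry plus the $m\leq 6$ formulas of \cite{Falcon2018} serve as checks --- are all correct and consistent with the paper's argument.
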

Theorem~\ref{th:asmpt} is exact for $m\leq 5$ since Tables~\ref{ta:polyP1} and~\ref{ta:polyP2} contain all graphs with no isolated vertices with up to $5$ vertices, and when $v \geq 6$, graphs make a zero contribution to Theorem~\ref{th:isosplit} since the binomial $\binom{m}{v}=0$.

\paragraph{\bf Chromatic polynomial method}
We use Theorem~\ref{th:PLRcomp} to compute exact formulas for $f_m(r,s,n)$ for all $m \leq 13$ which are available from \cite{FalconStonesMendeley}. They corroborate in particular the formulas shown in \cite{Falcon2018} for $m\leq 6$. The authors acknowledge the use of \texttt{GAP} \cite{GAP}, the \texttt{GAP} package \texttt{GRAPE} \cite{GRAPE} (which uses \texttt{nauty}), and the Tutte polynomial software \texttt{tutte\_bhkk} \cite{BjoklundHusfeldtKaskiKoivisto2008} (available from \url{github.com/thorehusfeldt/tutte_bhkk}) for these computations.


\paragraph{\bf Sade's method}
We implement Algorithm~\ref{al:Sade} in C++ using \texttt{nauty} for graph isomorphism and \text{GMP} \cite{GMP} for arbitrary precision arithmetic, which we use to compute $\#\PLR(r,s,n;m)$ for all $r,s,n \leq 7$, and for $r,s \leq 6$ when $n=8$ (for all $0 \leq m \leq rs$). Our computations for $r,s,n \leq 6$ corroborate Tables 2 through 5 in \cite{Falcon2018}. The remaining cases are listed here in Tables~\ref{ta:rs7a} through~\ref{ta:rs8a}.


\paragraph{\bf Algebraic geometry method}

We implement Theorem \ref{theoPLR} in {\tt Singular} \cite{Decker2015} and {\tt Minion} \cite{Gent2006} to determine the values $\Delta(z_1,z_2,z_3)$, for all $(z_1,z_2,z_3)\in \mathcal{CS}_r\times \mathcal{CS}_s \times \mathcal{CS}_n$ satisfying the conditions of Lemma \ref{lemmLCM}, when $r,s,n\leq 6$. Theorem~\ref{th:isomisot} has then be applied to obtain the corresponding numbers of isomorphism and isotopism classes of partial Latin rectangles, as listed in Tables \ref{tableIsom} through \ref{tableIsot2}.

The number of main classes of partial Latin rectangles in $\PLR(r,s,n)$ according to their weights is given in Tables \ref{tableMainClasses} and \ref{tableMainClasses1} when $2 \leq r\leq s\leq n \leq 6$. We include only the cases in which $r$, $s$, and $n$ are not pairwise distinct; otherwise, the number of main classes and isotopism classes coincide.


\paragraph{\bf Constructive enumeration}

It is also possible to enumerate constructively the number of isotopism and main classes in $\PLR(r,s,n;m)$.  We simply extend all representative weight-$(m-1)$ partial Latin rectangles by one entry in all possible ways, and throw away those that belong to the same class as an already discovered partial Latin rectangle.  To compare isotopism and main class equivalence, it is enough, for instance, to generate a graph similar to those proposed in \cite{McKayMeynertMyrvold2007}.

For fixed $m \geq 1$, provided $r \geq m$, $s \geq m$, and $n \geq m$, the number of isotopism classes and main classes in the set $\PLR(r,s,n;m)$ do not vary with $r$, $s$ and $n$, which amounts to adding empty rows, empty columns, or unused symbols.  To compute these numbers, we use the above constructive method, but allow the possibility of introducing new rows, columns, and/or symbols when extending weight-$(m-1)$ partial Latin rectangles.  We perform this enumeration for $m \leq 11$, and the results are given in Table~\ref{ta:unbounded}.  The results for main classes is consistent with those independently obtained in \cite{DietrichWanless2018, WanlessWebb2017}, and moreover, \cite{DietrichWanless2018} also computes the number of main classes for $m=12$.

Direct constructive enumeration of isomorphism classes is infeasible, since the numbers grow too quickly.  Moreover, isotopic partial Latin rectangles may have different-sized isomorphism classes\footnote{In the context of Latin squares, this led to \cite[Th.~2(i)]{McKayMeynertMyrvold2007} being false; which is acknowledged in a corrected version of \cite{McKayMeynertMyrvold2007} on McKay's website \url{http://users.cecs.anu.edu.au/~bdm/papers/ls_final_corr.pdf}.}, so we cannot easily derive the number of isomorphism classes within a isotopism class, which thwarts modifying the approach we use for enumerating isotopism classes to enumerating isomorphism classes.  Instead, using an algebraic geometry method like in Section~\ref{sec:alg_geo}, we enumerate isomorphism classes for $m \leq 6$ in \cite[Table 2]{FalconStones2015}.


\section{Verification}\label{sec:verify}

The authors have made efforts to ensure the numbers and formulas presented here are as bug-free as possible; we document these efforts in this section. First, the various source codes used and their output are available from \cite{FalconStonesMendeley}.  Next, where feasible, computations have been independently performed, using different techniques and different software.  Where possible, we have also cross-checked the results of the enumeration methods.
\begin{itemize}
 \item The computation of $\#\PLR(r,s,n;m)$ for all $r,s,n \leq 7$ has been performed using both the algebraic geometry method (except for $(s,n) \in \{(6,7),(7,7)\}$) and Sade's method.
 \item The number of isotopism classes and main classes has been computed using both the algebraic geometry method (for $r,s,n \leq 6$) and constructive enumeration (for $r,s,n \leq 5$).
 \item For $m \leq 13$, the results of the computation of $\#\PLR(r,s,n;m)$ have been cross-checked against the computed polynomials $f_m(r,s,n)$.  Thus for $854$ quadruples $(r,s,n;m)$ the computations agreed exactly.
\end{itemize}

In addition to cross-checking computational results, we check the divisibility of the numbers computed using the following theorem.  More specifically, we check the exact formulas for $f_m(r,s,n)$, for $m \leq 13$, satisfy Theorem~\ref{th:div} whenever $k \in \{1,\ldots,10\}$ and $r,s,n \in \{k+1,\ldots,k+10\}$.

\begin{theo}\label{th:div}
For all $r,s,n,m \geq 1$ and $k \geq 0$, we have
\begin{align*}
\#\PLR(r,s,n;m) & \equiv \#\PLR(k,s,n;m) \pmod {r-k} \text{ when } r \geq k+1, \\
\#\PLR(r,s,n;m) & \equiv \#\PLR(r,k,n;m) \pmod {s-k} \text{ when } s \geq k+1, \text{and} \\
\#\PLR(r,s,n;m) & \equiv \#\PLR(r,s,k;m) \pmod {n-k} \text{ when } n \geq k+1.
\end{align*}
\end{theo}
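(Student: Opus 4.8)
The plan is to give a direct group-action argument rather than relying on the polynomiality of $f_m$ from Theorem~\ref{th:poly}: since $\#\PLR(r,s,n;m)=f_m(r,s,n)/m!$, the integer-coefficient polynomial $f_m$ only yields that $r-k$ divides $m!\,(\#\PLR(r,s,n;m)-\#\PLR(k,s,n;m))$, which is weaker than the claimed congruence because of the spurious factor $m!$. First I would reduce to the row statement. Since parastrophisms give $\#\PLR(r,s,n;m)=\#\PLR(d_{\pi(1)},d_{\pi(2)},d_{\pi(3)};m)$ for every $\pi\in S_3$, the column and symbol congruences follow from the row congruence by permuting the roles of $r$, $s$, and $n$; so it suffices to prove the first line.

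Set $d:=r-k\geq 1$ and write $N_t:=\#\PLR(t,s,n;m)$. Appending $r-k$ empty rows to a $k\times s$ partial Latin rectangle is a bijection onto the set of $L\in\PLR(r,s,n;m)$ all of whose nonempty rows lie in $\{1,\dots,k\}$, so $N_k$ equals the number of such $L$. Consequently $N_r-N_k=|Y|$, where $Y\subseteq\PLR(r,s,n;m)$ is the set of partial Latin rectangles having at least one nonempty row among the ``extra'' rows $k+1,\dots,r$. The goal thus becomes to show $d\mid|Y|$.

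Next I would let the cyclic group $\mathbb{Z}_d$ act on $\PLR(r,s,n;m)$ by cyclically rotating the extra rows $k+1,\dots,r$ while fixing rows $1,\dots,k$. Each such rotation is a row permutation, i.e.\ an isotopism, so it sends partial Latin rectangles to partial Latin rectangles, preserves the weight $m$, and permutes the nonempty extra rows among themselves; hence it stabilizes $Y$. It then suffices to prove that this action on $Y$ is \emph{free}, since then every orbit has size exactly $d$ and $|Y|\equiv 0\pmod d$.

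The crux, and the step I expect to be the main obstacle precisely because $d$ need not be prime, is establishing freeness. Suppose a nonidentity rotation $j\in\mathbb{Z}_d$ fixes some $L\in Y$. The permutation $i\mapsto i+j$ of the $d$ extra rows decomposes into $\gcd(j,d)$ cycles, each of length $d/\gcd(j,d)\geq 2$ because $j\neq 0$; thus \emph{every} extra row lies in a cycle of length at least two. Being fixed by $j$ forces all extra rows within a common cycle to be identical. Since $L\in Y$, some extra row is nonempty, and it would then be forced equal to a different extra row, producing two identical nonempty rows and hence a repeated symbol in some column, contradicting the partial Latin rectangle property. Therefore no nonidentity rotation fixes any element of $Y$, the action is free, and $d=r-k$ divides $|Y|=N_r-N_k$. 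This proves the first congruence, and the other two follow by parastrophy as above.
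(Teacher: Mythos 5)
Your proof is correct and takes essentially the same approach as the paper: the paper likewise acts by the cyclic group generated by a single long cycle on the ``extra'' lines (there, an $(s-k)$-cycle on columns, with rows and symbols handled by symmetry just as you handle columns and symbols by parastrophy), and observes via the Orbit-Stabilizer Theorem that a non-trivial stabilizer forces the permuted lines to be empty, so the small orbits biject with $\PLR(r,k,n;m)$. Your explicit freeness argument on the complement $Y$, including the cycle-length analysis needed because $d=r-k$ need not be prime, is a careful rephrasing of exactly that step.
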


\begin{proof}
Firstly, we prove the second claim.  We act on $\PLR(r,s,n;m)$ by permuting the columns using the group $G$ of isotopisms generated by $(\mathrm{Id},\beta,\mathrm{Id})$ for an $(m-k)$-cycle $\beta$.  By the Orbit-Stabilizer Theorem, orbits have size $m-k$ unless they contain partial Latin rectangles that admit a non-trivial autotopism in $G$.  This is only possible if the columns permuted by $\beta$ are empty.  Hence, the orbits of size less than $m-k$ together form $\PLR(r,k,n;m)$ by deleting the columns permuted by $\beta$.  The same argument works for rows and symbols, which gives the first and third claims.
\end{proof}

%

\section*{Acknowledgements} Stones was supported by her NSFC Research Fellowship for International Young Scientists (grant numbers: 11450110409, 11550110491), NSF China grant 61170301, and the Thousand Youth Talents Plan in Tianjin.  Stones would also like to acknowledge the use of \url{math.stackexchange.com} for discussing problems arising in this work.  Thanks to Zhuanhao Wu for parallelizing and improving the memory management in Stones's C++ code.  Thanks also to Daniel Kotlar for assistance in computing autotopism groups, which is leading to the papers \cite{FalconKotlarStones2,FalconKotlarStones}

\bibliographystyle{amsplain}
\bibliography{LatinB4}

\appendix
\section{Glossary of symbols}\label{sec:symbols}

\noindent \begin{tabular}{ll}
$S_n$ & The symmetric group on $n$ elements.\\
$\mathfrak{I}_{r,s,n}$ & The isotopism group $S_r\times S_s\times S_n$.\\
$S_{r,s,n}$ & The parastrophism group defined in Section~\ref{sec:intro}. \\
$\mathfrak{P}_{r,s,n}$ & The paratopism group $S_r\times S_s\times S_n\rtimes S_{r,s,n}$.\\
$\PLR(r,s,n)$ & The set of $r\times s$ partial Latin rectangles on $[n]\cup\{\cdot\}$.\\
$\PLR(r,s,n;m)$ & The subset of partial Latin rectangles in $\PLR(r,s,n)$ of weight $m$.\\
$\PLR((\Theta,\pi))$ & The set of partial Latin rectangles having $(\Theta,\pi)$ as autoparatopism.\\
$\PLR((\Theta,\pi);m)$ & The subset of partial Latin rectangles in $\PLR((\Theta,\pi))$ of weight $m$.\\
$\PLS(n;m)$ & The set of partial Latin squares of order $n$ and weight $m$.\\
$\mathrm{Isom}(n;m)$ & The set of isomorphism classes of $\PLS(n;m)$.\\
$\mathrm{Isot}(r,s,n;m)$ & The set of isotopism classes of $\PLR(r,s,n;m)$.\\
$\mathrm{MC}(r,s,n;m)$ & The set of main classes of $\PLR(r,s,n;m)$.\\
\end{tabular}

\section{Tables}\label{sec:tables}



\def\blockone{
\tikz[scale=1.5]{
\coordinate (P1) at (0,0);
\fill (P1) circle(2pt);
}
}

\def\blocktwo{
\tikz[scale=1.5]{
\coordinate (P1) at (0,0);
\coordinate (P2) at (\smallgraphnodedistance,0);
\draw (P1) -- (P2);
\fill (P1) circle(2pt);
\fill (P2) circle(2pt);
}
}

\def\blockthree{
\tikz[scale=1.5]{
\coordinate (P1) at (0,0);
\coordinate (P2) at (\smallgraphnodedistance,0);
\coordinate (P3) at (2*\smallgraphnodedistance,0);
\draw (P1) -- (P2) -- (P3);
\draw (P1) to[bend left=60] (P3);
\fill (P1) circle(2pt);
\fill (P2) circle(2pt);
\fill (P3) circle(2pt);
}
}

\def\blockfour{
\tikz[scale=1.5]{
\coordinate (P1) at (0,0);
\coordinate (P2) at (\smallgraphnodedistance,0);
\coordinate (P3) at (0,-\smallgraphnodedistance);
\draw (P2) -- (P1) -- (P3);
\fill (P1) circle(2pt);
\fill (P2) circle(2pt);
\fill (P3) circle(2pt);
}
}

\def\blockfive{
\tikz[scale=1.5]{
\coordinate (P1) at (0,0);
\coordinate (P2) at (\smallgraphnodedistance,0);
\coordinate (P3) at (2*\smallgraphnodedistance,0);
\coordinate (P4) at (3*\smallgraphnodedistance,0);
\draw (P1) -- (P2) -- (P3) -- (P4);
\draw (P1) to[bend left=60] (P3);
\draw (P2) to[bend left=60] (P4);
\draw (P1) to[bend left=60] (P4);
\fill (P1) circle(2pt);
\fill (P2) circle(2pt);
\fill (P3) circle(2pt);
\fill (P4) circle(2pt);
}
}

\def\blocksix{
\tikz[scale=1.5]{
\coordinate (P1) at (0,0);
\coordinate (P2) at (\smallgraphnodedistance,0);
\coordinate (P3) at (2*\smallgraphnodedistance,0);
\coordinate (P4) at (0,-\smallgraphnodedistance);
\draw (P1) -- (P2) -- (P3);
\draw (P1) to[bend left=60] (P3);
\draw (P1) -- (P4);
\fill (P1) circle(2pt);
\fill (P2) circle(2pt);
\fill (P3) circle(2pt);
\fill (P4) circle(2pt);
}
}

\def\blockseven{
\tikz[scale=1.5]{
\coordinate (P1) at (0,0);
\coordinate (P2) at (\smallgraphnodedistance,0);
\coordinate (P3) at (0,-\smallgraphnodedistance);
\coordinate (P4) at (2*\smallgraphnodedistance,-\smallgraphnodedistance);
\draw (P1) -- (P2);
\draw (P1) -- (P3) -- (P4);
\fill (P1) circle(2pt);
\fill (P2) circle(2pt);
\fill (P3) circle(2pt);
\fill (P4) circle(2pt);
}
}

\def\blockeight{
\tikz[scale=1.5]{
\coordinate (P1) at (0,0);
\coordinate (P2) at (\smallgraphnodedistance,0);
\coordinate (P3) at (0,-\smallgraphnodedistance);
\coordinate (P4) at (\smallgraphnodedistance,-\smallgraphnodedistance);
\draw (P1) -- (P2);
\draw (P1) -- (P3) -- (P4) -- (P2);
\fill (P1) circle(2pt);
\fill (P2) circle(2pt);
\fill (P3) circle(2pt);
\fill (P4) circle(2pt);
}
}

\def\blocknine{
\tikz[scale=1.5]{
\coordinate (P1) at (0,0);
\coordinate (P2) at (\smallgraphnodedistance,0);
\coordinate (P3) at (2*\smallgraphnodedistance,0);
\coordinate (P4) at (3*\smallgraphnodedistance,0);
\coordinate (P5) at (4*\smallgraphnodedistance,0);
\draw (P1) -- (P2) -- (P3) -- (P4) -- (P5);
\draw (P1) to[bend left=60] (P3);
\draw (P2) to[bend left=60] (P4);
\draw (P3) to[bend left=60] (P5);
\draw (P1) to[bend left=60] (P4);
\draw (P2) to[bend left=60] (P5);
\draw (P1) to[bend left=60] (P5);
\fill (P1) circle(2pt);
\fill (P2) circle(2pt);
\fill (P3) circle(2pt);
\fill (P4) circle(2pt);
\fill (P5) circle(2pt);
}
}

\def\blockten{
\tikz[scale=1.5]{
\coordinate (P1) at (0,0);
\coordinate (P2) at (\smallgraphnodedistance,0);
\coordinate (P3) at (2*\smallgraphnodedistance,0);
\coordinate (P4) at (3*\smallgraphnodedistance,0);
\coordinate (P5) at (0,-\smallgraphnodedistance);
\draw (P5) -- (P1) -- (P2) -- (P3) -- (P4);
\draw (P1) to[bend left=60] (P3);
\draw (P2) to[bend left=60] (P4);
\draw (P1) to[bend left=60] (P4);
\fill (P1) circle(2pt);
\fill (P2) circle(2pt);
\fill (P3) circle(2pt);
\fill (P4) circle(2pt);
\fill (P5) circle(2pt);
}
}

\def\blockeleven{
\tikz[scale=1.5]{
\coordinate (P1) at (0,0);
\coordinate (P2) at (\smallgraphnodedistance,0);
\coordinate (P3) at (2*\smallgraphnodedistance,0);
\coordinate (P4) at (3*\smallgraphnodedistance,-\smallgraphnodedistance);
\coordinate (P5) at (0,-\smallgraphnodedistance);
\draw (P5) -- (P1) -- (P2) -- (P3);
\draw (P1) to[bend left=60] (P3);
\draw (P5) -- (P4);
\fill (P1) circle(2pt);
\fill (P2) circle(2pt);
\fill (P3) circle(2pt);
\fill (P4) circle(2pt);
\fill (P5) circle(2pt);
}
}

\def\blocktwelve{
\tikz[scale=1.5]{
\coordinate (P1) at (0,0);
\coordinate (P2) at (\smallgraphnodedistance,0);
\coordinate (P3) at (2*\smallgraphnodedistance,0);
\coordinate (P4) at (\smallgraphnodedistance,-\smallgraphnodedistance);
\coordinate (P5) at (0,-\smallgraphnodedistance);
\draw (P5) -- (P1) -- (P2) -- (P3);
\draw (P1) to[bend left=60] (P3);
\draw (P5) -- (P4) -- (P2);
\fill (P1) circle(2pt);
\fill (P2) circle(2pt);
\fill (P3) circle(2pt);
\fill (P4) circle(2pt);
\fill (P5) circle(2pt);
}
}

\def\blockthirteen{
\tikz[scale=1.5]{
\coordinate (P1) at (0,0);
\coordinate (P2) at (\smallgraphnodedistance,0);
\coordinate (P3) at (2*\smallgraphnodedistance,0);
\coordinate (P4) at (0,-\smallgraphnodedistance);
\coordinate (P5) at (0,-2*\smallgraphnodedistance);
\draw (P1) -- (P2) -- (P3);
\draw (P1) to[bend left=60] (P3);
\draw (P1) -- (P4) -- (P5);
\draw (P1) to[bend right=60] (P5);
\fill (P1) circle(2pt);
\fill (P2) circle(2pt);
\fill (P3) circle(2pt);
\fill (P4) circle(2pt);
\fill (P5) circle(2pt);
}
}

\def\blockfourteen{
\tikz[scale=1.5]{
\coordinate (P1) at (0,0);
\coordinate (P2) at (\smallgraphnodedistance,0);
\coordinate (P3) at (2*\smallgraphnodedistance,0);
\coordinate (P4) at (0,-\smallgraphnodedistance);
\coordinate (P5) at (\smallgraphnodedistance,-2*\smallgraphnodedistance);
\draw (P1) -- (P2) -- (P3);
\draw (P1) to[bend left=60] (P3);
\draw (P1) -- (P4);
\draw (P2) -- (P5);
\fill (P1) circle(2pt);
\fill (P2) circle(2pt);
\fill (P3) circle(2pt);
\fill (P4) circle(2pt);
\fill (P5) circle(2pt);
}
}

\def\blockfifteen{
\tikz[scale=1.5]{
\coordinate (P1) at (0,0);
\coordinate (P2) at (\smallgraphnodedistance,0);
\coordinate (P3) at (0,-\smallgraphnodedistance);
\coordinate (P4) at (2*\smallgraphnodedistance,-\smallgraphnodedistance);
\coordinate (P5) at (0,-2*\smallgraphnodedistance);
\draw (P1) -- (P2);
\draw (P1) -- (P3) -- (P5);
\draw (P1) to[bend right=60] (P5);
\draw (P3) -- (P4);
\fill (P1) circle(2pt);
\fill (P2) circle(2pt);
\fill (P3) circle(2pt);
\fill (P4) circle(2pt);
\fill (P5) circle(2pt);
}
}

\def\blocksixteen{
\tikz[scale=1.5]{
\coordinate (P1) at (0,0);
\coordinate (P2) at (\smallgraphnodedistance,0);
\coordinate (P3) at (0,-\smallgraphnodedistance);
\coordinate (P4) at (2*\smallgraphnodedistance,-\smallgraphnodedistance);
\coordinate (P5) at (\smallgraphnodedistance,-2*\smallgraphnodedistance);
\draw (P1) -- (P2) -- (P5);
\draw (P1) -- (P3) -- (P4);
\fill (P1) circle(2pt);
\fill (P2) circle(2pt);
\fill (P3) circle(2pt);
\fill (P4) circle(2pt);
\fill (P5) circle(2pt);
}
}

\begin{table}[htp]
\centering
\resizebox{0.8\textwidth}{!}{
$\begin{array}{ccll}
\hline
\text{block } K & \text{induced subgraph} & |\mathrm{Aut}(G_K)| & \Pi(K;n) \\
\hline
\rule{0pt}{3ex}
\begin{array}{|c|} \hline 1 \\ \hline \end{array} \rule[-1.7ex]{0pt}{0ex} & \blockone & 1 & n \\
\hline
\rule{0pt}{3ex}
\begin{array}{|cc|} \hline 1 & 1 \\ \hline \end{array} \rule[-1.7ex]{0pt}{0ex} & \blocktwo & 2 & n(n-1) \\
\hline
\rule{0pt}{3ex}
\begin{array}{|ccc|} \hline 1 & 1 & 1 \\ \hline \end{array} & \blockthree & 6 & n(n-1)(n-2) \\[0.5em]
\begin{array}{|cc|} \hline 1 & 1 \\ 1 & 0 \\ \hline \end{array} \rule[-3.2ex]{0pt}{0ex} & \raisebox{-0.5em}{\blockfour} & 1 & n(n-1)^2 \\
\hline
\begin{array}{|cccc|} \hline 1 & 1 & 1 & 1 \\ \hline \end{array} & \blockfive & 24 & n(n-1)(n-2)(n-3) \\[0.5em]
\begin{array}{|ccc|} \hline 1 & 1 & 1 \\ 1 & 0 & 0 \\ \hline \end{array} & \raisebox{-0.5em}{\blocksix} & 2 & n(n-1)^2(n-2) \\[1em]
\begin{array}{|ccc|} \hline 1 & 1 & 0 \\ 1 & 0 & 1 \\ \hline \end{array} & \raisebox{-0.5em}{\blockseven} & 2 & n(n-1)^3 \\[1em]
\begin{array}{|cc|} \hline 1 & 1 \\ 1 & 1 \\ \hline \end{array} \rule[-3.2ex]{0pt}{0ex} & \raisebox{-0.5em}{\blockeight} & 4 & n(n-1)(n^2-3n+3) \\
\hline
\begin{array}{|ccccc|} \hline 1 & 1 & 1 & 1 & 1 \\ \hline \end{array} & \blocknine & 120 & n(n-1)(n-2)(n-3)(n-4) \\[0.5em]
\begin{array}{|cccc|} \hline 1 & 1 & 1 & 1 \\ 1 & 0 & 0 & 0 \\ \hline \end{array} & \raisebox{-0.5em}{\blockten} & 6 & n(n-1)^2(n-2)(n-3) \\[1em]
\begin{array}{|cccc|} \hline 1 & 1 & 1 & 0 \\ 1 & 0 & 0 & 1 \\ \hline \end{array} & \raisebox{-0.5em}{\blockeleven} & 2 & n(n-1)^3(n-2) \\[1em]
\begin{array}{|ccc|} \hline 1 & 1 & 1 \\ 1 & 1 & 0 \\ \hline \end{array} & \raisebox{-0.5em}{\blocktwelve} & 2 & n(n-1)(n-2)(n^2-3n+3) \\[1em]
\begin{array}{|ccc|} \hline 1 & 1 & 1 \\ 1 & 0 & 0 \\ 1 & 0 & 0 \\ \hline \end{array} & \raisebox{-1em}{\blockthirteen} & 4 & n(n-1)^2(n-2)^2 \\[1.7em]
\begin{array}{|ccc|} \hline 1 & 1 & 1 \\ 1 & 0 & 0 \\ 0 & 1 & 0 \\ \hline \end{array} & \raisebox{-1em}{\blockfourteen} & 2 & n(n-1)^3(n-2) \\[1.7em]
\begin{array}{|ccc|} \hline 1 & 1 & 0 \\ 1 & 0 & 1 \\ 1 & 0 & 0 \\ \hline \end{array} & \raisebox{-1em}{\blockfifteen} & 2 & n(n-1)^3(n-2) \\[1.7em]
\begin{array}{|ccc|} \hline 1 & 1 & 0 \\ 1 & 0 & 1 \\ 0 & 1 & 0 \\ \hline \end{array} \rule[-4.8ex]{0pt}{0ex} & \raisebox{-1em}{\blocksixteen} & 1 & n(n-1)^4 \\
\hline
\end{array}$
}
\caption{The blocks $K$ with $\leq 5$ ones, along with $|\mathrm{Aut}(G_K)|$ and its chromatic polynomial $\Pi(K;n)$.}\label{ta:blocks}
\end{table}

\def\Ktwo{
\tikz{
\coordinate (P1) at (0,0);
\coordinate (P2) at (\smallgraphnodedistance,0);
\draw[thick] (P1) -- (P2);
\fill (P1) circle(2pt);
\fill (P2) circle(2pt);
}
}

\def\Ptwo{
\tikz{
\coordinate (P1) at (0,0);
\coordinate (P2) at (\smallgraphnodedistance,0);
\coordinate (P3) at (0.5*\smallgraphnodedistance,\smallgraphnodedistance);
\draw[thick] (P1) -- (P3) -- (P2);
\fill (P1) circle(2pt);
\fill (P2) circle(2pt);
\fill (P3) circle(2pt);
}
}

\def\Kthree{
\tikz{
\coordinate (P1) at (0,0);
\coordinate (P2) at (\smallgraphnodedistance,0);
\coordinate (P3) at (0.5*\smallgraphnodedistance,\smallgraphnodedistance);
\draw[thick] (P1) -- (P2) -- (P3) -- (P1);
\fill (P1) circle(2pt);
\fill (P2) circle(2pt);
\fill (P3) circle(2pt);
}
}

\def\TwoKtwo{
\tikz{
\coordinate (P1) at (0,0);
\coordinate (P2) at (\smallgraphnodedistance,0);
\coordinate (P3) at (0.5*\smallgraphnodedistance,\smallgraphnodedistance);
\coordinate (P4) at (1.5*\smallgraphnodedistance,\smallgraphnodedistance);
\draw[thick] (P1) -- (P3);
\draw[thick] (P2) -- (P4);
\fill (P1) circle(2pt);
\fill (P2) circle(2pt);
\fill (P3) circle(2pt);
\fill (P4) circle(2pt);
}
}

\def\Pthree{
\tikz{
\coordinate (P1) at (0,0);
\coordinate (P2) at (\smallgraphnodedistance,0);
\coordinate (P3) at (0.5*\smallgraphnodedistance,\smallgraphnodedistance);
\coordinate (P4) at (1.5*\smallgraphnodedistance,\smallgraphnodedistance);
\draw[thick] (P1) -- (P3) -- (P2) -- (P4);
\fill (P1) circle(2pt);
\fill (P2) circle(2pt);
\fill (P3) circle(2pt);
\fill (P4) circle(2pt);
}
}

\def\ThreeStar{
\tikz{
\coordinate (P1) at (0,0);
\coordinate (P2) at (\smallgraphnodedistance,0);
\coordinate (P3) at (0.5*\smallgraphnodedistance,\smallgraphnodedistance);
\coordinate (P4) at (1.5*\smallgraphnodedistance,\smallgraphnodedistance);
\draw[thick] (P1) -- (P3);
\draw[thick] (P2) -- (P3);
\draw[thick] (P4) -- (P3);
\fill (P1) circle(2pt);
\fill (P2) circle(2pt);
\fill (P3) circle(2pt);
\fill (P4) circle(2pt);
}
}

\def\KthreeLeaf{
\tikz{
\coordinate (P1) at (0,0);
\coordinate (P2) at (\smallgraphnodedistance,0);
\coordinate (P3) at (0.5*\smallgraphnodedistance,\smallgraphnodedistance);
\coordinate (P4) at (1.5*\smallgraphnodedistance,\smallgraphnodedistance);
\draw[thick] (P1) -- (P2) -- (P3) -- (P1);
\draw[thick] (P4) -- (P3);
\fill (P1) circle(2pt);
\fill (P2) circle(2pt);
\fill (P3) circle(2pt);
\fill (P4) circle(2pt);
}
}

\def\Cfour{
\tikz{
\coordinate (P1) at (0,0);
\coordinate (P2) at (\smallgraphnodedistance,0);
\coordinate (P3) at (0.5*\smallgraphnodedistance,\smallgraphnodedistance);
\coordinate (P4) at (1.5*\smallgraphnodedistance,\smallgraphnodedistance);
\draw[thick] (P1) -- (P3) -- (P4) -- (P2) -- (P1);
\fill (P1) circle(2pt);
\fill (P2) circle(2pt);
\fill (P3) circle(2pt);
\fill (P4) circle(2pt);
}
}

\def\KfourMinusEdge{
\tikz{
\coordinate (P1) at (0,0);
\coordinate (P2) at (\smallgraphnodedistance,0);
\coordinate (P3) at (0.5*\smallgraphnodedistance,\smallgraphnodedistance);
\coordinate (P4) at (1.5*\smallgraphnodedistance,\smallgraphnodedistance);
\draw[thick] (P1) -- (P2);
\draw[thick] (P1) -- (P3);
\draw[thick] (P1) -- (P4);
\draw[thick] (P2) -- (P4);
\draw[thick] (P3) -- (P4);
\fill (P1) circle(2pt);
\fill (P2) circle(2pt);
\fill (P3) circle(2pt);
\fill (P4) circle(2pt);
}
}

\def\Kfour{
\tikz{
\coordinate (P1) at (0,0);
\coordinate (P2) at (\smallgraphnodedistance,0);
\coordinate (P3) at (0.5*\smallgraphnodedistance,\smallgraphnodedistance);
\coordinate (P4) at (1.5*\smallgraphnodedistance,\smallgraphnodedistance);
\draw[thick] (P1) -- (P2) -- (P3) -- (P4) -- (P1);
\draw[thick] (P1) -- (P3);
\draw[thick] (P2) -- (P4);
\fill (P1) circle(2pt);
\fill (P2) circle(2pt);
\fill (P3) circle(2pt);
\fill (P4) circle(2pt);
}
}

\def\PtwoKtwo{
\tikz{
\coordinate (P1) at (0,0);
\coordinate (P2) at (\smallgraphnodedistance,0);
\coordinate (P3) at (0.5*\smallgraphnodedistance,\smallgraphnodedistance);
\coordinate (P4) at (1.5*\smallgraphnodedistance,\smallgraphnodedistance);
\coordinate (P5) at (2*\smallgraphnodedistance,0);
\draw[thick] (P1) -- (P3) -- (P2);
\draw[thick] (P4) -- (P5);
\fill (P1) circle(2pt);
\fill (P2) circle(2pt);
\fill (P3) circle(2pt);
\fill (P4) circle(2pt);
\fill (P5) circle(2pt);
}
}

\def\KthreeKtwo{
\tikz{
\coordinate (P1) at (0,0);
\coordinate (P2) at (\smallgraphnodedistance,0);
\coordinate (P3) at (0.5*\smallgraphnodedistance,\smallgraphnodedistance);
\coordinate (P4) at (1.5*\smallgraphnodedistance,\smallgraphnodedistance);
\coordinate (P5) at (2*\smallgraphnodedistance,0);
\draw[thick] (P1) -- (P2) -- (P3) -- (P1);
\draw[thick] (P4) -- (P5);
\fill (P1) circle(2pt);
\fill (P2) circle(2pt);
\fill (P3) circle(2pt);
\fill (P4) circle(2pt);
\fill (P5) circle(2pt);
}
}

\def\ThreeKtwo{
\tikz{
\coordinate (P1) at (0,0);
\coordinate (P2) at (\smallgraphnodedistance,0);
\coordinate (P3) at (0.5*\smallgraphnodedistance,\smallgraphnodedistance);
\coordinate (P4) at (1.5*\smallgraphnodedistance,\smallgraphnodedistance);
\coordinate (P5) at (2*\smallgraphnodedistance,0);
\coordinate (P6) at (2.5*\smallgraphnodedistance,\smallgraphnodedistance);
\draw[thick] (P1) -- (P3);
\draw[thick] (P2) -- (P4);
\draw[thick] (P5) -- (P6);
\fill (P1) circle(2pt);
\fill (P2) circle(2pt);
\fill (P3) circle(2pt);
\fill (P4) circle(2pt);
\fill (P5) circle(2pt);
\fill (P6) circle(2pt);
}
}

\def\FiveNodeTreeOne{
\tikz{
\coordinate (P1) at (0,0);
\coordinate (P2) at (\smallgraphnodedistance,0);
\coordinate (P3) at (0.5*\smallgraphnodedistance,\smallgraphnodedistance);
\coordinate (P4) at (1.5*\smallgraphnodedistance,\smallgraphnodedistance);
\coordinate (P5) at (2*\smallgraphnodedistance,0);
\draw[thick] (P2) -- (P1);
\draw[thick] (P2) -- (P3);
\draw[thick] (P2) -- (P4);
\draw[thick] (P2) -- (P5);
\fill (P1) circle(2pt);
\fill (P2) circle(2pt);
\fill (P3) circle(2pt);
\fill (P4) circle(2pt);
\fill (P5) circle(2pt);
}
}

\def\FiveNodeTreeTwo{
\tikz{
\coordinate (P1) at (0,0);
\coordinate (P2) at (\smallgraphnodedistance,0);
\coordinate (P3) at (0.5*\smallgraphnodedistance,\smallgraphnodedistance);
\coordinate (P4) at (1.5*\smallgraphnodedistance,\smallgraphnodedistance);
\coordinate (P5) at (2*\smallgraphnodedistance,0);
\draw[thick] (P3) -- (P1);
\draw[thick] (P2) -- (P3);
\draw[thick] (P2) -- (P4);
\draw[thick] (P2) -- (P5);
\fill (P1) circle(2pt);
\fill (P2) circle(2pt);
\fill (P3) circle(2pt);
\fill (P4) circle(2pt);
\fill (P5) circle(2pt);
}
}

\def\FiveNodePath{
\tikz{
\coordinate (P1) at (0,0);
\coordinate (P2) at (\smallgraphnodedistance,0);
\coordinate (P3) at (0.5*\smallgraphnodedistance,\smallgraphnodedistance);
\coordinate (P4) at (1.5*\smallgraphnodedistance,\smallgraphnodedistance);
\coordinate (P5) at (2*\smallgraphnodedistance,0);
\draw[thick] (P1) -- (P3);
\draw[thick] (P2) -- (P3);
\draw[thick] (P2) -- (P4);
\draw[thick] (P4) -- (P5);
\fill (P1) circle(2pt);
\fill (P2) circle(2pt);
\fill (P3) circle(2pt);
\fill (P4) circle(2pt);
\fill (P5) circle(2pt);
}
}

\def\TriangleCherry{
\tikz{
\coordinate (P1) at (0,0);
\coordinate (P2) at (\smallgraphnodedistance,0);
\coordinate (P3) at (0.5*\smallgraphnodedistance,\smallgraphnodedistance);
\coordinate (P4) at (1.5*\smallgraphnodedistance,\smallgraphnodedistance);
\coordinate (P5) at (2*\smallgraphnodedistance,0);
\draw[thick] (P2) -- (P1);
\draw[thick] (P2) -- (P3);
\draw[thick] (P1) -- (P3);
\draw[thick] (P2) -- (P4);
\draw[thick] (P2) -- (P5);
\fill (P1) circle(2pt);
\fill (P2) circle(2pt);
\fill (P3) circle(2pt);
\fill (P4) circle(2pt);
\fill (P5) circle(2pt);
}
}

\def\FourCycPendant{
\tikz{
\coordinate (P1) at (0,0);
\coordinate (P2) at (\smallgraphnodedistance,0);
\coordinate (P3) at (0.5*\smallgraphnodedistance,\smallgraphnodedistance);
\coordinate (P4) at (1.5*\smallgraphnodedistance,\smallgraphnodedistance);
\coordinate (P5) at (2*\smallgraphnodedistance,0);
\draw[thick] (P1) -- (P2);
\draw[thick] (P2) -- (P4);
\draw[thick] (P4) -- (P3);
\draw[thick] (P3) -- (P1);
\draw[thick] (P2) -- (P5);
\fill (P1) circle(2pt);
\fill (P2) circle(2pt);
\fill (P3) circle(2pt);
\fill (P4) circle(2pt);
\fill (P5) circle(2pt);
}
}

\def\ThreeCycTwoPendant{
\tikz{
\coordinate (P1) at (0,0);
\coordinate (P2) at (\smallgraphnodedistance,0);
\coordinate (P3) at (0.5*\smallgraphnodedistance,\smallgraphnodedistance);
\coordinate (P4) at (1.5*\smallgraphnodedistance,\smallgraphnodedistance);
\coordinate (P5) at (2*\smallgraphnodedistance,0);
\draw[thick] (P1) -- (P2);
\draw[thick] (P2) -- (P3);
\draw[thick] (P3) -- (P1);
\draw[thick] (P2) -- (P5);
\draw[thick] (P3) -- (P4);
\fill (P1) circle(2pt);
\fill (P2) circle(2pt);
\fill (P3) circle(2pt);
\fill (P4) circle(2pt);
\fill (P5) circle(2pt);
}
}

\def\KfourMinusEdgePendantOne{
\tikz{
\coordinate (P1) at (0,0);
\coordinate (P2) at (\smallgraphnodedistance,0);
\coordinate (P3) at (0.5*\smallgraphnodedistance,\smallgraphnodedistance);
\coordinate (P4) at (1.5*\smallgraphnodedistance,\smallgraphnodedistance);
\coordinate (P5) at (2*\smallgraphnodedistance,0);
\draw[thick] (P1) -- (P2);
\draw[thick] (P1) -- (P3);
\draw[thick] (P2) -- (P3);
\draw[thick] (P2) -- (P4);
\draw[thick] (P3) -- (P4);
\draw[thick] (P2) -- (P5);
\fill (P1) circle(2pt);
\fill (P2) circle(2pt);
\fill (P3) circle(2pt);
\fill (P4) circle(2pt);
\fill (P5) circle(2pt);
}
}

\def\KTwoThree{
\tikz{
\coordinate (P1) at (0,0);
\coordinate (P2) at (\smallgraphnodedistance,0);
\coordinate (P3) at (0.5*\smallgraphnodedistance,\smallgraphnodedistance);
\coordinate (P4) at (1.5*\smallgraphnodedistance,\smallgraphnodedistance);
\coordinate (P5) at (2*\smallgraphnodedistance,0);
\draw[thick] (P3) -- (P1);
\draw[thick] (P3) -- (P2);
\draw[thick] (P3) -- (P5);
\draw[thick] (P4) -- (P1);
\draw[thick] (P4) -- (P2);
\draw[thick] (P4) -- (P5);
\fill (P1) circle(2pt);
\fill (P2) circle(2pt);
\fill (P3) circle(2pt);
\fill (P4) circle(2pt);
\fill (P5) circle(2pt);
}
}

\def\KTwoThreePlusEdgeOne{
\tikz{
\coordinate (P1) at (0,0);
\coordinate (P2) at (\smallgraphnodedistance,0);
\coordinate (P3) at (0.5*\smallgraphnodedistance,\smallgraphnodedistance);
\coordinate (P4) at (1.5*\smallgraphnodedistance,\smallgraphnodedistance);
\coordinate (P5) at (2*\smallgraphnodedistance,0);
\draw[thick] (P3) -- (P1);
\draw[thick] (P3) -- (P2);
\draw[thick] (P3) -- (P5);
\draw[thick] (P4) -- (P1);
\draw[thick] (P4) -- (P2);
\draw[thick] (P4) -- (P5);
\draw[thick] (P3) -- (P4);
\fill (P1) circle(2pt);
\fill (P2) circle(2pt);
\fill (P3) circle(2pt);
\fill (P4) circle(2pt);
\fill (P5) circle(2pt);
}
}

\def\TriangleTwoPath{
\tikz{
\coordinate (P1) at (0,0);
\coordinate (P2) at (\smallgraphnodedistance,0);
\coordinate (P3) at (0.5*\smallgraphnodedistance,\smallgraphnodedistance);
\coordinate (P4) at (1.5*\smallgraphnodedistance,\smallgraphnodedistance);
\coordinate (P5) at (2*\smallgraphnodedistance,0);
\draw[thick] (P1) -- (P3);
\draw[thick] (P2) -- (P3);
\draw[thick] (P1) -- (P2);
\draw[thick] (P2) -- (P4);
\draw[thick] (P4) -- (P5);
\fill (P1) circle(2pt);
\fill (P2) circle(2pt);
\fill (P3) circle(2pt);
\fill (P4) circle(2pt);
\fill (P5) circle(2pt);
}
}

\def\Bowtie{
\tikz{
\coordinate (P1) at (0,0);
\coordinate (P2) at (\smallgraphnodedistance,0);
\coordinate (P3) at (0.5*\smallgraphnodedistance,\smallgraphnodedistance);
\coordinate (P4) at (1.5*\smallgraphnodedistance,\smallgraphnodedistance);
\coordinate (P5) at (2*\smallgraphnodedistance,0);
\draw[thick] (P1) -- (P3);
\draw[thick] (P2) -- (P3);
\draw[thick] (P1) -- (P2);
\draw[thick] (P2) -- (P4);
\draw[thick] (P4) -- (P5);
\draw[thick] (P2) -- (P5);
\fill (P1) circle(2pt);
\fill (P2) circle(2pt);
\fill (P3) circle(2pt);
\fill (P4) circle(2pt);
\fill (P5) circle(2pt);
}
}

\def\Cfive{
\tikz{
\coordinate (P1) at (0,0);
\coordinate (P2) at (\smallgraphnodedistance,0);
\coordinate (P3) at (0.5*\smallgraphnodedistance,\smallgraphnodedistance);
\coordinate (P4) at (1.5*\smallgraphnodedistance,\smallgraphnodedistance);
\coordinate (P5) at (2*\smallgraphnodedistance,0);
\draw[thick] (P1) -- (P2);
\draw[thick] (P2) -- (P5);
\draw[thick] (P5) -- (P4);
\draw[thick] (P4) -- (P3);
\draw[thick] (P3) -- (P1);
\fill (P1) circle(2pt);
\fill (P2) circle(2pt);
\fill (P3) circle(2pt);
\fill (P4) circle(2pt);
\fill (P5) circle(2pt);
}
}

\def\House{
\tikz{
\coordinate (P1) at (0,0);
\coordinate (P2) at (\smallgraphnodedistance,0);
\coordinate (P3) at (0.5*\smallgraphnodedistance,\smallgraphnodedistance);
\coordinate (P4) at (1.5*\smallgraphnodedistance,\smallgraphnodedistance);
\coordinate (P5) at (2*\smallgraphnodedistance,0);
\draw[thick] (P1) -- (P3);
\draw[thick] (P3) -- (P4);
\draw[thick] (P1) -- (P2);
\draw[thick] (P2) -- (P4);
\draw[thick] (P4) -- (P5);
\draw[thick] (P2) -- (P5);
\fill (P1) circle(2pt);
\fill (P2) circle(2pt);
\fill (P3) circle(2pt);
\fill (P4) circle(2pt);
\fill (P5) circle(2pt);
}
}

\def\House{
\tikz{
\coordinate (P1) at (0,0);
\coordinate (P2) at (\smallgraphnodedistance,0);
\coordinate (P3) at (0.5*\smallgraphnodedistance,\smallgraphnodedistance);
\coordinate (P4) at (1.5*\smallgraphnodedistance,\smallgraphnodedistance);
\coordinate (P5) at (2*\smallgraphnodedistance,0);
\draw[thick] (P1) -- (P3);
\draw[thick] (P3) -- (P4);
\draw[thick] (P1) -- (P2);
\draw[thick] (P2) -- (P4);
\draw[thick] (P4) -- (P5);
\draw[thick] (P2) -- (P5);
\fill (P1) circle(2pt);
\fill (P2) circle(2pt);
\fill (P3) circle(2pt);
\fill (P4) circle(2pt);
\fill (P5) circle(2pt);
}
}

\def\KTwoThreePlueEdgeTwo{
\tikz{
\coordinate (P1) at (0,0);
\coordinate (P2) at (\smallgraphnodedistance,0);
\coordinate (P3) at (0.5*\smallgraphnodedistance,\smallgraphnodedistance);
\coordinate (P4) at (1.5*\smallgraphnodedistance,\smallgraphnodedistance);
\coordinate (P5) at (2*\smallgraphnodedistance,0);
\draw[thick] (P1) -- (P2) -- (P5);
\draw[thick] (P1) -- (P3) -- (P5);
\draw[thick] (P1) -- (P4) -- (P5);
\draw[thick] (P2) -- (P4);
\fill (P1) circle(2pt);
\fill (P2) circle(2pt);
\fill (P3) circle(2pt);
\fill (P4) circle(2pt);
\fill (P5) circle(2pt);
}
}

\def\KfourMinusEdgePendantTwo{
\tikz{
\coordinate (P1) at (0,0);
\coordinate (P2) at (\smallgraphnodedistance,0);
\coordinate (P3) at (0.5*\smallgraphnodedistance,\smallgraphnodedistance);
\coordinate (P4) at (1.5*\smallgraphnodedistance,\smallgraphnodedistance);
\coordinate (P5) at (2*\smallgraphnodedistance,0);
\draw[thick] (P1) -- (P2);
\draw[thick] (P1) -- (P3);
\draw[thick] (P1) -- (P4);
\draw[thick] (P2) -- (P4);
\draw[thick] (P3) -- (P4);
\draw[thick] (P2) -- (P5);
\fill (P1) circle(2pt);
\fill (P2) circle(2pt);
\fill (P3) circle(2pt);
\fill (P4) circle(2pt);
\fill (P5) circle(2pt);
}
}

\def\KfourPendant{
\tikz{
\coordinate (P1) at (0,0);
\coordinate (P2) at (\smallgraphnodedistance,0);
\coordinate (P3) at (0.5*\smallgraphnodedistance,\smallgraphnodedistance);
\coordinate (P4) at (1.5*\smallgraphnodedistance,\smallgraphnodedistance);
\coordinate (P5) at (2*\smallgraphnodedistance,0);
\draw[thick] (P1) -- (P2);
\draw[thick] (P1) -- (P3);
\draw[thick] (P1) -- (P4);
\draw[thick] (P2) -- (P4);
\draw[thick] (P3) -- (P4);
\draw[thick] (P2) -- (P5);
\draw[thick] (P2) -- (P3);
\fill (P1) circle(2pt);
\fill (P2) circle(2pt);
\fill (P3) circle(2pt);
\fill (P4) circle(2pt);
\fill (P5) circle(2pt);
}
}

\def\KfourTwoPath{
\tikz{
\coordinate (P1) at (0,0);
\coordinate (P2) at (\smallgraphnodedistance,0);
\coordinate (P3) at (0.5*\smallgraphnodedistance,\smallgraphnodedistance);
\coordinate (P4) at (1.5*\smallgraphnodedistance,\smallgraphnodedistance);
\coordinate (P5) at (2*\smallgraphnodedistance,0);
\draw[thick] (P1) -- (P2);
\draw[thick] (P1) -- (P3);
\draw[thick] (P1) -- (P4);
\draw[thick] (P2) -- (P4);
\draw[thick] (P3) -- (P4);
\draw[thick] (P2) -- (P5);
\draw[thick] (P2) -- (P3);
\draw[thick] (P5) -- (P4);
\fill (P1) circle(2pt);
\fill (P2) circle(2pt);
\fill (P3) circle(2pt);
\fill (P4) circle(2pt);
\fill (P5) circle(2pt);
}
}

\def\KTwoThreePlusEdgeTwo{
\tikz{
\coordinate (P1) at (0,0);
\coordinate (P2) at (\smallgraphnodedistance,0);
\coordinate (P3) at (0.5*\smallgraphnodedistance,\smallgraphnodedistance);
\coordinate (P4) at (1.5*\smallgraphnodedistance,\smallgraphnodedistance);
\coordinate (P5) at (2*\smallgraphnodedistance,0);
\draw[thick] (P3) -- (P1);
\draw[thick] (P3) -- (P2);
\draw[thick] (P3) -- (P5);
\draw[thick] (P4) -- (P1);
\draw[thick] (P4) -- (P2);
\draw[thick] (P4) -- (P5);
\draw[thick] (P1) -- (P2);
\fill (P1) circle(2pt);
\fill (P2) circle(2pt);
\fill (P3) circle(2pt);
\fill (P4) circle(2pt);
\fill (P5) circle(2pt);
}
}

\def\KTwoThreePlusTwoEdges{
\tikz{
\coordinate (P1) at (0,0);
\coordinate (P2) at (\smallgraphnodedistance,0);
\coordinate (P3) at (0.5*\smallgraphnodedistance,\smallgraphnodedistance);
\coordinate (P4) at (1.5*\smallgraphnodedistance,\smallgraphnodedistance);
\coordinate (P5) at (2*\smallgraphnodedistance,0);
\draw[thick] (P3) -- (P1);
\draw[thick] (P3) -- (P2);
\draw[thick] (P3) -- (P5);
\draw[thick] (P4) -- (P1);
\draw[thick] (P4) -- (P2);
\draw[thick] (P4) -- (P5);
\draw[thick] (P1) -- (P2);
\draw[thick] (P2) -- (P5);
\fill (P1) circle(2pt);
\fill (P2) circle(2pt);
\fill (P3) circle(2pt);
\fill (P4) circle(2pt);
\fill (P5) circle(2pt);
}
}

\def\KFiveMinusEdge{
\tikz{
\coordinate (P1) at (0,0);
\coordinate (P2) at (\smallgraphnodedistance,0);
\coordinate (P3) at (0.5*\smallgraphnodedistance,\smallgraphnodedistance);
\coordinate (P4) at (1.5*\smallgraphnodedistance,\smallgraphnodedistance);
\coordinate (P5) at (2*\smallgraphnodedistance,0);
\draw[thick] (P3) -- (P1);
\draw[thick] (P3) -- (P2);
\draw[thick] (P3) -- (P5);
\draw[thick] (P4) -- (P1);
\draw[thick] (P4) -- (P2);
\draw[thick] (P4) -- (P5);
\draw[thick] (P1) -- (P2);
\draw[thick] (P2) -- (P5);
\draw[thick] (P3) -- (P4);
\fill (P1) circle(2pt);
\fill (P2) circle(2pt);
\fill (P3) circle(2pt);
\fill (P4) circle(2pt);
\fill (P5) circle(2pt);
}
}

\def\KTwoUnionThreeStar{
\tikz{
\coordinate (P1) at (0,0);
\coordinate (P2) at (\smallgraphnodedistance,0);
\coordinate (P3) at (0.5*\smallgraphnodedistance,\smallgraphnodedistance);
\coordinate (P4) at (1.5*\smallgraphnodedistance,\smallgraphnodedistance);
\coordinate (P5) at (2*\smallgraphnodedistance,0);
\coordinate (P6) at (2.5*\smallgraphnodedistance,\smallgraphnodedistance);
\draw[thick] (P3) -- (P2);
\draw[thick] (P3) -- (P1);
\draw[thick] (P3) -- (P4);
\draw[thick] (P5) -- (P6);
\fill (P1) circle(2pt);
\fill (P2) circle(2pt);
\fill (P3) circle(2pt);
\fill (P4) circle(2pt);
\fill (P5) circle(2pt);
\fill (P6) circle(2pt);
}
}

\def\TwoTwoPaths{
\tikz{
\coordinate (P1) at (0,0);
\coordinate (P2) at (\smallgraphnodedistance,0);
\coordinate (P3) at (0.5*\smallgraphnodedistance,\smallgraphnodedistance);
\coordinate (P4) at (1.5*\smallgraphnodedistance,\smallgraphnodedistance);
\coordinate (P5) at (2*\smallgraphnodedistance,0);
\coordinate (P6) at (2.5*\smallgraphnodedistance,\smallgraphnodedistance);
\draw[thick] (P1) -- (P3) -- (P2);
\draw[thick] (P4) -- (P5) -- (P6);
\fill (P1) circle(2pt);
\fill (P2) circle(2pt);
\fill (P3) circle(2pt);
\fill (P4) circle(2pt);
\fill (P5) circle(2pt);
\fill (P6) circle(2pt);
}
}

\def\KTwoUnionThreePath{
\tikz{
\coordinate (P1) at (0,0);
\coordinate (P2) at (\smallgraphnodedistance,0);
\coordinate (P3) at (0.5*\smallgraphnodedistance,\smallgraphnodedistance);
\coordinate (P4) at (1.5*\smallgraphnodedistance,\smallgraphnodedistance);
\coordinate (P5) at (2*\smallgraphnodedistance,0);
\coordinate (P6) at (2.5*\smallgraphnodedistance,\smallgraphnodedistance);
\draw[thick] (P1) -- (P3) -- (P2) -- (P4);
\draw[thick] (P5) -- (P6);
\fill (P1) circle(2pt);
\fill (P2) circle(2pt);
\fill (P3) circle(2pt);
\fill (P4) circle(2pt);
\fill (P5) circle(2pt);
\fill (P6) circle(2pt);
}
}

\def\KTwoUnionTrianglePendant{
\tikz{
\coordinate (P1) at (0,0);
\coordinate (P2) at (\smallgraphnodedistance,0);
\coordinate (P3) at (0.5*\smallgraphnodedistance,\smallgraphnodedistance);
\coordinate (P4) at (1.5*\smallgraphnodedistance,\smallgraphnodedistance);
\coordinate (P5) at (2*\smallgraphnodedistance,0);
\coordinate (P6) at (2.5*\smallgraphnodedistance,\smallgraphnodedistance);
\draw[thick] (P1) -- (P3) -- (P2) -- (P4);
\draw[thick] (P1) -- (P2);
\draw[thick] (P5) -- (P6);
\fill (P1) circle(2pt);
\fill (P2) circle(2pt);
\fill (P3) circle(2pt);
\fill (P4) circle(2pt);
\fill (P5) circle(2pt);
\fill (P6) circle(2pt);
}
}

\def\KTwoUnionCfour{
\tikz{
\coordinate (P1) at (0,0);
\coordinate (P2) at (\smallgraphnodedistance,0);
\coordinate (P3) at (0.5*\smallgraphnodedistance,\smallgraphnodedistance);
\coordinate (P4) at (1.5*\smallgraphnodedistance,\smallgraphnodedistance);
\coordinate (P5) at (2*\smallgraphnodedistance,0);
\coordinate (P6) at (2.5*\smallgraphnodedistance,\smallgraphnodedistance);
\draw[thick] (P1) -- (P2) -- (P4) -- (P3) -- (P1);
\draw[thick] (P5) -- (P6);
\fill (P1) circle(2pt);
\fill (P2) circle(2pt);
\fill (P3) circle(2pt);
\fill (P4) circle(2pt);
\fill (P5) circle(2pt);
\fill (P6) circle(2pt);
}
}

\def\TriangleUnionTwoPath{
\tikz{
\coordinate (P1) at (0,0);
\coordinate (P2) at (\smallgraphnodedistance,0);
\coordinate (P3) at (0.5*\smallgraphnodedistance,\smallgraphnodedistance);
\coordinate (P4) at (1.5*\smallgraphnodedistance,\smallgraphnodedistance);
\coordinate (P5) at (2*\smallgraphnodedistance,0);
\coordinate (P6) at (2.5*\smallgraphnodedistance,\smallgraphnodedistance);
\draw[thick] (P1) -- (P2) -- (P3) -- (P1);
\draw[thick] (P4) -- (P5) -- (P6);
\fill (P1) circle(2pt);
\fill (P2) circle(2pt);
\fill (P3) circle(2pt);
\fill (P4) circle(2pt);
\fill (P5) circle(2pt);
\fill (P6) circle(2pt);
}
}

\def\KfourMinusEdgeUnionKtwo{
\tikz{
\coordinate (P1) at (0,0);
\coordinate (P2) at (\smallgraphnodedistance,0);
\coordinate (P3) at (0.5*\smallgraphnodedistance,\smallgraphnodedistance);
\coordinate (P4) at (1.5*\smallgraphnodedistance,\smallgraphnodedistance);
\coordinate (P5) at (2*\smallgraphnodedistance,0);
\coordinate (P6) at (2.5*\smallgraphnodedistance,\smallgraphnodedistance);
\draw[thick] (P1) -- (P2) -- (P4) -- (P3) -- (P1) -- (P4);
\draw[thick] (P5) -- (P6);
\fill (P1) circle(2pt);
\fill (P2) circle(2pt);
\fill (P3) circle(2pt);
\fill (P4) circle(2pt);
\fill (P5) circle(2pt);
\fill (P6) circle(2pt);
}
}

\def\TwoTriangles{
\tikz{
\coordinate (P1) at (0,0);
\coordinate (P2) at (\smallgraphnodedistance,0);
\coordinate (P3) at (0.5*\smallgraphnodedistance,\smallgraphnodedistance);
\coordinate (P4) at (1.5*\smallgraphnodedistance,\smallgraphnodedistance);
\coordinate (P5) at (2*\smallgraphnodedistance,0);
\coordinate (P6) at (2.5*\smallgraphnodedistance,\smallgraphnodedistance);
\draw[thick] (P1) -- (P2) -- (P3) -- (P1);
\draw[thick] (P4) -- (P5) -- (P6) -- (P4);
\fill (P1) circle(2pt);
\fill (P2) circle(2pt);
\fill (P3) circle(2pt);
\fill (P4) circle(2pt);
\fill (P5) circle(2pt);
\fill (P6) circle(2pt);
}
}

\def\KfourUnionKtwo{
\tikz{
\coordinate (P1) at (0,0);
\coordinate (P2) at (\smallgraphnodedistance,0);
\coordinate (P3) at (0.5*\smallgraphnodedistance,\smallgraphnodedistance);
\coordinate (P4) at (1.5*\smallgraphnodedistance,\smallgraphnodedistance);
\coordinate (P5) at (2*\smallgraphnodedistance,0);
\coordinate (P6) at (2.5*\smallgraphnodedistance,\smallgraphnodedistance);
\draw[thick] (P1) -- (P2) -- (P4) -- (P3) -- (P1);
\draw[thick] (P2) -- (P3);
\draw[thick] (P1) -- (P4);
\draw[thick] (P5) -- (P6);
\fill (P1) circle(2pt);
\fill (P2) circle(2pt);
\fill (P3) circle(2pt);
\fill (P4) circle(2pt);
\fill (P5) circle(2pt);
\fill (P6) circle(2pt);
}
}

\def\TwoPathUnionTwoKtwo{
\tikz{
\coordinate (P1) at (0,0);
\coordinate (P2) at (\smallgraphnodedistance,0);
\coordinate (P3) at (0.5*\smallgraphnodedistance,\smallgraphnodedistance);
\coordinate (P4) at (1.5*\smallgraphnodedistance,\smallgraphnodedistance);
\coordinate (P5) at (2*\smallgraphnodedistance,0);
\coordinate (P6) at (2.5*\smallgraphnodedistance,\smallgraphnodedistance);
\coordinate (P7) at (3*\smallgraphnodedistance,0);
\draw[thick] (P1) -- (P3) -- (P2);
\draw[thick] (P4) -- (P5);
\draw[thick] (P6) -- (P7);
\fill (P1) circle(2pt);
\fill (P2) circle(2pt);
\fill (P3) circle(2pt);
\fill (P4) circle(2pt);
\fill (P5) circle(2pt);
\fill (P6) circle(2pt);
\fill (P7) circle(2pt);
}
}

\def\KthreeUnionTwoKtwo{
\tikz{
\coordinate (P1) at (0,0);
\coordinate (P2) at (\smallgraphnodedistance,0);
\coordinate (P3) at (0.5*\smallgraphnodedistance,\smallgraphnodedistance);
\coordinate (P4) at (1.5*\smallgraphnodedistance,\smallgraphnodedistance);
\coordinate (P5) at (2*\smallgraphnodedistance,0);
\coordinate (P6) at (2.5*\smallgraphnodedistance,\smallgraphnodedistance);
\coordinate (P7) at (3*\smallgraphnodedistance,0);
\draw[thick] (P1) -- (P3) -- (P2) -- (P1);
\draw[thick] (P4) -- (P5);
\draw[thick] (P6) -- (P7);
\fill (P1) circle(2pt);
\fill (P2) circle(2pt);
\fill (P3) circle(2pt);
\fill (P4) circle(2pt);
\fill (P5) circle(2pt);
\fill (P6) circle(2pt);
\fill (P7) circle(2pt);
}
}

\def\FourKtwo{
\tikz{
\coordinate (P1) at (0,0);
\coordinate (P2) at (\smallgraphnodedistance,0);
\coordinate (P3) at (0.5*\smallgraphnodedistance,\smallgraphnodedistance);
\coordinate (P4) at (1.5*\smallgraphnodedistance,\smallgraphnodedistance);
\coordinate (P5) at (2*\smallgraphnodedistance,0);
\coordinate (P6) at (2.5*\smallgraphnodedistance,\smallgraphnodedistance);
\coordinate (P7) at (3*\smallgraphnodedistance,0);
\coordinate (P8) at (3.5*\smallgraphnodedistance,\smallgraphnodedistance);
\draw[thick] (P1) -- (P3);
\draw[thick] (P2) -- (P4);
\draw[thick] (P5) -- (P6);
\draw[thick] (P7) -- (P8);
\fill (P1) circle(2pt);
\fill (P2) circle(2pt);
\fill (P3) circle(2pt);
\fill (P4) circle(2pt);
\fill (P5) circle(2pt);
\fill (P6) circle(2pt);
\fill (P7) circle(2pt);
\fill (P8) circle(2pt);
}
}

\begin{table}[htp]
\centering\small
$
\begin{array}{ccccccc}
\hline
G & v & e & c(G) & |\mathrm{Aut}(G)| & P(G)=P(G;r,s,n) \\
\hline
\Ktwo & 2 & 1 & 1 & 2 & \overline{100}-2 \\
\hline
\Ptwo & 3 & 2 & 1 & 2 & P(\Ktwo)^2 \\
\Kthree & 3 & 3 & 1 & 6 & \overline{200}-2 \\
\TwoKtwo & 4 & 2 & 2 & 8 & \overline{111}\,P(\Ktwo)^2 \\
\hline
\ThreeStar & 4 & 3 & 1 & 6 & P(\Ktwo)^3 \\
\Pthree & 4 & 3 & 1 & 2 & P(\Ktwo)^3 \\
\KthreeLeaf & 4 & 4 & 1 & 2 & P(\Kthree)P(\Ktwo) \\ 
\Cfour & 4 & 4 & 1 & 8 & \overline{300}+6\,\overline{110}-12\,\overline{100}+16 \\
\KfourMinusEdge & 4 & 5 & 1 & 4 & \overline{300}+2\,\overline{110}-4\,\overline{100}+4 \\
\Kfour & 4 & 6 & 1 & 24 & \overline{300}-2 \\
\PtwoKtwo & 5 & 3 & 2 & 4 & \overline{111}\,P(\Ktwo)^3 \\
\KthreeKtwo & 5 & 4 & 2 & 12 & \overline{111}\,P(\Kthree)P(\Ktwo) \\
\ThreeKtwo & 6 & 3 & 3 & 48 & \overline{222}\,P(\Ktwo)^3 \\
\hline
\end{array}
$
\caption{The polynomial $P(G)$ for $v$-vertex graphs $G$ with $v-c(G) \leq 3$.}\label{ta:polyP1}
\end{table}

\begin{table}[htp]
\centering\small
$
\begin{array}{ccccccc}
\hline
G & v & e & c(G) & |\mathrm{Aut}(G)| & P(G)=P(G;r,s,n) \\
\hline
\FiveNodeTreeOne & 5 & 4 & 1 & 24 & P(\Ktwo)^4 \\
\FiveNodeTreeTwo & 5 & 4 & 1 & 2 & P(\Ktwo)^4 \\
\TriangleCherry & 5 & 5 & 1 & 4 & P(\Kthree)P(\Ktwo)^2 \\
\FourCycPendant & 5 & 5 & 1 & 2 & P(\Cfour)P(\Ktwo) \\
\ThreeCycTwoPendant & 5 & 5 & 1 & 2 & P(\Kthree)P(\Ktwo)^2 \\
\KfourMinusEdgePendantOne & 5 & 6 & 1 & 2 & P(\KfourMinusEdge)P(\Ktwo) \\
\KTwoThree & 5 & 6 & 1 & 12 & P(\KfourMinusEdge)P(\Ktwo) \\
\KTwoThreePlusEdgeOne & 5 & 7 & 1 & 12 & \overline{400}+3\,\overline{210}+6\,\overline{111}+3\,\overline{211}-6\,\overline{200}-12\,\overline{110}+12\,\overline{100}-8 \\
\FiveNodePath & 5 & 4 & 1 & 2 & P(\Ktwo)^4 \\
\TriangleTwoPath & 5 & 5 & 1 & 2 & P(\Kthree) P(\Ktwo)^2 \\
\Bowtie & 5 & 6 & 1 & 8 & P(\Kthree)^2 \\
\Cfive & 5 & 5 & 1 & 10 & \overline{400}+10\,\overline{210}-20\,\overline{200}-30\,\overline{110}+40\,\overline{100}-32 \\
\House & 5 & 6 & 1 & 2 & \overline{400}+4\,\overline{210}+4\,\overline{211}-8\,\overline{200}-6\,\overline{110}+4\,\overline{110}+4 \\
\KTwoThreePlueEdgeTwo & 5 & 7 & 1 & 2 & \overline{400}+2\,\overline{210}-4\,\overline{200}-2\,\overline{110}+4 \\
\KfourMinusEdgePendantTwo & 5 & 6 & 1 & 2 & P(\KfourMinusEdge)P(\Ktwo) \\
\KfourPendant & 5 & 7 & 1 & 6 & P(\Kfour)P(\Ktwo) \\
\KfourTwoPath & 5 & 8 & 1 & 4 & \overline{400}+\overline{210}-2\,\overline{200}-2\,\overline{100}+4 \\
\KTwoThreePlusEdgeTwo & 5 & 7 & 1 & 4 & \overline{400}+\overline{210}-2\,\overline{200}+4\,\overline{110}-10\,\overline{100}+16 \\
\KTwoThreePlusTwoEdges & 5 & 8 & 1 & 8 & \overline{400}+4\,\overline{110}-8\,\overline{100}+10 \\
\KFiveMinusEdge & 5 & 9 & 1 & 12 & \overline{400}+2\overline{110}-4\,\overline{100}+4 \\
K_5 & 5 & 10 & 1 & 120 & \overline{400}-2 \\
\KTwoUnionThreeStar & 6 & 4 & 2 & 12 & \overline{111}\, P(\Ktwo)^4 \\
\TwoTwoPaths & 6 & 4 & 2 & 8 & \overline{111}\, P(\Ktwo)^4 \\
\KTwoUnionThreePath & 6 & 4 & 2 & 4 & \overline{111}\, P(\Ktwo)^4 \\
\KTwoUnionTrianglePendant & 6 & 5 & 2 & 4 & \overline{111}\, P(\Kthree) P(\Ktwo)^2 \\
\KTwoUnionCfour & 6 & 5 & 2 & 16 & \overline{111}\, P(\Cfour) P(\Ktwo)^2 \\
\TriangleUnionTwoPath & 6 & 5 & 2 & 12 & \overline{111}\, P(\Kthree) P(\Ktwo)^2 \\
\KfourMinusEdgeUnionKtwo & 6 & 6 & 2 & 8 & \overline{111}\, P(\KfourMinusEdge) P(\Ktwo) \\
\TwoTriangles & 6 & 6 & 2 & 72 & \overline{111}\, P(\Kthree)^2 \\
\KfourUnionKtwo & 6 & 7 & 2 & 48 & \overline{111}\, P(\Kfour) P(\Ktwo) \\
\TwoPathUnionTwoKtwo & 7 & 4 & 3 & 16 & \overline{222}\, P(\Ptwo) P(\Ktwo)^2 \\
\KthreeUnionTwoKtwo & 7 & 5 & 3 & 48 & \overline{222}\, P(\Kthree) P(\Ktwo)^2 \\
\FourKtwo & 8 & 3 & 4 & 384 & \overline{333}\, P(\Ktwo)^4 \\
\hline
\end{array}
$
\caption{The polynomial $P(G)$ for $v$-vertex graphs $G$ with $v-c(G)=4$.}\label{ta:polyP2}
\end{table}

\begin{table}[h]
\centering
\resizebox{\textwidth}{!}{
\begin{tabular}{rrrrrrrrrrrrrrrr} \hline
\ & \multicolumn{5}{l}{$\#\PLR(r,s,7;m)$}\\ \cline{2-16}
\ & \multicolumn{5}{l}{$r.s.7$}\\ \cline{2-16}
$m$ & 1.1.7 & 1.2.7 & 1.3.7 & 1.4.7 & 1.5.7  & 1.6.7 & 2.2.7 & 2.3.7 & 2.4.7 & 2.5.7 & 2.6.7 & 3.3.7 & 3.4.7 & 3.5.7 & 3.6.7\\ \hline
0 & 1 & 1 & 1 & 1 & 1 & 1 & 1 & 1 & 1 & 1 & 1 & 1 & 1 & 1 & 1\\
1 & 1 & 14 & 21 & 28 & 35 & 42 & 28 & 42 & 56 & 70 & 84 & 63 & 84 & 105 & 126\\
2 & & 42 & 126 & 252 & 420 & 630 & 266 & 672 & 1260 & 2030 & 2982 & 1638 & 3024 & 4830 & 7056 \\
3 & &  & 210 & 840 & 2100 & 4200 & 1008 & 5208 & 14784 & 31920 & 58800 & 22974 & 61488 & 128730 & 232680\\
4 & &  &  & 840 & 4200 & 12600 & 1302 & 20538 & 98364 & 299460 & 712530 & 190890 & 783972 & 2216340 & 5048190\\
5 & &  &  &  & 2520 & 15120 &  & 39060 & 378000 & 1739640 & 5549040 & 971838 & 6583248 & 26030340 & 76284180\\
6 & &  &  &  &  & 5040 &  & 28140 & 815640 & 6291600 & 28239960 & 3026772 & 37230984 & 214773720 & 829360980\\
7 & &  &  &  &  &  &  &  & 900480 & 13876800 & 93703680 & 5560380 & 142536240 & 1263691800 & 6610206960\\
8 & &  &  &  &  &  &  & & 390600 & 17711400 & 198840600 & 5477220 & 365911560 & 5328892800 & 39009259800\\
9 & &  &  &  &  &  &  & &  & 11718000 & 259408800 & 2212980 & 613495680 & 16053853200 & 171041026800\\
10 & &  &  &  &  &  &  & &  & 3059280 &  194125680 &  & 637509600 & 34161276240 & 556100475840\\
11 & &  &  &  &  &  &  & &  &  & 73422720 &  & 369109440 & 50271606000 & 1331142603840\\
12 & &  &  &  &  &  &  & &  &  & 10679760 &  & 90296640 & 49395578400 & 2316314150640\\
13 & &  &  &  &  &  &  & &  &  &  &  &  & 30542853600 & 2873671668000\\
14 & &  &  &  &  &  &  & &  &  &  &  &  & 10629360000 & 2470492936800\\
15 & &  &  &  &  &  &  & &  &  &  &  &  & 1573165440 & 1411231731840\\
16 & &  &  &  &  &  &  & &  &  &  &  &  & & 501894973440\\
17 & &  &  &  &  &  &  & &  &  &  &  &  & & 99105431040\\
18 & &  &  &  &  &  &  & &  &  &  &  &  & & 8211571200\\
\hline
Total & 8 & 57 & 358 & 1961 & 9276 & 37633 & 2605 & 93661 & 2599185 & 54730201 & 864744637 & 17464756  & 2263521961 & 199463431546 & 11785736969413\\ \hline
\end{tabular}
}
\caption{The values of $\#\PLR(r,s,7;m)$; continued in Tables~\ref{ta:rs7b} and~\ref{ta:r77}.}\label{ta:rs7a}
\end{table}

\begin{table}[h]
{\scriptsize
\resizebox{\textwidth}{!}{
\begin{tabular}{rrrrrrrr} \hline
\ & \multicolumn{5}{l}{$\#\PLR(r,s,7;m)$}\\ \cline{2-7}
\ & \multicolumn{5}{l}{$r.s.7$}\\ \cline{2-7}
$m$ & 4.4.7 & 4.5.7 & 4.6.7 & 5.5.7 & 5.6.7 & 6.6.7\\ \hline
0 & 1 & 1 & 1 & 1 & 1 & 1\\
1 & 112 & 140 & 168 & 175 & 210 &  252\\
2 & 5544 & 8820 & 12852 & 14000 & 20370 & 29610\\
3 & 160608 & 331800 & 594720 & 680400 & 1213800 & 2158800 \\
4 & 3040464 & 8342040 & 18654300 & 22520400 & 49851900 & 109648350\\
5 & 39789792 & 148690080 & 421288560 & 539486640 & 1501095960 & 4129786080\\
6 & 371511504 & 1945492080 & 7103917800 & 9705007200 & 34417437600 & 119886474960\\
7 & 2518935552 & 19094265120 & 91553898240 & 134286297600 & 616139899200 & 2752801934400\\
8 & 12508115256 & 142468484760 & 915820562160 & 1452407800200 & 8762762710800 & 50916808769400\\
9 & 45551970240 & 814365132000 & 7182549494400 & 12413692800600 & 100228554703200 & 768744893767200\\
10 & 121055555040 & 3578117047680 & 44440902031680 & 84446936458080 & 930070756954080 & 9567352024458480\\
11 & 231977692800 & 12080294553600 & 217628184896640 & 459215324652000 & 7044643080720000 & 98867148338165760\\
12 & 313967041920 & 31220730777600 & 844055906319360 & 2000199369924000 & 43729884582552000 & 852898268432422800\\
13 & 290077079040 & 61311770150400 & 2588575554835200 & 6978471536484000 & 223002930233664000 & 6166083869012592000\\
14 & 172656368640 & 90439590528000 & 6253755470524800 & 19466170012296000 & 935144001957312000 & 37457508269996136000\\
15 & 59253304320 & 98519956738560 & 11829864008309760 & 43255879780478400 & 3223730533876070400 & 191508486724243180800\\
16 & 8859553920 & 77323490294400 & 17371923533959680 & 76143045893544000 & 9122624741349504000 & 824650379018257377600\\
17 &  & 42126214233600 & 19575543305041920 & 105358455643896000 & 21136648322563200000 & 2990529904515892704000\\
18 &  & 14995766822400 & 16668621405273600 & 113411691586368000 & 39946627806672384000 & 9125647379336687472000\\
19 &  & 3114811929600 & 10507596032102400 & 93673159102656000 & 61276248720916992000 & 23396431711383803520000\\
20 &  & 284634362880 & 4768455577697280 & 58277161295539200 & 75809634470217446400 & 50284894217671092470400\\
21 &  &  & 1496705022167040 & 26615217299328000 & 75050036025947136000 & 90334790420061996748800\\
22 &  &  & 305223851842560 & 8591796855936000 & 58874808204632448000 & 135149758456395303936000\\
23 &  &  & 36075091046400 & 1844984711808000 & 36156622400801280000 & 167647764880657152000000\\
24 &  &  & 1862525145600 & 235436435136000 &  17119090026206784000 & 171521836534811629440000\\
25 &  &  &  & 13481774369280 & 6126452955671086080 & 143844493310595330785280\\
26 &  &  &  &  & 1613475264781900800 & 98168168535490134466560\\
27 &  &  &  &  & 300888959183769600 & 54057999485833839820800\\
28 &  &  &  &  & 37371505393152000 & 23779699801418663424000\\
29 &  &  &  &  & 2759601374208000 & 8256726182294360064000\\
30 &  &  &  &  & 91288879718400 & 2230046357199562137600\\
31 &  &  &  &  &  & 459939642510304051200\\
32 &  &  &  &  &  & 70680227381503488000\\
33 &  &  &  &  &  & 7813153251735552000\\
34 &  &  &  &  &  & 587441307350016000\\
35 &  &  &  &  &  & 27048481121894400\\
36 &  &  &  &  &  & 583662346444800\\ \hline
Total & 1258840124753 & 435973408185561 & 92518523839617121 & 556422824213480176 & 407007072002505214801 & 982388579887448747338333\\ \hline
\end{tabular}
}}
\caption{The values of $\#\PLR(r,s,7;m)$; continuing from Table~\ref{ta:rs7a} and continued in Table~\ref{ta:r77}.}\label{ta:rs7b}
\end{table}

\begin{table}
\centering
\resizebox{\textwidth}{!}{
\begin{tabular}{rrrrrrrr} \hline
\ & \multicolumn{5}{l}{$\#\PLR(r,7,7;m)$}\\ \cline{2-8}
\ & \multicolumn{5}{l}{$r.7.7$}\\ \cline{2-8}
$m$ & 1.7.7 & 2.7.7 & 3.7.7 & 4.7.7 & 5.7.7 & 6.7.7 & 7.7.7\\ \hline
\hline
0 & 1 & 1 & 1 & 1 & 1 & 1 & 1\\
1 & 49 & 98 & 147 & 196 & 245 & 294 & 343 \\
2 & 882 & 4116 & 9702 & 17640 & 27930 & 40572  & 55566 \\
3 & 7350 & 97608 & 381318 & 969024 & 1971270 & 3498600 & 5661558 \\
4 & 29400 & 1450302 & 9983358 & 36434244 & 96693660 & 211737330 & 407626002 \\
5 & 52920 & 14173740 & 184571730 & 996695280 & 3508057980 & 9577064700 & 22091837670 \\
6 & 35280 & 93118620 & 2493017100 & 20589037560 & 97824178200 & 336641627700 & 937499611860 \\
7 & 5040 & 413327040 & 25114127220 & 329058167760 & 2151220104600 & 9441643402800 & 31995541817820 \\
8 & & 1229208120 & 191003176980 & 4136301605520 & 37983532771800 & 215279839870200 & 895147467758460 \\
9 & & 2396605680 & 1103575119780 & 41356003473120 & 545522619369000 & 4045906316281200 & 20823534145010940 \\
10 & & 2949266880 & 4851540242640 & 331382137961280 & 6433667771868960 & 63326098060263360 & 407161408673448240 \\
11 & & 2154479040 & 16187551364880 & 2138171372830080 & 62740410283404000 & 832012118607983040 & 6747928605026748720 \\
12 & & 845696880 & 40729136096880 & 11136703296096000 & 508448168895240000 & 9231573671794519920 & 95414556472688784240 \\
13 & & 149516640 & 76460194354320 & 46850306414526720 & 3435977823932808000 & 86898548210325012000 & 1157011091919371520720 \\
14 & & 9344160 & 105451973716320 & 158998861707477120 & 19404150304485744000 & 696350429909011332000 & 12080739048610887859680 \\
15 & &  & 104574912049440 & 434071037204501760 & 91672146486194601600 & 4762047565503866736000 & 108953982522887641120800 \\
16 & &  & 72399498706080 & 948939730997852160 & 362335335766117560000 & 27837450938084937912000 & 850783267970671119386400 \\
17 & &  & 33593118763680 & 1650710438532288000 & 1197163519923384216000 & 139234645196548772976000 & 5762016601975755442288800 \\
18 & &  & 9863841496320 & 2265820889356362240 & 3300872875576140816000 & 596053661819256139968000 & 33886035458720657756006400 \\
19 & &  & 1690904920320 & 2427993422686218240 & 7575781755486572208000 & 2183386963441728494016000 & 173162378870925255394329600 \\
20 & &  & 151342732800 & 2003724888642247680 & 14423117979384567129600 & 6838094015249132148105600 & 769084553432477123576582400 \\
21 & &  & 5411750400  & 1251798397787105280 & 22679370383067115200000 & 18286225276118547108614400 & 2968275252277138334102611200 \\
22 & &  & & 579227971312972800 & 29296238052876891264000 & 41677467136757620221715200  & 9949401025065781152960038400 \\
23 & &  & & 193034752263198720 & 30887319756977889408000 & 80769545715156012215424000 & 28936380037160064863828851200 \\
24 & &  & & 44693646858846720 & 26372495140248144384000 & 132713702198233518483072000 & 72925724928137854106413900800 \\
25 & &  & & 6860934701107200 & 18066619214217787207680 & 184255530135693426746542080 & 158995953693483311073403284480 \\
26 & &  & & 655091210188800 & 9820001352274125465600 & 215290252940027718278891520 & 299284107563352671166958064640 \\
27 & &  & & 34832706048000 & 4178611218476036966400 & 210725753874837580720373760 & 485219498022119678183816647680 \\
28 & &  & & 782137036800 & 1369620935962581657600 & 171867215661546141628416000 & 675695410918704050010696483840 \\
29 & &  & &  & 339042043904814028800 & 116095014971808867619430400 & 805661431870384140528231336960 \\
30 & &  & &  & 61869133685050675200 & 64501823805957475751116800 & 819586476449488450769559091200 \\
31 & &  & &  & 8073669012853248000 & 29244277165012229350195200 & 708500236904008865987686041600 \\
32 & &  & &  & 723625308177408000 & 10722854433456169179033600 & 518142986889857917315006003200 \\
33 & &  & &  & 41905003262976000 & 3147095680488347470848000 & 318986018230847365565350041600 \\
34 & &  & &  & 1401648095232000 & 730644033010729291776000 & 164410170607527803740951142400 \\
35 & &  & &  & 20449013760000 & 132359825819298115584000 & 70519394490467460516228096000 \\
36 & &  & &  &  & 18410847956245011456000 & 25006983924959260345820160000 \\
37 & &  & &  &  & 1928283614905632768000 & 7279610083869038651882496000 \\
38 & &  & &  &  & 148277402084431872000 & 1726520284027400861325312000 \\
39 & &  & &  &  & 8074157388079104000 & 331008922550326911141888000 \\
40 & &  & &  &  & 293524209893376000 & 50895029497370545118822400 \\
41 & &  & &  &  & 6359357620224000 & 6228531360821639220019200 \\
42 & &  & &  &  & 61479419904000 & 602622105599612348006400 \\
43 & &  & &  &  & & 45850824283578118963200 \\
44 & &  & &  &  & & 2734157863261981900800 \\
45 & &  & &  &  & & 127631489644560384000 \\
46 & &  & &  &  & & 4668091942993920000 \\
47 & &  & &  &  & & 134218380312576000 \\
48 & &  & &  &  & & 3012491575296000 \\
49 & &  & &  &  & & 61479419904000 \\
\hline
Total & 130922 & 10256288925 & 467281806581416 & 12027068084311265945 & 170054389801868987652126 & 1289970420801370588662084277 & 5175166233060627523665748739420 \\
\hline
\end{tabular}
}
\caption{The values of $\#\PLR(r,7,7;m)$.}\label{ta:r77}
\end{table}

\begin{table}[h]
\centering
\resizebox{\textwidth}{!}{
\begin{tabular}{rrrrrrrrrrrrrrrr} \hline
\ & \multicolumn{5}{l}{$\#\PLR(r,s,8;m)$}\\ \cline{2-16}
\ & \multicolumn{5}{l}{$r.s.8$}\\ \cline{2-16}
$m$ & 1.1.8 & 1.2.8 & 1.3.8 & 1.4.8 & 1.5.8  & 1.6.8 & 2.2.8 & 2.3.8 & 2.4.8 & 2.5.8 & 2.6.8 & 3.3.8 & 3.4.8 & 3.5.8 & 3.6.8\\ \hline
0 & 1 & 1 & 1 & 1 & 1 & 1 & 1 & 1 & 1 & 1 & 1 & 1 & 1 & 1 & 1\\
1 & 8 & 16 & 24 & 32 & 40 & 48 & 32 & 48 & 64 & 80 & 96 & 72 & 96 & 120 & 144\\
2 & & 56 & 168 & 336 & 560 & 840 & 352 & 888 & 1664 & 2680 & 3936 & 2160 & 3984 & 6360 & 9288\\
3 & & & 336 & 1344 & 3360 & 6720 & 1568 & 8064 & 22848 & 49280 & 90720 & 35328 & 94272 & 197040 & 355776\\
4 & & & & 1680 & 8400 & 25200 & 2408 & 37800 & 180432 & 548240 & 1302840 & 346248 & 1413216 & 3981600 & 9049320\\
5 & & & & & 6720 & 40320 &  & 86688 & 835968 & 3837120 & 12216960 & 2104704 & 14107968 & 55460160 & 161935200\\
6 & & & & & & 20160 & & 76272 & 2212224 & 17025120 & 76258560 & 7925232 & 95977056 & 548866080 & 2107810320\\
7 & & & & & & & & & 3050880 & 47040000 & 317197440 & 17823456 & 447552000 & 3921099840 & 20355148800\\
8 & & & & & & & & & 1681680 & 77053200 & 866199600 & 21748608 & 1417799376 & 20342463120 & 147462199920\\
9 & & & & & & & & & & 67267200 & 1501920000 & 10997952 & 2973054336 & 76474164480 & 805219282560\\
10 & & & & & & & & & & 23782080 & 1555303680 & & 3916301760 & 206198475840 & 3312619813440\\
11 & & & & & & & & & & & 856154880 & & 2911668480 & 390865104000 & 10212300322560\\
12 & & & & & & & & & & & 189564480 & & 925505280 & 503426851200 & 23343747174720\\
13 & & & & & & & & & & & & & & 415516953600 & 38901656989440\\
14 & & & & & & & & & & & & & & 196521292800 & 46071395395200\\
15 & & & & & & & & & & & & & & 40197104640 & 37300347786240\\
16 & & & & & & & & & & & & & & & 19394880744960\\
17 & & & & & & & & & & & & & & & 5775185848320\\
18 & & & & & & & & & & & & & & & 742119920640\\ \hline
Total & 9 & 73 & 529 & 3393 & 19081 & 93289 & 4361 & 209761 & 7985761 & 236605001 & 5376213193 & 60983761 & 12703477825 & 1854072020881 & 186029569786849\\ \hline
\end{tabular}
}
\caption{The values of $\#\PLR(r,s,8;m)$; continued in Table~\ref{ta:rs8a}.}\label{ta:rs8}
\end{table}

\begin{table}[h]
{\scriptsize
\resizebox{\textwidth}{!}{
\begin{tabular}{rrrrrrrr} \hline
\ & \multicolumn{5}{l}{$\#\PLR(r,s,8;m)$}\\ \cline{2-7}
\ & \multicolumn{5}{l}{$r.s.8$}\\ \cline{2-7}
$m$ & 4.4.8 & 4.5.8 & 4.6.8 & 5.5.8 & 5.6.8 & 6.6.8\\ \hline
0 & 1 & 1 & 1 & 1 & 1 & 1\\
1 & 128 & 160 & 192 & 200 & 240 & 288\\
2 & 7296 & 11600 & 16896 & 18400  & 26760 & 38880\\
3 & 245376 & 505920 & 905664 & 1035200  & 1844160 & 3275040\\
4 & 5440032 & 14863920 & 33150960 & 39944000  & 88164000 & 193314600\\
5 & 84155904 & 312224640 & 880629120 & 1123820160  & 3111238080 & 8513683200\\
6 & 938106624 & 4857854400 & 17614343040 & 23931230400 & 84205144800 & 290863660800\\
7 & 7674293760 & 57240046080 & 271706198400 & 395240496000  & 1792941696000 & 7913434233600\\
8 & 46492403328 & 517971847680 & 3284306156880 & 5147427465600  & 30572202805200 & 174666634178400\\
9 & 208994118144 & 3629706339840 & 31445326617600 &  53468641900800 & 422804060918400 & 3170532224025600\\
10 & 693958185984 & 19775273602560 & 240125846929920 & 446402183619840  & 4786206531503040 & 47813256027210240\\
11 & 1682575630848 & 83790652431360 & 1468289383142400 & 3010123202150400  & 44641974866227200 & 603661256780037120\\
12 & 2918423765376 & 275253395880960 & 7199937958106880 & 16435824668659200  & 344653334210505600 & 6417774203497977600\\
13 & 3499852769280 & 696318889996800 & 28293978724945920 & 72712045906752000  & 2208972817416960000 & 57702985271076096000\\
14 & 2737429309440 &  1341720153849600 &  88858361994393600 & 260310345154272000  & 11772041299608844800 & 440141451429993062400\\
15 & 1248707174400 & 1937078037135360 & 221915691372533760 & 751830550246218240  & 52182303997547888640 & 2854234197294902231040\\
16 & 250631700480 & 2044882328832000 & 437480320642485120 & 1743112037427264000  & 192252531989997360000 & 15755740879827344094720\\
17 & & 1520755813785600 & 673828360242831360 & 3220935488443008000  & 587611818336920832000 & 74072163230186875084800\\
18 & & 749652906240000 & 799632015238103040 & 4696696282529664000  & 1485550685585627136000 & 296515269372479241369600\\
19 & & 218552140800000 & 717288915035750400 & 5332415365638144000  & 3093542054560661760000 & 1009795856630569892352000\\
20 & & 28375521914880 & 473551206050119680 & 4627930686056294400  & 5277311702476213478400 & 2921119683107942455372800\\
21 & & & 221248992118210560 & 2991698111646720000  & 7323172199654814720000 & 7162134609634048893542400\\
22 & & & 68717472783482880 & 1386151005947904000 & 8192740332265767936000 & 14840974457028794640384000\\
23 & & & 12643342010449920 &  432666484604928000 & 7305506991725193216000 & 25896289714957972638720000\\
24 & & & 1036744153804800 & 81104713998336000  & 5116406448359066112000 & 37882633011208622775936000\\
25 & & &  &   & 2759923954130172641280 & 46209895877069315283271680\\
26  & & &  &  & 1116269083866463027200 & 46698712018213236924579840\\
27  & &  &  &  & 325482156403465420800 & 38792083589752166760038400\\
28  & &  &  &  & 64264706091590860800 & 26235781041173371579699200\\
29  & &  &  &  & 7655207985266688000 & 14276150397241050415104000\\
30  & &  &  &  & 413733776530636800 & 6157077288600135234355200\\
31  & &  & &  &  & 2063745518966035159449600\\
32  & &  & &  &  & 523353569391869239296000\\
33  & &  & &  &  & 96567369329870143488000\\
34  & &  & &  &  & 12182875723557568512000\\
35 & &   & &  &  & 937008615326102323200\\
36  & &  & &  &  & 33087582858697113600\\ \hline
Total & 13295767306401 & 8920365218163361 & 3753438773423308993 & 25624385022295308521  & 42914661462094545592201 & 271169169298945362007111849\\
\end{tabular}
}}
\caption{The values of $\#\PLR(r,s,8;m)$; continued from Table~\ref{ta:rs8}.  The present authors did not compute $\#\PLR(r,7,8;m)$ nor $\#\PLR(r,8,8;m)$.}\label{ta:rs8a}
\end{table}

\begin{table}[h]
{\tiny
\begin{center}
\begin{tabular}{rrrrrrr} \hline
\ & \multicolumn{5}{l}{$\#\mathrm{Isom}(n;m)$}\\ \cline{2-7}
\ & \multicolumn{6}{l}{$n$}\\ \cline{2-7}
$m$ & 1 & 2 & 3 & 4& 5 & 6\\ \hline
0 & 1 & 1 & 1 & 1 & 1 & 1\\
1 & 1 &  4 &  5 &  5 &  5& 5\\
2 & \ & 10 & 50 & 84 & 93& 94\\
3 & \ & 4  & 221 & 1120 & 2112& 2548\\
4 & \ & 1  & 525  & 10128 & 43955& 85234\\
5 & \ & \ & 651  & 60092  & 674957& 2508483\\
6 & \ & \ & 415 & 239302 & 7679384& 59110661\\
7 & \ & \ & 136 & 639098 & 65404265 & 1103309385\\
8 & \ & \ & 20 & 1148454 & 422142208&  16466869051\\
9 & \ & \ & 5 & 1374447 & 2080853035 & 198621450446\\
10 & \ & \ & \ & 1082019 & 7867483199 & 1953036511736\\
11 & \ & \ & \ & 548440 & 22843744418 & 15756857221135\\
12 & \ & \ & \ & 176137 & 50867669444 & 104784604156741\\
13 & \ & \ & \ & 35473 & 86544642569 & 576125696499417\\
14 & \ & \ & \ & 4696 & 111836743580 & 2623564948795633\\
15 & \ & \ & \ & 403 & 108882205792 & 9901507463165937\\
16 & \ & \ & \ & 35 & 79051125332 & 30959687376379661\\
17 & \ & \ & \ & \ & 42275685836 & 80100291981771263\\
18 & \ & \ & \ & \ & 16420711804  & 171118574787473668\\
19 & \ & \ & \ & \ & 4563456676 &  300957676311237853\\
20 & \ & \ & \ & \ & 894429087 &  434125855232450974\\
21 & \ & \ & \ & \ & 122238972 &  511227919780309083\\
22 & \ & \ & \ & \ & 11569016 &  488771341028032846\\
23 & \ & \ & \ & \ & 759296 &  376957644290919036\\
24 & \ & \ & \ & \ & 33736& 232788472371575258\\
25 & \ & \ & \ & \ & 1411&  114149339445885218\\
26 & \ & \ & \ & \ &  \ &  44033009520708974\\
27 & \ & \ & \ & \ &  \ &  13227534274721732\\
28 & \ & \ & \ & \ &  \ &  3061826358557444\\
29 & \ & \ & \ & \ &  \ &  540473537486248\\
30 & \ & \ & \ & \ &  \ &  72090555296085\\
31 & \ & \ & \ & \ &  \ &  7217657260917\\
32 & \ & \ & \ & \ &  \ &  540810639064\\
33 & \ & \ & \ & \ &  \ &  30364554576\\
34 & \ & \ & \ & \ &  \ &  1285684592\\
35 & \ & \ & \ & \ &  \ &  40649375\\
36 & \ & \ & \ & \ &  \ &  1130531\\ \hline
Total & 2 & 20 & 2029 & 5319934 & 534759300183 & 2815323435872410905\\ \hline
\end{tabular}
\end{center}
}
\caption{Number of isomorphism classes in $\PLS(n;m)$ for $n\leq 6$, according to weight $m$.}\label{tableIsom}
\end{table}

\begin{table}[h]
{
\resizebox{\textwidth}{!}{
\begin{tabular}{rrrrrrrrrrrrrrrr}  \hline
\ & \multicolumn{5}{l}{$\#\mathrm{Isot}(2,s,n;m)$}\\ \cline{2-16}
\ & \multicolumn{5}{l}{$2.s.n$}\\ \cline{2-16}
$m$ & 2.2.2 & 2.2.3 & 2.2.4 & 2.2.5 & 2.2.6 & 2.3.3 & 2.3.4 & 2.3.5 & 2.3.6 & 2.4.4 & 2.4.5 & 2.4.6 & 2.5.5 & 2.5.6 & 2.6.6\\ \hline
0 & 1 & 1 & 1 & 1 & 1 & 1 & 1 & 1 & 1 & 1 & 1 & 1 & 1 & 1 & 1\\
1 & 1 & 1 & 1 & 1 & 1 & 1 & 1 & 1 & 1 & 1 & 1 & 1 & 1 & 1 & 1\\
2 & 4 & 4 & 4 & 4 & 4 & 4 & 4 & 4 & 4 & 4 & 4 & 4 & 4 & 4 & 4\\
3 & 1 & 2 & 2 & 2 & 2 & 5 & 5 & 5 & 5 & 5 & 5 & 5 & 5 & 5 & 5\\
4 & 1 & 2 & 3 & 3 & 3 & 6 & 9 & 9 & 9 & 15 & 15 & 15 & 15 & 15 & 15\\
5 &   &   &   &   &   & 2 & 4 & 5 & 5 & 10 & 13 & 13 & 19 & 19 & 19\\
6 &   &   &   &   &   & 1 & 3 & 4 & 5 & 13 & 19 & 22 & 31 & 37 & 47\\
7 &   &   &   &   &   &   &   &   &   &  3 & 7 & 9 & 22 & 30 & 45\\
8 &   &   &   &   &   &   &   &   &   &  2 & 5 & 8 & 20 & 35 & 69\\
9 &   &   &   &   &   &   &   &   &   &    &   &   & 5   & 12 & 40\\
10 &  &  &  &  &  &  &  &  &  &  &  &  &              2  & 7 & 35\\
11 &  &  &  &  &  &  &  &  &  &  &  &  & &  & 7\\
12 &  &  &  &  &  &  &  &  &  &  &  &  & &  & 4\\ \hline
Total & 8 & 10 & 11 & 11 & 11 & 20 & 27 & 29 & 30 & 54 & 70 & 78 & 125 & 166 & 292\\ \hline
\end{tabular}
}}
\caption{Number of isotopism classes in $\PLR(2,s,n;m)$ for $2\leq s\leq n\leq 6$, according to weight $m$.}\label{tableIsot}
\end{table}

\begin{table}[h]
\small
\centering
\begin{tabular}{rrrrrrrrrrr} \hline
\ & \multicolumn{5}{l}{$\#\mathrm{Isot}(3,s,n;m)$}\\ \cline{2-11}
\ & \multicolumn{5}{l}{$3.s.n$}\\ \cline{2-11}
$m$ & 3.3.3 & 3.3.4 & 3.3.5 & 3.3.6 & 3.4.4 & 3.4.5 & 3.4.6 & 3.5.5 & 3.5.6 & 3.6.6\\ \hline
0 & 1 & 1 & 1 & 1 & 1 & 1 & 1 & 1 & 1 & 1\\
1 & 1 & 1 & 1 & 1 & 1 & 1 & 1 & 1 & 1 & 1\\
2 & 4 & 4 & 4 & 4 & 4 & 4 & 4 & 4 & 4 & 4\\
3 & 11 & 11 & 11 & 11 & 11 & 11 & 11 & 11 & 11 & 11\\
4 & 18 & 25 & 25 & 25 & 36 & 36 & 36 & 36 & 36 & 36\\
5 & 23 & 42 & 49 & 49 & 78 & 91 & 91 & 109 & 109 & 109\\
6 & 15 & 52 & 71 & 77 & 174 & 237 & 254 & 330 & 356 & 389\\
7 & 6 & 33 & 70 & 82 & 215 & 430 & 502 & 858 & 1012 & 1212\\
8 & 1 & 11 & 34 & 52 & 192 & 585 & 855 & 1770 & 2568 & 3782\\
9 & 1 & 4 & 13 & 23 & 91 & 491 & 962 & 2683 & 5168 &  10001\\
10 &  &  &  &  & 30 & 257 & 740 & 2689 & 7706 & 21857\\
11 &  &  &  &  & 4 & 71 & 298 & 1794 & 7988 &  35822\\
12 &  &  &  &  & 2 & 12 & 70 & 709 & 5446 & 42768\\
13 &  &  &  &  &  &  &  & 177 & 2301 & 34916\\
14 &  &  &  &  &  &  &  & 19 & 530 & 19078\\
15 &  &  &  &  &  &  &  & 3 & 62 & 6441\\
16 &  &  &  &  &  &  &  &  &  & 1315\\
17 &  &  &  &  &  &  &  &  &  & 133\\
18 &  &  &  &  &  &  &  &  &  &  16\\ \hline
Total & 81 & 184 & 279 & 325 & 839 & 2227 & 3825 & 11194 & 33299 & 177892\\ \hline
\end{tabular}
\caption{Number of isotopism classes in $\PLR(3,s,n;m)$ for $3\leq s\leq n\leq 6$, according to weight $m$.}\label{tableIsota}
\end{table}

\begin{table}[h]
\resizebox{\textwidth}{!}{
\begin{tabular}{rrrrrrrrrrr} \hline
\ & \multicolumn{5}{l}{$\#\mathrm{Isot}(r,s,n;m)$}\\ \cline{2-11}
\ & \multicolumn{5}{l}{$r.s.n$}\\ \cline{2-11}
$m$ & 4.4.4 & 4.4.5 & 4.4.6 & 4.5.5 & 4.5.6 & 4.6.6 & 5.5.5 & 5.5.6 & 5.6.6 & 6.6.6\\ \hline
0 & 1 & 1 & 1 & 1 & 1 & 1 & 1 & 1 & 1 & 1\\
1 & 1 & 1 & 1 &  1 & 1 & 1 & 1 & 1 & 1 & 1\\
2 & 4 & 4 & 4 & 4 & 4 & 4 & 4 & 4 & 4 & 4\\
3 & 11 & 11 & 11 & 11 & 11 & 11 & 11 & 11 & 11 & 11\\
4 & 52 & 52 & 52 & 52 & 52 & 52 & 52 & 52 & 52 & 52\\
5 & 139 & 160 & 160 & 187  & 187 & 187 & 221 & 221 & 221 & 221\\
6 & 507 & 668 & 707 & 882 & 935 & 997 & 1158 & 1227 & 1306 & 1396\\
7 & 1161 & 2103 & 2395 & 3713  & 4223 & 4826 & 6310  & 7127 & 8064  & 9130\\
8 & 2136 & 5678 & 7754 & 14266 & 19080 & 25524 & 33293 & 43322 & 56110 & 72145\\
9 & 2429 & 10739 & 19067 & 42940 & 72764 & 121508 & 150964 & 241958 & 380083 &  583339\\
10 & 2004 & 14881 & 36957 & 99301 & 230072 & 515040 & 554285 & 1174047 & 2384388 & 4627607\\
11 & 975 & 13865 & 50826 & 168900 & 565202 & 1797295 & 1594532 & 4699600 & 12974453 &  33362634\\
12 & 364 & 8970 & 50244 & 210285 & 1064946 & 5054807 & 3539461 & 15159299 & 59361654 & 210409407\\
13 & 72 & 3664 & 32727 & 187214 & 1498530 & 11135187 & 6017824 & 38833501 & 223569607 & 1129335392\\
14 & 18 & 995 & 13973 & 117985 & 1557518 & 19016101 & 7772366 & 78368607 & 686354327 & 5091624997\\
15 & 2 & 141 & 3268 &51094  & 1166309 & 24794117 & 7568187 & 123670028 & 1706058231 & 19140028219\\
16 & 2 & 22 & 411 & 14960 & 616603 & 24415585 & 5493206 & 151457082 & 3417379856 & 59761963636\\
17 &  &  &  & 2814 & 220158 & 17834146 & 2939617 & 142614087 & 5488132262 & 154544375137\\
18 &  &  &  & 332 & 50723 & 9492300 & 1141472 & 102078688 & 7025903964 & 330108625102\\
19 &  &  &  & 24  & 6591 & 3575605 & 317980 & 54746803 & 7119415871 & 580559388329\\
20 &  &  &  & 3 & 428 & 926317 & 62319 & 21601198 & 5662138638 & 837440466326\\
21 &  &  &  &  &  & 156463 & 8676 & 6121385 & 3498117999 & 986167409118\\
22 &  &  &  &  &  & 16759 & 823 & 1203460 & 1658503251 &  942850011453\\
23 &  &  &  &  &  & 960 & 69 & 155952 & 594594494 & 727157075193\\
24 &  &  &  &  &  & 56 & 6 & 12023 & 158425032 & 449054224783\\
25 &  &  &  &  &  &  & 2 & 486 & 30703736 & 220195944263\\
26 &  &  &  &  &  &  &  &  & 4220807 & 84941236104\\
27 &  &  &  &  &  &  &  &  & 396518 & 25516234965\\
28 &  &  &  &  &  &  &  &  & 24531 & 5906586539\\
29 &  &  &  &  &  &  &  &  & 886 & 1042616896\\
30 &  &  &  &  &  &  &  &  & 40 &  139114631\\
31 &  &  &  &  &  &  &  &  &  & 13928529\\
32 &  &  &  &  &  &  &  &  &  & 1048656\\
33 &  &  &  &  &  &  &  &  &  & 59130\\
34 &  &  &  &  &  &  &  &  &  &  2846\\
35 &  &  &  &  &  &  &  &  &  & 109\\
36 &  &  &  &  &  &  &  &  &  & 22\\ \hline
Total & 9878 & 61955 & 218558 & 914969 & 7074338 & 118883849 & 37202840  & 742190170 & 37349106398 & 5431010366323 \\ \hline
\end{tabular}}
\caption{Number of isotopism classes in $\PLR(r,s,n;m)$ for $4\leq r\leq s\leq n\leq 6$, according to weight $m$.}\label{tableIsot2}
\end{table}

\begin{table}[h]
\resizebox{\textwidth}{!}{
\begin{tabular}{rrrrrrrrrrrrrrrrr}  \hline
\ & \multicolumn{5}{l}{$\#\mathrm{MC}(r,s,n;m)$}\\ \cline{2-17}
\ & \multicolumn{5}{l}{$r.s.n$}\\ \cline{2-17}
$m$ & 2.2.2 & 2.2.3 & 2.2.4 & 2.2.5 & 2.2.6 & 2.3.3 & 2.4.4 & 2.5.5 & 2.6.6 & 3.3.3 & 3.3.4 & 3.3.5 & 3.3.6 & 3.4.4 & 3.5.5 & 3.6.6\\ \hline
0 & 1 & 1 & 1 & 1 & 1 & 1 & 1 & 1 & 1 & 1 & 1 & 1 & 1 & 1 & 1 & 1\\
1 & 1 & 1 & 1 & 1 & 1 & 1 & 1 & 1 & 1 & 1 & 1 & 1 & 1 & 1 & 1 & 1\\
2 & 2 & 3 & 3 & 3 & 3 & 3 & 3 & 3 & 3 & 2 & 3 & 3 & 3 & 3 & 3 & 3\\
3 & 1 & 2 & 2 & 2 & 2 & 4 & 4 & 4 & 4 & 5 & 8 & 8 & 8 & 8 & 8 & 8\\
4 & 1 & 2 & 3 & 3 & 3 & 5 & 11 & 11 & 11 & 8 & 18 & 18 & 18 & 24 & 24 & 24\\
5 &  &  &  &  &  & 2 & 8 & 14 & 14 & 9 & 28 & 33 & 33 & 49 & 67 & 67\\
6 &  &  &  &  &  & 1 & 10 & 22 & 32 & 7 & 34 & 46 & 50 & 104 & 191 & 224\\
7 &  &  &  &  &  &  & 3 & 17 & 32 & 4 & 23 & 46 & 54 & 128 & 477 & 667\\
8 &  &  &  &  &  &  & 2 & 16 & 48 & 1 & 9 & 24 & 36 & 116 & 963 & 2018\\
9 &  &  &  &  &  &  &  & 5 & 31 & 1 & 4 & 12 & 20 & 59 & 1444 & 5233\\
10 &  &  &  &  &  &  &  & 2 & 27 &  &  &  &  & 22 & 1452 & 11309\\
11 &  &  &  &  &  &  &  &  & 7 &  &  &  &  & 4 & 986 & 18435\\
12 &  &  &  &  &  &  &  &  & 4 &  &  &  &  & 2 & 411 & 21995\\
13 &  &  &  &  &  &  &  &  &  &  &  &  &  &  & 115 & 18028\\
14 &  &  &  &  &  &  &  &  &  &  &  &  &  &  & 16 & 9969\\
15 &  &  &  &  &  &  &  &  &  &  &  &  &  &  & 3 & 3468\\
16 &  &  &  &  &  &  &  &  &  &  &  &  &  &  &  & 765\\
17 &  &  &  &  &  &  &  &  &  &  &  &  &  &  &  & 94\\
18 &  &  &  &  &  &  &  &  &  &  &  &  &  &  &  & 14\\ \hline
Total  & 6 & 9 & 10 & 10 & 10 & 17 & 43 & 96 & 215 & 39 & 129 & 192 & 224 & 521 & 6162 & 92323\\ \hline
\end{tabular}
}
\caption{Number of main classes in $\PLR(r,s,n;m)$ for $2\leq r\leq s\leq n\leq 6$ with $r \leq 3$, according to weight $m$.  We omit the cases when $r$, $s$, and $n$ are pairwise distinct.}\label{tableMainClasses}
\end{table}

\begin{table}[h]
\begin{center}{\tiny
\begin{tabular}{rrrrrrrrrr}  \hline
\ & \multicolumn{5}{l}{$\#\mathrm{MC}(r,s,n;m)$}\\ \cline{2-10}
\ & \multicolumn{5}{l}{$r.s.n$}\\ \cline{2-10}
$m$ & 4.4.4 & 4.4.5 & 4.4.6 & 4.5.5 & 4.6.6 & 5.5.5 & 5.5.6 & 5.6.6 & 6.6.6\\ \hline
0 & 1 & 1 & 1 & 1 & 1 & 1 & 1 & 1 & 1\\
1 & 1 & 1 & 1 & 1 & 1 & 1 & 1 & 1 & 1\\
2 & 2 & 3 & 3 & 3 & 3 & 2 & 3  & 3 & 2\\
3 & 5 & 8 & 8 & 8 & 8 & 5 & 8 & 8 & 5\\
4 & 18 & 34 & 34 & 34 & 34 &  18 & 34 & 34 & 18\\
5 & 39 & 97 & 97 & 111 & 111 &  59 & 131 & 131 & 59\\
6 & 121 & 376 & 399 & 489 & 551 &  256 & 677 & 717 & 306\\
7 & 253 & 1135 & 1293 & 1969 & 2548 &  1224 & 3748 & 4225 & 1747\\
8 & 442 & 2987 & 4070 & 7392 & 13142 &  5997 & 22209 & 28670 & 12799\\
9 & 495 & 5579 & 9847 & 21936 & 61657 &  26188 & 122390 & 191855 & 99715\\
10 & 420 & 7694 & 18939 & 50420 & 259637 &  94479 & 590423 & 1197283 & 779295\\
11 & 218 & 7170 & 25943 & 85477 & 902847 &  269456 & 2356900 & 6500092 & 5583650\\
12 & 96 & 4686 & 25682 & 106351 & 2535116 &  595649 & 7593131 & 29710547 & 35131875\\
13 & 25 & 1944 & 16768 & 94754 & 5579487 &  1010706 & 19438925 & 111845936 & 188377998\\
14 & 8 & 561 & 7283 & 59910 & 9524578 &  1304319 & 39216773 & 343290367 & 848955581\\
15 & 2 & 88 & 1742 & 26146 & 12416456 &  1270356 & 61876720 & 853215848 & 3190714878\\
16 & 2 & 19 & 262 & 7790 & 12227832 &  923128 & 75774954 & 1708965453 & 9961645532\\
17 &  &  &  & 1533 & 8934309 &  495565 & 71353021 & 2744426741 & 25759586139\\
18 &  &  &  & 200 & 4758913 &  193531 & 51076956 & 3513371841 & 55021427957\\
19 &  &  &  & 19 & 1795458 &  54746 & 27401728 & 3560137618 & 96764408110\\
20 &  &  &  & 3 & 467000 &  11052 & 10816787 & 2831458432 & 139578978645\\
21 &  &  &  &  & 79692 &  1693 & 3069771  & 1749363542 & 164367335977\\
22 &  &  &  &  & 8815 &  192 & 604997 & 829461470 & 157147744329\\
23 &  &  &  &  & 566 &  26 & 79301 & 297418767 & 121198141862\\
24 &  &  &  &  & 44 &  4 & 6249 & 79274342 & 74846573994\\
25 &  &  &  &  & &  2 & 312 & 15377110 & 36702176578\\
26 &  &  &  &  & &   &  & 2119455 & 14158650257\\
27 &  &  &  &  & &   &  & 200664 & 4253618044\\
28 &  &  &  &  & &   &  & 12830 & 984869538\\
29 &  &  &  &  & &   &  & 527 & 173933415\\
30 &  &  &  &  & &   &  & 33 & 23245431\\
31 &  &  &  &  & &   &  &  & 2336988\\
32 &  &  &  &  &  &  &  &  & 179057\\
33 &  &  &  &  & &   &  &  & 10603\\
34 &  &  &  &  & &   &  &  & 640\\
35 &  &  &  &  & &   &  &  & 40\\
36 &  &  &  &  & &   &  &  & 12\\ \hline
Total  & 2148 & 32383 & 112372 & 464547 & 59568806 & 6239377 & 371406150 & 18677574543 & 905214521078\\ \hline
\end{tabular}
}\end{center}
\caption{Number of main classes in $\PLR(r,s,n;m)$ for $4\leq r\leq s\leq n\leq 6$, according to weight $m$.  We omit the cases when $r$, $s$, and $n$ are pairwise distinct.}\label{tableMainClasses1}
\end{table}

\begin{table}
\centering {\small
\begin{tabular}{rrr} \hline
$m$ & $\#\mathrm{Isot}(r,s,n;m)$ & $\#\mathrm{MC}(r,s,n;m)$ \\
\hline
0 & 1 & 1 \\
1 & 1 & 1 \\
2 & 4 & 2 \\
3 & 11 & 5 \\
4 & 52 & 18 \\
5 & 221 & 59 \\
6 & 1396 & 306 \\
7 & 9719 & 1861 \\
8 & 85145 & 15097 \\
9 & 860347 & 146893 \\
10 & 10071270 & 1693416 \\
11 & 133048009 & 22239872 \\
12 & ? & 327670703 \\
\hline
\end{tabular}}
\caption{Number of isotopism classes and main classes in $\PLR(r,s,n;m)$ when $r \geq m$, $s \geq m$, and $n \geq m$, according to weight $m$.  The second column is Sloane's \protect\url{oeis.org/A286317}.}
\label{ta:unbounded}
\end{table}

\end{document}